\newtheorem{theorem}{Theorem}[section]
\newtheorem{lemma}[theorem]{Lemma}
\newtheorem{corollary}[theorem]{Corollary}
\newtheorem{proposition}[theorem]{Proposition}
\newtheorem{question}[theorem]{Question}
\theoremstyle{definition}
\theoremstyle{remark}
\newtheorem{remark}[theorem]{Remark}
\newtheorem*{claim}{Claim}
\newtheorem*{sub-claim}{sub-claim}
\newcommand{\N}{\mathbb{N}}
\newcommand{\F}{\mathcal{F}}
\newcommand{\G}{\mathcal{G}}
\newcommand{\explicitSet}[1]{\left\lbrace #1 \right\rbrace}
\newcommand{\brackets}[1]{\left\langle #1 \right\rangle}
\newcommand{\set}[2]{\explicitSet{#1 \colon #2}}
\newcommand{\seq}[2]{\brackets{#1 \colon #2}}
\newcommand{\0}{\emptyset}
\renewcommand{\a}{\alpha}
\renewcommand{\b}{\beta}
\renewcommand{\k}{\kappa}
\newcommand{\s}{\sigma}
\newcommand{\w}{\omega}
\newcommand{\sub}{\subseteq}
\newcommand{\rest}{\!\restriction\!}
\newcommand{\closure}[1]{\overline{#1}}
\newcommand{\B}{\mathcal{B}}
\newcommand{\plim}{p\mbox{-}\!\lim_{n \in \w}}
\newcommand{\card}[1]{\left\lvert #1 \right\rvert}
\newcommand{\p}{\mathbb{P}}
\newcommand{\pwmf}{\mathcal{P}(\w)/\mathrm{fin}}
\newcommand{\continuum}{\mathfrak{c}}
\newcommand{\pseudo}{\mathfrak{p}}
\newcommand{\tower}{\mathfrak{t}}
\newcommand{\dom}{\mathfrak d}
\renewcommand{\S}{\mathcal S}
\renewcommand{\]}{]\!]}
\newcommand{\thick}{\Theta}
\begin{document}

\title{$G_\delta$ semifilters and $\w^*$}
\author{Will Brian}
\address {
William R. Brian\\
Department of Mathematics\\
Tulane University\\
6823 St. Charles Ave.\\
New Orleans, LA 70118}
\email{wbrian.math@gmail.com}
\author{Jonathan Verner}
\address {
Jonathan L. Verner\\
Department of Logic, Faculty of Arts \\
Charles University \\
Palachovo n\'am. 2 \\
116 38 Praha 1 \\
Czech Republic}
\email{jonathan.verner@ff.cuni.cz}
\subjclass[2010]{Primary: 03E17, 54D35. Secondary: 22A15, 03E35, 06A07}
\keywords{semifilter, $\w^*$, (weak) $P$-filter, (weak) $P$-set, minimal ideal, minimal/maximal idempotent}

\thanks{Work of the second author was partially supported by the joint FWF-GA\v{C}R grant no. I 1921-N25, The continuum, forcing, and large cardinals and a 
postdoctoral fellowship at the Faculty of Arts, Charles University.}

\maketitle

\begin{abstract}
The ultrafilters on the partial order $([\w]^{\w},\sub^*)$ are the free ultrafilters on $\w$, which constitute the space $\w^*$, the Stone-\v{C}ech remainder of $\w$. If $U$ is an upperset of this partial order (i.e., a \emph{semifilter}), then the ultrafilters on $U$ correspond to closed subsets of $\w^*$ via Stone duality.

If, in addition, $U$ is sufficiently ``simple'' (more precisely, $G_\delta$ as a subset of $2^\w$), we show that $U$ is similar to $[\w]^{\w}$ in several ways. First, $\pseudo_U = \tower_U = \pseudo$ (this extends a result of Malliaris and Shelah). Second, if $\dom = \continuum$ then there are ultrafilters on $U$ that are also $P$-filters (this extends a result of Ketonen). Third, there are ultrafilters on $U$ that are weak $P$-filters (this extends a result of Kunen).

By choosing appropriate $U$, these similarity theorems find applications in dynamics, algebra, and combinatorics. Most notably, we will prove that $(\w^*,+)$ contains minimal left ideals that are also weak $P$-sets.
\end{abstract}

\section{Introduction}

The main theme of this paper is that there is a class of ``simple'' semifilters whose members all look essentially like $[\w]^{\w}$, the set of infinite subsets of $\w$. We will prove several theorems along these lines, and also find applications of these theorems.

Recall that any semifilter is naturally identified with a subset of $2^\w$ via characteristic functions. The ``simple'' class of semifilters we are interested in are those that are $G_\delta$ in $2^\w$ (where $2^\w$ has its standard topology as the Cantor set). Some of our proofs will also work for co-meager semifilters. Clearly $[\w]^{\w}$ is in this class because it is co-countable. We will show that many of its properties, including some that correspond to interesting properties of $\w^*$, can be proved for any other semifilter in this class as well.

For example, in Section~\ref{sec:p&t}, we show that $G_\delta$ semifilters all satisfy the Malliaris-Shelah equality $\pseudo = \tower$ (see \cite{M&S}). That is, defining $\pseudo_\G$ and $\tower_\G$ appropriately, we show that for a $G_\delta$ semifilter $\G$, $\pseudo_\G = \tower_\G = \pseudo$.

In Section~\ref{sec:d=c}, we show that if $\dom = \continuum$ then every $G_\delta$ semifilter admits an ultrafilter that is also a $P$-filter. This generalizes a result of Kentonen from \cite{Ket}, which says the same thing for $[\w]^{\w}$.

In Section~\ref{sec:weakpsets}, we show that every $G_\delta$ semifilter admits an ultrafilter that is a weak $P$-filter. This generalizes a result of Kunen from \cite{Kun}, which says the same thing for $[\w]^{\w}$.

In Section~\ref{sec:applications} we have collected a few applications of these results. Among other things, we show that $(\w^*,+)$ contains a minimal left ideal that is also a weak $P$-set. Any such ideal is prime, and the idempotents it contains are both minimal and left-maximal. This strengthens a result of Zelenyuk from \cite{YZ2}.

\section{Preliminaries}\label{sec:prelims}

A \emph{semifilter} on $\w$ is a subset $\S$ of $\mathcal P(\w)$ such that $\0 \neq \S \neq \mathcal P (\w)$ and $\S$ is closed upwards in $\sub^*$ (as usual, $A \sub^* B$ means $A \setminus B$ is finite). We think of semifilters as partial orders, naturally ordered by $\sub^*$. The largest possible semifilter is $[\w]^{\w}$, the set of all infinite subsets of $\w$.

A partial order is \emph{antisymmetric} if $a \leq b$ and $b \leq a$ implies $a=b$; some authors even include this in the definition of a partial order. We note that our partial orders do not enjoy this property. However, each one has an \emph{antisymmetric quotient}, namely the set of equivalence classes of the form $[X] = \set{Y \sub \w}{X \sub^* Y \sub^* X}$. For example, the antisymmetric quotient of $[\w]^{\w}$ is the familiar order $\pwmf$ (without the bottom element). In what follows, we have no need (and no desire) to work with equivalence classes, and will not need to use the antisymmetry axiom anywhere. Therefore we choose to work with subsets of $\w$ rather than equivalence classes thereof. It is worth pointing out, though, that all of our proofs and constructions ``factor through'' the antisymmetric quotient, and can be interpreted as results about $\pwmf$ and its uppersets.

If $\S$ is a semifilter, then a \emph{filter} on $\S$ is a filter on the partial order $(\S,\sub^*)$. Specifically, $\F \sub \S$ is a filter on $\S$ whenever
\begin{itemize}
\item $\F \neq \0$.
\item $A \in \F$ and $A \sub^* B$ implies $B \in \F$.
\item $A,B \in \F$ implies $A \cap B \in \F$.
\end{itemize}
An \emph{ultrafilter} on $\S$ is a maximal filter on $\S$. $\B \sub \S$ is a \emph{filter base} on $\S$ if $\set{A \sub \N}{B \sub^*A \text{ for some } B \in \B}$ is a filter on $\S$. A set is \emph{centered} in $\S$ if it is contained in some filter base.

The collection of all ultrafilters on $[\w]^{\w}$ is denoted $\w^*$. This set has a natural topology as the Stone-\v{C}ech remainder of $\w$, with basic open sets of the form $A^* = \set{\F \in \w^*}{A \in \F}$. Every filter $\F$ on $[\w]^{\w}$ corresponds to a closed subset of $\w^*$, namely $\hat \F = \bigcap_{A \in \F}A^*$. $\hat \F$ is called the \emph{Stone dual} of $\F$. For more on Stone duality and the topology of $\w^*$, we refer the reader to \cite{JvM}.

If $\F$ is a filter on some semifilter $\S$, then $\F$ is also a filter on $[\w]^{\w}$, although an ultrafilter on $\S$ may not be an ultrafilter on $[\w]^{\w}$. Thus the (ultra)filters on a semifilter $\S$ correspond to closed subsets of $\w^*$. For certain choices of $\S$, these closed sets may have interesting algebraic/dynamical/combinatorial properties, and for certain choices of the ultrafilter they may also have interesting topological properties. The interplay between these two choices will give rise to our applications in Section~\ref{sec:applications}.

A subset $X$ of $\w^*$ is a $P$\emph{-set} if, whenever $\seq{U_n}{n < \w}$ is a sequence of open sets each of which contains $X$, $X$ is in the interior of $\bigcap_{n < \w}U_n$. $X$ is a \emph{weak $P$-set} if the closure of each countable $D \sub \w^* \setminus X$ is disjoint from $X$. $\F$ is a \emph{(weak) $P$-filter} iff $\hat \F$ is a (weak) $P$-set.

The basic open neighborhoods of $2^\w$ are of the form
$$\[A \rest F\] = \set{X \in 2^\w}{X \cap F = A \cap F}$$
for $A \sub \w$ and finite $F \sub \w$. If $s \sub [0,n]$, we will write $\[s\]$ for $\[s \rest [0,n]\]$.

We mention here a special semifilter that will appear in several places throughout this paper. A set $A \sub \w$ is \emph{thick} if $A$ contains arbitrarily long intervals, and we let $\thick$ denote the semifilter of thick sets. The ultrafilters on $\thick$ correspond (via Stone duality) precisely to the minimal left ideals of $(\w^*,+)$ (see Lemma 3.2 in \cite{Brn}). It was this observation that first motivated the study of ultrafilters on $\thick$, and this in turn motivated our work here.

We end this section by mentioning some results on the descriptive complexity of
semifilters. Recall that a set has the \emph{Baire property} if it differs from
an open set by a meager set. All Borel sets as well as analytic and co-analytic sets have the
Baire property. The following proposition (stated for semifilters in \cite{BaZd})
shows that definable semifilters are either very small or very large:

\begin{proposition}\label{prop:bairemeager}
If a semifilter has the Baire property then it is either meager or co-meager.
\end{proposition}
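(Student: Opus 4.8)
The plan is to prove a topological zero-one law, exploiting the fact that the defining feature of a semifilter is its invariance under finite modifications. Indeed, if $A \in \S$ and $A \triangle B$ is finite, then $A \sub^* B$, so upward closure in $\sub^*$ gives $B \in \S$; hence $A \in \S \iff B \in \S$ whenever $A \triangle B$ is finite. Identifying $\S$ with a subset of $2^\w$, this says exactly that $\S$ is invariant under the group of homeomorphisms obtained by flipping finitely many coordinates. The proposition thus reduces to the classical statement that any set with the Baire property which is invariant under this group is meager or co-meager, and I would prove that directly by a transport-of-category argument.

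First I would assume $\S$ has the Baire property and is \emph{not} meager, and show it must be co-meager. By the Baire property, write $\S = O \triangle M$ with $O$ open and $M$ meager. Since $\S$ is not meager we have $O \neq \0$, so $O$ contains some basic clopen set $\[s\]$ with $s \sub [0,n]$. Because intersection distributes over symmetric difference, $\[s\] \setminus \S = \[s\] \cap M$ is meager; that is, $\S$ is co-meager within $\[s\]$. The task is then to spread this co-meagerness to all of $2^\w$.

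For each $t \sub [0,n]$ put $e = s \triangle t$ and consider the involution $h_e \colon 2^\w \to 2^\w$ defined by $h_e(X) = X \triangle e$. This is a homeomorphism, and it carries $\[s\]$ onto $\[t\]$, since $(X \triangle e) \cap [0,n] = (X \cap [0,n]) \triangle e = t$ whenever $X \cap [0,n] = s$. Because $e$ is finite, the invariance noted above gives $h_e(\S) = \S$. Homeomorphisms preserve meagerness, so $\[t\] \setminus \S = h_e\big(\[s\] \setminus \S\big)$ is meager for every $t \sub [0,n]$. The finitely many clopen sets $\[t\]$, as $t$ ranges over the subsets of $[0,n]$, partition $2^\w$, so $2^\w \setminus \S$ is a finite union of meager sets and is therefore meager; hence $\S$ is co-meager, as required.

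The argument is short, and everything after the first step is a routine application of the fact that flipping finitely many coordinates is a category-preserving homeomorphism. The one point that genuinely matters is the reduction itself: I must use that upward $\sub^*$-closure yields invariance under arbitrary finite symmetric differences (not merely under enlargements of a set), since it is precisely this two-sided invariance that allows $h_e$ to map $\S$ onto itself. Once that is in hand, the proposition follows with no further obstacle.
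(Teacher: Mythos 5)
Your proof is correct, and there is in fact nothing in the paper to compare it against: the paper states this proposition without proof, quoting it from \cite{BaZd}. Your argument is the standard topological zero-one law proof (it is essentially the argument behind Kechris, Theorem 8.46, and behind Talagrand's original treatment of filters), and every step checks out. The reduction is sound: if $A \triangle B$ is finite then both $A \sub^* B$ and $B \sub^* A$, so $\sub^*$-upward closure yields the two-sided invariance $h_e(\S) = \S$ for every finite $e$ --- you are right to flag this as the one point where the semifilter hypothesis genuinely enters, since upward closure under $\sub$ alone would not suffice. The localization step is a correct computation: from $\S = O \triangle M$ with $\[s\] \sub O$ one gets $\[s\] \cap \S = \[s\] \setminus M$, hence $\[s\] \setminus \S = \[s\] \cap M$ is meager, and your observation that non-meagerness of $\S$ forces $O \neq \0$ is exactly where the Baire property is used. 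Finally, the involutions $h_e$ with $e = s \triangle t$ are category-preserving homeomorphisms carrying $\[s\]$ onto $\[t\]$, so $2^\w \setminus \S$ is covered by the finitely many meager sets $\[t\] \setminus \S$ for $t \sub [0,n]$, and the conclusion follows. One further remark: your proof uses nothing about $\S$ beyond invariance under finite modifications, so it actually establishes the general zero-one law for all finite-modification-invariant subsets of $2^\w$ with the Baire property, of which the proposition (and the corresponding fact for filters and ideals) is a special case.
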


Meager (or co-meager) filters have a very convenient characterization 
due to Talagrand and, independently, Jalaili-Naini (see \cite{Tal}, \cite{J-N}).
It was noticed in \cite{BaZd} that it applies to semifilters as well.
Given two semifilters \(\mathcal S,\mathcal G\) we say that 
\(\mathcal S\) is \emph{Rudin-Blass above} \(\mathcal G\) (\(\mathcal S\geq_{RB}\mathcal G\)) if there is a finite-to-one function \(f:\omega\to\omega\)
such that \(A\in\mathcal G\) if and only if \(f^{-1}[A]\in\mathcal S\) (this is the standard Rudin-Blass ordering extended to semifilters).

\begin{proposition}\label{prop:talagrand}
A semifilter is co-meager iff it is Rudin-Blass above \([\w]^{\w}\). It is meager iff it is Rudin-Blass above the Fr\'echet filter.
\end{proposition}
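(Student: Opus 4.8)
The plan is to reduce both equivalences to the classical combinatorial description of meager subsets of $2^\w$: \emph{$A\sub 2^\w$ is meager iff there are an interval partition $\seq{I_n}{n<\w}$ of $\w$ and a point $y\in 2^\w$ such that every $x\in A$ satisfies $x\rest I_n = y\rest I_n$ for only finitely many $n$.} I will use two trivial facts valid for every semifilter $\S$: it contains all cofinite sets (since $\w\in\S$ and $\S$ is $\sub^*$-upward closed) and no finite set (else $\0\in\S$ and $\S=\mathcal P(\w)$). A finite-to-one $f$ may be taken onto, as relabelling its range preserves both the Fr\'echet filter and $[\w]^{\w}$; its fibers $I_n=f^{-1}(n)$ then partition $\w$ into nonempty finite sets, and for $A\sub\w$ we write $Y_A=f^{-1}[A]=\bigcup_{n\in A}I_n$. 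Unwinding the definition, $\S\geq_{RB}$ Fr\'echet says $Y_A\in\S$ iff $A$ is cofinite, and $\S\geq_{RB}[\w]^{\w}$ says $Y_A\in\S$ iff $A$ is infinite. As $Y_A$ is cofinite when $A$ is cofinite and finite when $A$ is finite, the implications ``$A$ cofinite $\Rightarrow Y_A\in\S$'' and ``$A$ finite $\Rightarrow Y_A\notin\S$'' hold automatically; only the reverse implications must be engineered.

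For the easy directions, suppose first $\S\geq_{RB}$ Fr\'echet. For $X\in\S$ put $C(X)=\set{n}{X\cap I_n\neq\0}$; then $X\sub Y_{C(X)}$, so $Y_{C(X)}\in\S$ by upward closure and hence $C(X)$ is cofinite. Thus $\S$ is contained in the set of $X$ meeting all but finitely many blocks, which is meager (for each $m$ the set $\set{X}{X\cap I_n\neq\0 \text{ for all } n\geq m}$ is closed and nowhere dense). Dually, if $\S\geq_{RB}[\w]^{\w}$ then any $X$ containing infinitely many full blocks $I_n$ contains some $Y_A$ with $A$ infinite, hence $X\in\S$; the set of such $X$ is a dense $G_\delta$, so $\S$ is co-meager.

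For the hard directions I feed the relevant set into the meager characterization. If $\S$ is meager, fix $\seq{I_n}{n<\w}$ and $y$ witnessing meagerness of $\S$. Given $X\in\S$, apply upward closure to $X'=X\cup\set{i}{y(i)=1}\in\S$: for all large $n$ we have $X'\rest I_n\neq y\rest I_n$, and since $X'$ already carries every $1$ of $y$, the disagreement must occur at a coordinate of $I_n$ where $y=0$ and $X'=1$; such a coordinate was not added, so it lies in $X$. Hence $X\cap I_n\neq\0$ for all large $n$, i.e. $C(X)$ is cofinite for every $X\in\S$, which is exactly $\S\geq_{RB}$ Fr\'echet. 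If instead $\S$ is co-meager, apply the characterization to the meager set $2^\w\setminus\S$, obtaining $\seq{I_n}{n<\w}$ and $y$ such that any $x$ agreeing with $y$ on infinitely many blocks lies in $\S$. For infinite $A$, let $Z$ agree with $y$ on each block $I_n$ with $n\in A$ and be $0$ elsewhere; then $Z$ matches $y$ on infinitely many blocks, so $Z\in\S$, and $Z\sub Y_A$ gives $Y_A\in\S$ by upward closure. This is precisely $\S\geq_{RB}[\w]^{\w}$.

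The main obstacle is the passage, in these hard directions, from the \emph{arbitrary} prediction $y$ handed back by the meager characterization to a clean statement about whole blocks: $y$ need be neither constantly $0$ nor constantly $1$ on the $I_n$, so ``$X$ eventually disagrees with $y$ on $I_n$'' is not literally ``$X$ eventually meets $I_n$''. It is upward closure of the semifilter that bridges the gap---absorbing the support of $y$ into $X$ in the meager case, and planting a $y$-matching witness inside $Y_A$ in the co-meager case. Finally one should note that the classical Talagrand--Jalali-Naini theorem, though stated for filters, uses only $\sub^*$-upward closure in its proof and therefore applies verbatim to semifilters; this is the observation of \cite{BaZd} on which the whole proposition rests.
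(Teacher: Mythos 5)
Your proof is correct, but a direct comparison with ``the paper's proof'' is moot: the paper does not prove Proposition~\ref{prop:talagrand} at all---it quotes the Talagrand/Jalali-Naini theorem and defers the semifilter case to the observation in \cite{BaZd}. What you have supplied is the missing argument, routed entirely through the chopped-reals characterization of meagerness in $2^\w$ (a set is meager iff some interval partition and prediction $y$ are eventually avoided blockwise). This buys two things over the citation. First, it isolates exactly where ``filter'' can be weakened to ``semifilter'': the classical argument escapes the nowhere dense sets by filling in blockwise patterns, and your bridging moves---absorbing the support of $y$ into $X$ in the meager case, and planting the $y$-pattern inside $Y_A$ in the co-meager case---show that $\sub^*$-upward closure alone suffices; closure under intersections is never invoked. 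Second, the co-meager half is not even a theorem about filters (a filter is never co-meager, so that clause is vacuous there), so your $Z$-construction is genuinely the semifilter-specific content, and it is correct as written: $Z\in\S$ follows from the characterization applied to $2^\w\setminus\S$ without any need to check that $Z$ is infinite, and upward closure then puts $Y_A$ in $\S$. Two small glosses: your closing remark that the classical proof ``applies verbatim'' is redundant once you have reproved the statement, and your reduction to onto $f$ in the Fr\'echet case tacitly needs $\mathrm{ran}(f)$ to be cofinite---this is automatic, since $f^{-1}[\mathrm{ran}(f)]=\w\in\S$ forces $\mathrm{ran}(f)$ into the Fr\'echet filter, but it deserves a sentence.
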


As noted in the introduction, some of our results below will hold for co-meager semifilters. These are related to the $G_\delta$ semifilters in the following natural way:

\begin{corollary}\label{cor:comeagers}
A semifilter $\S$ is co-meager if and only if it contains a $G_\delta$ semifilter.
\end{corollary}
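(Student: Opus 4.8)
The plan is to prove the two implications separately, using the elementary fact that every semifilter is topologically dense for the easy direction and Proposition~\ref{prop:talagrand} for the substantive one.

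For the \emph{if} direction, suppose $\S$ contains a $G_\delta$ semifilter $\G$. I would first record that every semifilter is dense in $2^\w$: given a basic open set $\[s\]$ with $s \sub [0,n]$ and any $A \in \G$, the set $A' = (A \setminus [0,n]) \cup s$ agrees with $A$ off $[0,n]$, so $A \sub^* A'$ and hence $A' \in \G$ by upward closure, while $A' \cap [0,n] = s$ gives $A' \in \[s\]$. Thus $\G$ is a dense $G_\delta$ subset of the Baire space $2^\w$ and is therefore comeager: writing $\G = \bigcap_n U_n$ with each $U_n$ open, density of $\G$ forces each $U_n$ to be dense, exhibiting $\G$ as a countable intersection of open dense sets. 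Since $\G \sub \S$, the semifilter $\S$ is comeager as well.

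For the \emph{only if} direction, suppose $\S$ is co-meager. By Proposition~\ref{prop:talagrand} there is a finite-to-one $f \colon \w \to \w$ with $A \in [\w]^{\w} \iff f^{-1}[A] \in \S$. Writing $I_n = f^{-1}(\{n\})$ for the (finite) fibers of $f$, I would first note that this equivalence forces $f$ to have cofinite range—otherwise $A = \w \setminus \mathrm{ran}(f)$ would be infinite while $f^{-1}[A] = \0 \notin \S$—so after adjusting finitely many values (which changes each $f^{-1}[A]$ only by a finite set, and $\S$ is invariant under finite modifications) we may assume $f$ is onto and every $I_n$ is nonempty. The candidate is
$$\G = \set{B \sub \w}{\set{n}{I_n \sub B} \text{ is infinite}}.$$
Then I would verify the three required properties. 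That $\G$ is $G_\delta$ follows from $\G = \bigcap_k \bigcup_{n \geq k} \set{B}{I_n \sub B}$, where each $\set{B}{I_n \sub B}$ is clopen (it constrains only the finitely many coordinates in $I_n$), so the inner unions are open. Upward closure under $\sub^*$ holds because if $B \sub^* C$ then $B \setminus C$ meets only finitely many fibers, so cofinitely many fibers contained in $B$ are contained in $C$; together with $\w \in \G$ and $\0 \notin \G$ this makes $\G$ a semifilter.

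The containment $\G \sub \S$ is the key step, and I expect it to be the main obstacle: if $N = \set{n}{I_n \sub B}$ is infinite, then $f^{-1}[N] = \bigcup_{n \in N} I_n \sub B$ and $f^{-1}[N] \in \S$ precisely because $N$ is infinite, so $B \in \S$ by upward closure of $\S$. This is where the Rudin--Blass structure supplied by Proposition~\ref{prop:talagrand} is used essentially, and where one must organize the fibers correctly so that membership in $\G$ produces a witness of the form $f^{-1}[N]$ lying $\sub^*$-below $B$. The forward direction is routine once one records that all semifilters are dense.
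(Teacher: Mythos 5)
Your proposal is correct and takes essentially the same route as the paper: the \emph{if} direction via the density of every semifilter in $2^\w$, and the \emph{only if} direction via Proposition~\ref{prop:talagrand}, with your $\G$ (the sets containing infinitely many fibers $I_n$ of $f$) coinciding exactly with the paper's $\bigcap_{n\in\w} U_n$, merely written with a different (equivalent) $G_\delta$ decomposition. Your preliminary step forcing $f$ to be onto is a small extra care handling the finitely many empty fibers, which the paper glosses over, but it does not change the argument.
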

\begin{proof}
Because semifilters are closed under making finite modifications, every semifilter is dense in $2^\w$. The ``if'' direction follows. For the ``only if'' direction, let $\S$ be a co-meager semifilter and, using the first part of Proposition~\ref{prop:talagrand}, let $f: \w \to \w$ be a finite-to-one function such that \(f^{-1}[A]\in\mathcal S\) for any infinite $A \sub \w$.

For each $n$, let
$$U_n = \set{X \in 2^\w}{\exists \text{ distinct } m_1,\dots,m_n \text{ with } \textstyle \bigcup_{1 \leq k \leq n}f^{-1}(m_k) \sub X}.$$
$U_n$ is open and closed upwards with respect to $\sub$. Therefore $\G = \bigcap_{n \in \w}U_n$ is $G_\delta$, and is easily seen to be closed upwards with respect to $\sub^*$. In other words, $\G$ is a $G_\delta$ semifilter, and $\G \sub \S$ by construction.
\end{proof}

Finally, at the first level of the Borel hierarchy, we have a characterization
of \(G_\delta\) semifilters somewhat reminiscent of Mazur's characterization
of \(F_\sigma\) ideals (see \cite{Maz}). Recall that a \emph{monotone lower semicontinuous functional} on \(\mathcal P(\omega)\) is a function 
\(g:\mathcal P(\omega)\to\mathbb R^+_0\cup\{\infty\}\) satisfying:
\begin{enumerate}
  \item  \(g(\emptyset)=0\) and \(g(A)<\infty\) for each finite \(A\);
  \item  \(g(A)\leq g(B)\) for each \(A\subseteq B\); and
  \item  \(g(A)=\sup\{g(A\cap n):n<\omega\}\).
\end{enumerate}

\begin{proposition}\label{prop:functional}
A semifilter  \(\mathcal S\) is \(G_\delta\) 
iff there is a monotone lower semicontinuous functional \(g\) such that \(\mathcal S=\{X:g(X)=\infty\}\).
\end{proposition}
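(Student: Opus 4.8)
For the easier direction, suppose $g$ is a monotone lower semicontinuous functional and let $\S = \set{X}{g(X) = \infty}$. The plan is to express $\S$ as a countable intersection of open sets. For each $n \in \w$, set
$$U_n = \set{X \sub \w}{g(X) > n}.$$
I claim each $U_n$ is open. Indeed, if $g(X) > n$, then by property (3) of the functional, $\sup_m g(X \cap m) > n$, so there is some finite $m$ with $g(X \cap m) > n$. By monotonicity (property (2)), any $Y$ agreeing with $X$ on $[0,m)$ — and hence containing $X \cap m$ — also satisfies $g(Y) \geq g(X \cap m) > n$; thus a whole basic open neighborhood of $X$ lies in $U_n$. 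Clearly $g(X) = \infty$ iff $g(X) > n$ for all $n$, so $\S = \bigcap_n U_n$ is $G_\delta$. It remains to check $\S$ is a semifilter: upward closure under $\sub$ is immediate from monotonicity, upward closure under $\sub^*$ requires a small argument (a finite modification changes $g$ only by a bounded amount, or one observes that finite sets have finite $g$-value while the values in question are infinite), and $\0 \neq \S \neq \mathcal P(\w)$ follows since $g(\0) = 0$ and $g$ is finite on finite sets but presumably infinite somewhere.

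For the harder and more interesting direction, suppose $\S$ is a $G_\delta$ semifilter, so $\S = \bigcap_n U_n$ with each $U_n$ open; without loss of generality I may take the $U_n$ decreasing and each closed upward under $\sub$ (intersecting with the upward closure, as in the proof of Corollary~\ref{cor:comeagers}, and taking finite intersections). The plan is to \emph{build} a functional $g$ witnessing $\S$. Since each $U_n$ is open and upward-closed, it is determined by a collection of finite ``generators'': $X \in U_n$ iff some finite $F \sub X$ has $\[F \rest F\] \sub U_n$. I would like to define $g(X)$ to measure ``how deep into the nested sequence $\seq{U_n}{n \in \w}$ the set $X$ penetrates,'' so that $g(X) = \infty$ exactly when $X$ lies in every $U_n$. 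A natural first attempt is $g(X) = \sup\set{n}{X \in U_n}$, which does give $\S = \set{X}{g(X) = \infty}$; the issue is verifying the three axioms, and in particular lower semicontinuity (property (3)).

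The main obstacle I expect is arranging lower semicontinuity together with finiteness on finite sets. With the naive definition $g(X) = \sup\set{n}{X \in U_n}$, property (3) can fail, because membership $X \in U_n$ is witnessed by a finite subset of $X$, but the \emph{supremum} over all $n$ need not be approximated by initial segments $X \cap m$ in the required way, and moreover a finite set might land in every $U_n$ and get value $\infty$, violating axiom (1). The fix is to define $g$ via the finite witnesses directly: for a finite set $F$, let $n(F)$ be the largest $n$ such that $\[F \rest \max F + 1\] \sub U_k$ for all $k \leq n$ (equivalently, such that $F$ generates membership in $U_n$), and then set
$$g(X) = \sup\set{n(F)}{F \sub X \text{ finite}}.$$
This is manifestly monotone and, being a supremum over finite subsets, automatically satisfies property (3) since every finite $F \sub X$ is contained in some $X \cap m$. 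To secure axiom (1) one must ensure no finite set achieves value $\infty$; this can be arranged by thinning the $U_n$ so that no single basic clopen set $\[F \rest \max F+1\]$ is contained in all of them — which is possible precisely because $\S \neq \mathcal P(\w)$ guarantees $\S$ omits some set, and because a semifilter contains no finite set, so each finite $F$ can be excluded from deep membership by a suitable choice of generators. Checking that this $g$ has $\set{X}{g(X) = \infty} = \bigcap_n U_n = \S$ is then the final bookkeeping step.
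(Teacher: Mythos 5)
Your easy direction is fine (it supplies the details the paper leaves to the reader). The hard direction, however, has a genuine gap, and it sits precisely at your ``without loss of generality'': the claim that the $U_n$ may be taken upward closed under $\sub$ \emph{is} the entire nontrivial content of the proposition, and the justification you sketch for it fails. Upward-closing the $U_n$ preserves openness but can strictly enlarge the intersection, since the finite-stage witnesses may vary with $n$ without any single one surviving into $\S$. Concretely, let $\S = \thick$, let $E$ be the set of even numbers, let $W_n = \set{X}{X \text{ contains an interval of length } n}$, and put $U_n = W_n \cup \bigl(\[E \rest [0,2n)\] \setminus \{E\}\bigr)$; each $U_n$ is open and $\bigcap_{n}U_n = \thick$ (a non-thick member of the intersection would have to agree with $E$ on every $[0,2n)$, hence be $E$, which is excluded). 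Yet $t_n = E \setminus \{2n\}$ is a subset of $E$ lying in $U_k$ for every $k \leq n$, so $E$ is in the upward closure of $\bigcap_{k \leq n}U_k$ for every $n$ --- making the sequence decreasing first does not help --- even though $E \notin \thick$. (Nor does the proof of Corollary~\ref{cor:comeagers} contain any such reduction: its $U_n$ are upward closed by construction, and no closure operation appears there.) The reduction you need is true, but one must \emph{shrink} the $U_n$, not enlarge them, and this is where compactness enters --- it is Mazur's lemma, which is what the paper's proof actually consists of. Pass to the complement $2^\w \setminus \S$, which is $F_\sigma$ and closed under taking subsets; write it as an increasing union of closed sets $F_n$ and replace each $F_n$ by its downward closure $\overline{F}_n = \set{X \cap Y}{Y \in F_n,\ X \sub \w}$, which is closed because it is the continuous image of the compact set $F_n \times \mathcal{P}(\w)$ under $\cap$, and which stays inside $2^\w \setminus \S$ because the complement is downward closed. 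De Morgan then yields the desired decreasing, open, $\sub$-upward-closed $U_n$; in your setup this amounts to replacing each $U_n$ by the complement of the downward closure of $2^\w \setminus U_n$.

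Once that step is secured, the rest of your argument is sound but over-engineered, because the obstacles you set up for the naive definition do not exist. For decreasing open upward-closed $U_n$, the naive $g(X) = \sup\set{n}{X \in U_n}$ --- which is exactly the paper's definition --- already satisfies all three axioms: monotonicity is just upward closure, and for property (3), openness gives some $m$ with $\[X \rest [0,m)\] \sub U_n$, whence $g(X \cap m) \geq n$. Your ``thinning'' is likewise unnecessary: $\set{Y}{F \sub Y} \sub \bigcap_n U_n = \S$ would put the finite set $F$ into $\S$, which is impossible since a proper semifilter contains no finite sets. Finally, in the easy direction you need not verify that $\set{X}{g(X) = \infty}$ is a semifilter --- that is a hypothesis of the statement --- and your parenthetical justifications would in fact fail: a monotone lower semicontinuous functional need not change boundedly under finite modifications (take $g(A) = \card{A}$ if $0 \in A$ and $g(A) = \min(\card{A},5)$ otherwise; then $\set{X}{g(X)=\infty}$ is the collection of infinite sets containing $0$, which is not $\sub^*$-upward closed).
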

\begin{proof}
The set  \(\{X:g(X)=\infty\}\) is clearly \(G_\delta\), so we need only proof the other direction. 
The fastest way to see this is to use the following lemma of Mazur (\cite{Maz}, Proposition 1.1.):

\begin{claim}
An  \(F_\sigma\) family \(\mathcal I\) of subsets of \(\omega\) closed under taking subsets can be written
as an increasing union of closed sets which are themselves closed under taking subsets.
\end{claim}

\begin{proof}[Proof of Claim]
Write  \(\mathcal I=\bigcup_{n<\omega} F_n\) with \(F_n\subseteq F_{n+1}\) for each \(n<\omega\). Then let 
$$\overline{F}_n = \{X\cap Y:Y\in F_n, X\in\mathcal P(\omega)\}.$$
It is clear that \(\mathcal I=\bigcup_{n<\omega} \overline{F}_n\). Moreover each \(\overline{F}_n\) is closed
since it is the image of a compact set (\(F_n\times\mathcal P(\omega)\)) under a continuous map (\(\cap\)).
\renewcommand{\qedsymbol}{$\blacksquare$}
\end{proof}

Using the above claim (and De Morgan laws), write  \(\mathcal S\) as an intersection of a decreasing sequence
of open sets closed under taking supersets, $\mathcal S=\bigcap_{n<\omega} U_n$, and define \(g\) by $g(A) = \sup\{n: A\in G_n\}$.
\end{proof}

\section{$\pseudo$ and $\tower$}\label{sec:p&t}

In this section, we show that, in a certain combinatorial sense, $G_\delta$ semifilters have the same ``depth'' as $[\w]^{\w}$. To make this precise, we define two cardinal invariants that measure the ``depth'' of a partial order (similar definitions appear in \cite{Brn}). To avoid trivialities, we assume in this section that every partial order $\p$ has a non-atomic antisymmetric quotient: i.e., for every $a \in \p$ there is some $x \in \p$ with $a \not\leq x \leq a$.

For a partial order $\p$,
\begin{itemize}
\item $\pseudo_\p$ is the smallest size of an unbounded centered subset of $\p$.
\item $\tower_\p$ is the smallest size of an unbounded chain in $\p$.
\end{itemize}
Note that unbounded centered sets and chains must exist in $\p$ because of our requirement that the separative quotient of $\p$ is non-atomic. In fact, our condition on $\p$ is equivalent to the condition that every centered set in $\p$ is contained in an unbounded centered set.

It is easily checked that $\pseudo_{\pwmf} = \pseudo_{[\w]^{\w}} = \pseudo$ and $\tower_{\pwmf} = \tower_{[\w]^{\w}} = \tower$. In other words, our notation is justified, and these cardinal characteristics naturally extend the familiar $\pseudo$ and $\tower$. Note that, in the definition of $\tower_\p$, it suffices to consider (reverse) well-ordered chains.

A deep new result of Malliaris and Shelah is that $\pseudo = \tower$ (see \cite{M&S}). That is, these two notions of ``depth'' coincide for $\pwmf$. The main result of this section (Theorem~\ref{thm:M&S} below) asserts that $G_\delta$ semifilters also satisfy the Malliaris-Shelah equality, and moreover have the same ``depth'' as $\pwmf$.

\begin{proposition}\label{prop:RBinequalities}
Given any semifilters $\S$ and $\G$, if \(\mathcal S\geq_{RB}\mathcal G\) then 
\(\mathfrak p_{\mathcal S}\leq\mathfrak p_{\mathcal G}\) and 
\(\mathfrak t_{\mathcal S}\leq\mathfrak t_{\mathcal G}\).
\end{proposition}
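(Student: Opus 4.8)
The plan is to fix a finite-to-one $f \colon \w \to \w$ witnessing $\S \geq_{RB} \G$ and to transport extremal families from $\G$ into $\S$ via the pullback map $\phi(A) = f^{-1}[A]$. First I would record the elementary properties of $\phi$. It is monotone with respect to $\sub$ and commutes with intersections, so $\phi(A \cap B) = \phi(A) \cap \phi(B)$. Because $f$ is finite-to-one, $\phi$ also respects $\sub^*$: whenever $A \setminus B$ is finite, $\phi(A) \setminus \phi(B) = f^{-1}[A \setminus B]$ is finite as well. Finally, by the defining property of $\geq_{RB}$, we have $A \in \G$ iff $\phi(A) \in \S$. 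These three facts immediately give that $\phi$ maps $\G$ into $\S$, sends $\sub^*$-chains to $\sub^*$-chains, and (combining commutation with $\cap$, the membership equivalence, and upward closure of $\S$) sends centered families to centered families.

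With this in hand the proof reduces to a single point. Let $\mathcal C \sub \G$ be an unbounded centered family with $|\mathcal C| = \pseudo_\G$ (respectively, an unbounded chain with $|\mathcal C| = \tower_\G$). Then $\phi[\mathcal C] \sub \S$ is again centered (respectively, a chain) and satisfies $|\phi[\mathcal C]| \leq |\mathcal C|$. So it suffices to prove that $\phi[\mathcal C]$ is \emph{unbounded} in $\S$; granting this, we obtain $\pseudo_\S \leq |\phi[\mathcal C]| \leq \pseudo_\G$ and $\tower_\S \leq |\phi[\mathcal C]| \leq \tower_\G$, which is the assertion.

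The crux, and the only step that is not bookkeeping, is the unboundedness, which I would argue contrapositively. Suppose $X \in \S$ were a lower bound for $\phi[\mathcal C]$, i.e.\ $X \sub^* f^{-1}[A]$ for every $A \in \mathcal C$. The key computation is that this pushes forward along $f$: since $f[X] \setminus A \sub f\!\left[X \setminus f^{-1}[A]\right]$ is the image of a finite set and hence finite, we get $f[X] \sub^* A$ for every $A \in \mathcal C$. It then remains to place $f[X]$ inside $\G$, and here I would use $X \sub f^{-1}[f[X]]$ together with upward closure of $\S$ to conclude $f^{-1}[f[X]] \in \S$, whence $f[X] \in \G$ by the Rudin-Blass equivalence. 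Thus $f[X]$ would be a lower bound for $\mathcal C$ inside $\G$, contradicting the unboundedness of $\mathcal C$; so no such $X$ exists and $\phi[\mathcal C]$ is unbounded.

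I expect the main obstacle to be exactly this push-forward step: one must notice that the correct witness is $f[X]$ rather than $X$ itself, and that its membership in $\G$ comes from upward closure of $\S$ (applied to $f^{-1}[f[X]] \supseteq X$) rather than directly from the Rudin-Blass condition. By contrast, the preservation of centeredness should be routine once $\phi(A \cap B) = \phi(A) \cap \phi(B)$ and the membership equivalence are available, although I would take care to check that ``all finite intersections of $\phi[\mathcal C]$ land in $\S$'' genuinely exhibits the image as centered in the sense of the filter-base definition in Section~\ref{sec:prelims}.
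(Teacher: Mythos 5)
Your proposal is correct and is essentially the paper's proof: both pull the extremal family back along $\phi(A)=f^{-1}[A]$ (using finite-to-one-ness to preserve $\sub^*$ and hence chains/centeredness) and then defeat a hypothetical lower bound $X\in\S$ by pushing it forward to $f[X]\in\G$, which bounds the original family. If anything, you are slightly more explicit than the paper at one point it glosses over, namely that $f[X]\in\G$ follows from $X\sub f^{-1}[f[X]]$ and upward closure of $\S$ combined with the Rudin--Blass equivalence.
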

\begin{proof} We only show the first inequality, the proof of the second is analogous.
It is sufficient to find, for each centered family \(\mathcal F\subseteq\mathcal G\)
with no lower bound in \(\mathcal G\), a centered family \(\mathcal H\subseteq\mathcal S\) of the same size having no lower bound in \(\mathcal S\). 

Let \(f:\omega\to\omega\) be a finite-to-one function witnessing that \(\mathcal S\geq_{RB}\mathcal G\) and let \(\mathcal H=\{f^{-1}[A]:A\in \mathcal F\}\). Notice
that, since \(f\) is finite-to-one, if \(A\subseteq^*B\) then \(f^{-1}[A]\subseteq^* f^{-1}[B]\). It follows that \(\mathcal H\) is centered. Aiming towards a contradiction,
assume that \(A\) is a lower bound for \(\mathcal H\) in \(\mathcal S\). Since
\(f\) is a Rudin-Blass reduction of \(\mathcal S\) to \(\mathcal G\) it follows that
\(B=f[A]\in\mathcal G\). Since \(\mathcal F\) does not have a lower bound in 
\(\mathcal G\) there must be an \(F\in\mathcal F\) such that \(B\setminus F\) is
infinite. Then \(f^{-1}[B]\cap A\) is also infinite and disjoint from \(f^{-1}[F]\in\mathcal H\), contradicting the assumption that \(A\) was a lower for
\(\mathcal H\).
\end{proof}

\begin{corollary}\label{cor:towerinequality}
If $\S$ is a co-meager semifilter then $\pseudo_\S \leq \tower_\S \leq \tower$.
\end{corollary}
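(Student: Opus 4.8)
The plan is to prove the two inequalities separately, since they have rather different characters. The inequality $\pseudo_\S \leq \tower_\S$ is purely order-theoretic and holds for every partial order satisfying our standing non-atomicity hypothesis. The key observation is that every chain is centered: if $C \sub \S$ is a $\sub^*$-chain and $B_1, \dots, B_n \in C$, then the $\sub^*$-smallest of these is a lower bound for all of them, so every finite subset of $C$ has a lower bound. In fact $C$ is itself a filter base, because its upward closure is closed under finite intersections precisely because any two members of $C$ are $\sub^*$-comparable. Consequently any unbounded chain is an unbounded centered set, and the smallest unbounded centered set can therefore be no larger than the smallest unbounded chain, giving $\pseudo_\S \leq \tower_\S$.

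For the second inequality $\tower_\S \leq \tower$, I would simply chain together two results already established. Since $\S$ is co-meager, Proposition~\ref{prop:talagrand} supplies a finite-to-one $f$ witnessing $\S \geq_{RB} [\w]^\w$. Proposition~\ref{prop:RBinequalities} then yields $\tower_\S \leq \tower_{[\w]^\w}$, and the final step is to recall that $\tower_{[\w]^\w} = \tower$, as noted immediately before the statement. Combining the two parts gives $\pseudo_\S \leq \tower_\S \leq \tower$.

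I do not expect a genuine obstacle here; the proof is essentially a matter of assembling earlier results in the right order. The only point that warrants a moment's care is the verification that a chain counts as \emph{centered} in the precise sense of the paper, namely that it is contained in (indeed, is) a filter base. This rests entirely on the $\sub^*$-comparability of the members of a chain, which is what guarantees that the upward closure is closed under the finite intersections required by the definition of a filter.
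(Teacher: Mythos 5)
Your proof is correct and takes essentially the same route as the paper: it too observes that every chain is centered (so $\pseudo_\S \leq \tower_\S$, with no co-meagerness needed) and then combines Proposition~\ref{prop:talagrand} with Proposition~\ref{prop:RBinequalities} to obtain $\tower_\S \leq \tower_{[\w]^\w} = \tower$. Your added verification that a chain is itself a filter base is just a harmless elaboration of what the paper leaves implicit.
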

\begin{proof}
Since every chain is centered in $\S$, it is clear from the definitions of $\pseudo_\S$ and $\tower_\S$ that $\pseudo_\S \leq \tower_\S$ (and this does not depend on $\S$ being co-meager). If $\S$ is co-meager, then by Proposition~\ref{prop:talagrand} $\S \geq_{RB} [\w]^\w$, and by Proposition~\ref{prop:RBinequalities} $\tower_\S \leq \tower_{[\w]^\w} = \tower$.
\end{proof}

\begin{theorem}\label{thm:M&S}
If $\G$ is a $G_\delta$ semifilter, then $\pseudo_\G = \tower_\G = \pseudo$.
\end{theorem}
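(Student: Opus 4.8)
The plan is to combine the inequalities already in hand with a single new one. Corollary~\ref{cor:towerinequality} gives $\pseudo_\G \le \tower_\G \le \tower$, and the Malliaris--Shelah theorem gives $\tower = \pseudo$, so $\pseudo_\G \le \tower_\G \le \pseudo$. It therefore suffices to prove the reverse inequality $\pseudo \le \pseudo_\G$; this forces all four quantities to coincide. Unwinding the definition of $\pseudo_\G$, what I must show is that every family $\mathcal F \sub \G$ which is centered in $\G$ and has $\card{\mathcal F} < \pseudo$ is bounded in $\G$ --- equivalently, that such an $\mathcal F$ has a pseudo-intersection lying in $\G$.

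Before constructing this pseudo-intersection I would record two reductions. First, if $\mathcal F$ is centered in $\G$ then every finite intersection of its members is again in $\G$: finitely many members lie in a common filter base $\mathcal B \sub \G$, so their intersection has some $B \in \mathcal B \sub \G$ below it in $\sub^*$, and upward closure of $\G$ puts the intersection in $\G$. Hence I may assume $\mathcal F$ is closed under finite intersections. Second, because $\card{\mathcal F} < \pseudo$ and $\mathcal F$ has the strong finite intersection property, $\mathcal F$ already has a pseudo-intersection in $[\w]^\w$; the entire difficulty is to obtain one inside $\G$. To use the hypothesis on $\G$ I would fix, via Proposition~\ref{prop:functional}, a monotone lower semicontinuous functional $g$ with $\G = \set{X}{g(X) = \infty}$, so that membership in $\G$ can be certified by pushing $g$ to infinity along finite approximations.

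The engine is a diagonalization against $g$, transparent in the countable case $\mathcal F = \set{F_k}{k < \w}$. Writing $G_k = F_0 \cap \dots \cap F_k \in \G$, a $\sub$-decreasing sequence, I would build finite blocks $t_0, t_1, \dots$ with $\max t_{k-1} < \min t_k$, $t_k \sub G_k$, and $g(t_0 \cup \dots \cup t_k) \ge k$; this is possible since $g(G_k) = \infty$ and $g$ is lower semicontinuous, so a long enough finite piece of a tail of $G_k$ raises the value past $k$. Then $X = \bigcup_k t_k$ has $g(X) = \infty$, so $X \in \G$, while $t_k \sub G_k \sub F_j$ for all $k \ge j$ gives $X \sub^* F_j$; thus $X$ is a lower bound for $\mathcal F$ in $\G$. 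This settles the theorem whenever the filter generated by $\mathcal F$ has countable character.

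For arbitrary $\mathcal F$ of size below $\pseudo$ I would push this construction through the transfinite, using the Malliaris--Shelah equality $\pseudo = \tower$ to keep the relevant $\sub^*$-decreasing chains shorter than $\tower$, and using the standard inequality $\pseudo \le \mathfrak b$ to dominate the ``search depths'' (the least stage by which a tail of each member $F$ realizes an $n$-th level $g$-witness) by a single function, so that witnesses can be chosen uniformly across $\mathcal F$. The main obstacle, which I expect to be the crux, is the passage through limits of uncountable cofinality: there the pseudo-intersection supplied by $\pseudo = \tower$ need not lie in $\G$, and no single $\w$-length fusion can satisfy the uncountably many requirements $X \sub^* F$ at once, since any fixed sequence of finite blocks can be driven into only countably many of the $F$. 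This is exactly where the $G_\delta$ hypothesis must be used in full force: since every finite intersection of members of $\mathcal F$ is again in $\G$, the functional $g$ furnishes witnesses sitting inside arbitrarily deep members of the family, and coordinating these ``deep'' $g$-witnesses with a pseudo-intersection is the delicate step the argument must accomplish.
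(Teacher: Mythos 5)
There is a genuine gap, and you have in effect located it yourself: everything after your countable case is a description of an obstacle rather than an argument. Your reduction is exactly the paper's ($\pseudo_\G \le \tower_\G \le \tower = \pseudo$ via Corollary~\ref{cor:towerinequality} and Malliaris--Shelah, so it suffices to prove $\pseudo \le \pseudo_\G$), and your finite-block construction for a \emph{countable} centered family is correct: since each $G_k$ minus an initial segment is still in $\G$, monotonicity and lower semicontinuity of $g$ from Proposition~\ref{prop:functional} do produce the blocks $t_k$, and $X=\bigcup_k t_k$ is a bound in $\G$. But this only establishes $\pseudo_\G \ge \aleph_1$. The content of the theorem is that \emph{every} centered family of size $\k$ is bounded in $\G$ for each $\k < \pseudo$, and, as you yourself observe, no $\w$-length fusion can meet uncountably many requirements $X \sub^* A_\a$ simultaneously. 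Neither of the tools you gesture at repairs this: $\pseudo = \tower$ is used in the paper only once, to convert the upper bound $\tower$ into $\pseudo$, not inside any recursion, and $\pseudo \le \mathfrak{b}$ addresses domination of search depths, which is not the problem --- the problem is meeting $\k$-many $\sub^*$-requirements at once.

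The missing idea is Bell's theorem: $\k < \pseudo$ is equivalent to $\mathrm{MA}^{\k}_{\s\text{-centered}}$, and this is what replaces your hoped-for transfinite fusion. The paper applies it to a Mathias-type poset whose conditions are pairs $(s,F)$ with $s \in [\w]^{<\w}$ and $F \in [\k]^{<\w}$, ordered by end-extending $s$ inside $\bigcap_{\a \in F} A_\a$. The sets $D_\a = \set{(s,F)}{\a \in F}$ are trivially dense, and the sets $E_n = \set{(s,F)}{\[s\] \sub U_n}$ (where $\G = \bigcap_n U_n$ with $U_n$ open decreasing) are dense by precisely the observation underlying your block construction: $s \cup \bigcap_{\a \in F}A_\a$ lies in $\G$, so some finite initial segment of it already certifies membership in $U_n$. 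A filter on the poset meeting these $\k + \aleph_0$ dense sets yields $A \in \bigcap_n U_n = \G$ with $A \sub^* A_\a$ for all $\a < \k$. So your ``delicate step'' --- coordinating deep $g$-witnesses with a pseudo-intersection across an uncountable family --- is resolved not by a cleverer recursion through limit stages but by genericity: $\s$-centeredness of the poset converts the cardinal hypothesis $\k < \pseudo$ into the simultaneous satisfaction of all requirements. Without this (or an equivalent device), your proposal proves only the countable case of the inequality $\pseudo \le \pseudo_\G$.
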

\begin{proof}
By the aforementioned result of Malliaris and Shelah, $\pseudo = \tower$. Using Corollaries \ref{cor:comeagers} and \ref{cor:towerinequality}, $\pseudo_\G \leq \tower_\G \leq \tower$. Therefore it is sufficient to prove $\pseudo \leq \pseudo_\G$.

Let $U_n$, $n < \w$, be open sets such that $\G = \bigcap_{n < \w}U_n$. Replacing $U_n$ with $\bigcap_{m \leq n}U_m$ if necessary, we may assume that the $U_n$ are decreasing.

Given $\k < \pseudo$, we want to show $\k < \pseudo_\G$. Let $\set{A_\a}{\a < \k}$ be centered in $\G$. By Bell's Theorem (see \cite{Bel}), it suffices to use $\mathrm{MA}_{\s\text{-centered}}^\k$ to find a lower bound for this family in $\G$.

To do this, we use a common variant of the Mathias forcing. Specifically, we have a forcing notion $\p$ whose conditions are pairs $(s,F)$, where $s$ is a finite subset of $\w$ and $F$ is a finite subset of $\k$. We say that $(s,F) \leq (t,G)$ if and only if $t \supseteq s$, $G \supseteq F$, and $t \setminus s \sub \bigcap_{\a \in F}A_\a$. Intuitively, the condition $(s,F)$ promises that $s$ will be contained in the set $X$ we are trying to build, and $X \setminus s$ will be contained in each $A_\a$, $\a \in F$.

For each $\a < \k$,
$$D_\a = \set{(s,F)}{\a \in F}$$
is dense in $\p$ because $(s,F \cup \{\a\})$ always extends $(s,F)$. For each $n < \w$,
$$E_n = \set{(s,F)}{\[s\] \sub U_n}$$
is also dense in $\p$. To see that $E_n$ is dense, fix any $(s,F)$. Let $A_0 = \bigcap_{\a \in F}A_\a$ and let $A = A_0 \cup s$. Since $A_0 \in \G$, $A \in \G$ and therefore there is some $n$ such that $\[A \rest [0,n)\] \sub U_n$. In this case, we have $(A \cap n,F) \leq (s,F)$ and $(A \cap n,F) \in E_n$.

By $\mathrm{MA}_{\s\text{-centered}}^\k$, there is a filter $G$ on $\p$ meeting all the $D_\a$ and all the $E_n$. Let $A = \bigcup \set{s}{(s,F) \in G}$. It is straightforward to check that $A \in U_n$ for every $n$ (because $G \cap E_n \neq \0$) and that $A \sub^* A_\a$ for every $\a$ (because $G \cap D_\a \neq \0$). Therefore $A \in \G$ and $A$ is a lower bound for $\set{A_\a}{\a < \k}$. Hence $\k < \pseudo_\G$, and it follows that $\pseudo \leq \pseudo_\G$.
\end{proof}

\begin{remark}\label{rem:1}
The requirement that $\G$ be $G_\delta$ in Theorem~\ref{thm:M&S} cannot be relaxed to include general $F_\s$ semifilters. To see this, let $\S$ denote the semifilter of syndetic sets: these are sets with ``bounded gaps'', i.e., $A$ is syndetic iff its complement fails to be thick. It is a straightforward exercise to show that $\S$ is $F_\s$ in $2^\w$. Also, one can show that $\S$ contains no lower bound for the sequence $\seq{\set{m \cdot 2^n}{m \in \w}}{n \in \w}$, even though each element of this sequence is in $\S$. This shows $\pseudo_\S = \tower_\S = \aleph_0$. 
\end{remark}

\begin{remark}\label{rem:2}
The requirement that $\G$ be $G_\delta$ in Theorem~\ref{thm:M&S} cannot be relaxed to include general co-meager semifilters. To see this, let $\S$ denote the semifilter of sets that are either syndetic or thick. Since the syndetic sets form an $F_\s$ semifilter and the thick sets form a $G_\delta$ semifilter, $\S$ is $\Delta^0_3$ and co-meager. Once again, however, $\S$ contains no lower bound for the sequence $\seq{\set{m \cdot 2^n}{m \in \w}}{n \in \w}$, and $\pseudo_\S = \tower_\S = \aleph_0$. 
\end{remark}

The preceding remarks show that the conclusion $\pseudo = \pseudo_\G = \tower_\G$ fails as soon as we relax the requirement that $\G$ be $G_\delta$. Jan Star\'y has asked us whether the conclusion $\pseudo_\G = \tower_\G$ can also fail. Unfortunately, we must leave this question open:

\begin{question}\label{q:stary}
Is there a semifilter $\S$ such that $\pseudo_\S \neq \tower_\S$?
\end{question}

By the following simple observation, however, we can say that the answer to Star\'y's question is consistently negative:

\begin{proposition}
\emph{CH} implies that $\pseudo_\S = \tower_\S$ for every semifilter $\S$.
\end{proposition}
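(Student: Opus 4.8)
The plan is to reduce the statement to a single case by combining the ZFC inequality $\pseudo_\S \le \tower_\S$ with a cardinality bound supplied by CH. Since every chain in $\S$ is centered, an unbounded chain is in particular an unbounded centered family, so $\pseudo_\S \le \tower_\S$ holds for every semifilter and it is enough to prove $\tower_\S \le \pseudo_\S$. Because $\S$ contains an unbounded chain (by the standing non-atomicity assumption of this section) and any chain is a subfamily of $\S \sub \mathcal P(\w)$, we have $\tower_\S \le \card{\S} \le \continuum$, so under CH $\tower_\S \le \aleph_1$. Moreover both invariants are infinite, since any finite centered family or finite chain has a lower bound (the intersection of its finitely many members lies in the generated filter, hence in $\S$). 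Thus each of $\pseudo_\S$ and $\tower_\S$ belongs to $\{\aleph_0, \aleph_1\}$, and the equality $\pseudo_\S = \tower_\S$ can fail only if $\pseudo_\S = \aleph_0$ and $\tower_\S = \aleph_1$. It therefore suffices to prove the CH-free implication: if $\pseudo_\S = \aleph_0$ then $\tower_\S = \aleph_0$.

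To establish this implication I would start from a countable centered family $\set{A_n}{n < \w} \sub \S$ with no lower bound and pass to its partial intersections $B_n = \bigcap_{k \le n} A_k$. The key observation is that each $B_n$ is again in $\S$: being centered, the $A_n$ lie inside a filter base whose generated filter $\F$ is contained in $\S$ and is closed under finite intersections, whence $B_n \in \F \sub \S$. The sets $B_n$ are $\sub$-decreasing and so form a (reverse well-ordered) chain in $(\S, \sub^*)$, and this chain is unbounded: any lower bound $C$ would satisfy $C \sub^* B_n \sub A_n$ for every $n$, making $C$ a lower bound for the original family. Hence $\set{B_n}{n < \w}$ is a countable unbounded chain and $\tower_\S \le \aleph_0$, as required.

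I expect the role of CH to be purely arithmetical --- it only forces both invariants into the two-element set $\{\aleph_0, \aleph_1\}$, after which the genuine content is the elementary, choice-free observation that the partial intersections of a countable centered family form a chain with exactly the same lower bounds. The one step that warrants care, and where I anticipate the only real bookkeeping, is the verification that the $B_n$ stay inside $\S$: this relies on the definition of \emph{centered} as containment in a filter base rather than on any closure property of $\S$ itself, since a semifilter need not be closed under finite intersections (for instance $[\w]^\w$ is not). Once that is checked, the proof is complete.
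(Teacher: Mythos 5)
Your proof is correct and takes essentially the same route as the paper's: the ZFC inequality $\pseudo_\S \leq \tower_\S$, the observation that neither invariant can be finite (a finite centered family has the lower bound $\bigcap F$), and the CH-free claim that $\pseudo_\S = \aleph_0$ implies $\tower_\S = \aleph_0$, proved via the chain of partial intersections $\bigcap_{m \leq n} A_m$ of a countable unbounded centered family. Your added care in verifying that these intersections stay in $\S$ (through the filter generated by a filter base witnessing centeredness, since a semifilter itself need not be closed under intersections) and in making explicit the bound $\tower_\S \leq \continuum$ fills in details the paper leaves implicit, but the argument is the same.
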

\begin{proof}
Suppose CH holds and let $\S$ be any semifilter. Clearly $\pseudo_\S \leq \tower_\S$, so if $\pseudo_\S$ is uncountable then $\pseudo_\S = \tower_\S$. The following claim (which does not require CH) completes the proof.

\begin{claim}
If $\pseudo_\S \leq \aleph_0$ then $\pseudo_\S = \tower_\S = \aleph_0$.
\end{claim}

To prove the claim, first note that $\pseudo_\S$ cannot be finite: if $F$ is finite and centered in $\S$, then $F$ has a lower bound in $\S$, namely $\bigcap F$. So suppose $\pseudo_\S = \aleph_0$ and let $\set{A_n}{n \in \w}$ be an unbounded centered subset of $\S$. Then $\set{\bigcap_{m \leq n}A_m}{n \in \w}$ is an unbounded chain, so that $\tower_\S = \aleph_0$.
\end{proof}

In Remarks \ref{rem:1} and \ref{rem:2}, we see that it is very easy to find semifilters $\S$ with $\pseudo_\S, \tower_\S < \pseudo$. It is clearly consistent to have $\tower_\S \leq \tower$ for every semifilter (e.g., this follows from $\tower = \continuum$). The following proposition shows that the opposite is also consistent. It also shows that Corollary~\ref{cor:towerinequality} cannot be extended to all semifilters with the Baire property.

\begin{proposition}
It is consistent that there is a meager semifilter $\S$ with $\pseudo < \pseudo_\S$.
\end{proposition}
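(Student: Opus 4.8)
The plan is to produce, in some forcing extension, a meager semifilter $\S$ whose $\pseudo_\S$ exceeds $\pseudo$. By Proposition~\ref{prop:talagrand}, a meager semifilter is exactly one that is Rudin-Blass above the Fr\'echet filter, i.e.\ there is a finite-to-one $f:\w\to\w$ with $A\in\S \iff f^{-1}[A]\in\S$ and such that $\S$ "lives on" the fibers of $f$ in the sense that membership is determined by which complete fibers a set contains. The cleanest way to arrange a large $\pseudo_\S$ is to make $\S$ isomorphic (via such an $f$) to a semifilter built directly from a filter with high pseudointersection number. Concretely, I would start from a model where $\pseudo$ is small but there exists a filter (or an ultrafilter) $\mathcal U$ on $\w$ whose pseudointersection number $\pseudo_{\mathcal U}$ (the least size of a $\sub^*$-unbounded centered subfamily) is large. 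For instance, in a model obtained by a finite-support ccc iteration one can keep $\pseudo=\aleph_1$ while adding, at the end, a filter all of whose small subfamilies have pseudointersections inside the filter.

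The construction proper: fix a partition of $\w$ into finite intervals $\seq{I_n}{n\in\w}$ and let $f:\w\to\w$ be the finite-to-one map collapsing each $I_n$ to the point $n$. Define
$$\S = \set{A\sub\w}{\set{n}{I_n\sub A}\in\mathcal U^+},$$
or a suitable variant, where $\mathcal U$ is the chosen filter on the index set and $\mathcal U^+$ denotes its coideal; the precise recipe is chosen so that $\S$ is upward-closed under $\sub^*$ and is Rudin-Blass above the Fr\'echet filter via $f$. By Proposition~\ref{prop:talagrand} this forces $\S$ to be meager. The point of routing through $f$ is that centered subfamilies of $\S$ and their lower bounds pull back and push forward between $\S$ and the index-level filter exactly as in the proof of Proposition~\ref{prop:RBinequalities}, so that $\pseudo_\S$ is controlled by the pseudointersection number of $\mathcal U$ rather than by $\pseudo$.

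Next I would verify the two inequalities. That $\S$ is meager is immediate from the Rudin-Blass characterization once $f$ witnesses $\S\geq_{RB}(\text{Fr\'echet})$; this is a direct check that $A$ is in $\S$ iff $f^{-1}[f[A]]$, or rather the complete-fiber trace of $A$, meets the right family. For $\pseudo<\pseudo_\S$, I would argue that any centered family in $\S$ of size below $\pseudo_{\mathcal U}$ has a lower bound: given $\set{A_\a}{\a<\k}$ centered in $\S$ with $\k<\pseudo_{\mathcal U}$, the traces $\set{n}{I_n\sub A_\a}$ form a centered family in $\mathcal U^+$ (using finiteness of the $I_n$, exactly as in the $\sub^*$-preservation step of Proposition~\ref{prop:RBinequalities}), hence have a pseudointersection inside $\mathcal U^+$; pulling this back through $f$ and taking the union of the corresponding complete fibers yields a lower bound in $\S$. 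Thus $\pseudo_{\mathcal U}\leq\pseudo_\S$, and choosing the model so that $\pseudo<\pseudo_{\mathcal U}$ finishes the argument.

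The main obstacle is the \emph{consistency engine}: I must exhibit a model in which $\pseudo$ is genuinely smaller than the pseudointersection number of some definable-enough filter $\mathcal U$, while simultaneously keeping $\mathcal U$'s coideal upward-closed and the resulting $\S$ meager rather than accidentally co-meager. The delicate point is that $\pseudo$ is notoriously robust—it is small in many models—so the filter $\mathcal U$ must be engineered to have a \emph{large} pseudointersection number even though the ambient $\pseudo$ stays small; these two quantities are computed over different orders (the global $([\w]^\w,\sub^*)$ versus the restricted order inside $\mathcal U^+$), and reconciling them requires a forcing that adds the filter $\mathcal U$ generically while a prior small unbounded centered family witnesses $\pseudo=\aleph_1$. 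Verifying that the generically added filter has no small unbounded centered subfamily—i.e.\ that the iteration does not inadvertently create a pseudointersection-killing tower of small size—is where the real work lies, and it is the step I expect to demand the most careful bookkeeping.
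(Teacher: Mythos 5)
There are two genuine gaps here, and the first is fatal to the construction as you describe it: the blow-up step does not make $\S$ meager --- it provably produces the wrong category. Your map $f$ collapsing each $I_n$ to $n$ witnesses $\S \geq_{RB} \mathcal U^+$ (since $f^{-1}[B] \in \S$ iff $B \in \mathcal U^+$), not $\S \geq_{RB}$ Fr\'echet, which is what Proposition~\ref{prop:talagrand} demands for meagerness (you also misquote that characterization: the equivalence should read ``$A$ is cofinite iff $f^{-1}[A] \in \S$''; as you wrote it, it is a tautology). Rudin--Blass reductions compose, so the category of $\S$ is tied to that of $\mathcal U^+$. If $\mathcal U$ is meager, its dual ideal is meager, so the coideal $\mathcal U^+$ is \emph{comeager}, hence $\S \geq_{RB} \mathcal U^+ \geq_{RB} [\w]^{\w}$ and $\S$ is comeager --- and then Corollary~\ref{cor:towerinequality} together with Malliaris--Shelah gives $\pseudo_\S \leq \tower_\S \leq \tower = \pseudo$, the exact opposite of your goal. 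If instead $\mathcal U$ is an ultrafilter, then $\mathcal U^+ = \mathcal U$ is nonmeager, and $\S$ is nonmeager too: given any candidate Talagrand interval partition for $\S$, coarsen it to align with the blocks $I_n$, take $B \in \mathcal U$ avoiding infinitely many coarse intervals, and push it through $B \mapsto \bigcup_{n \in B} I_n$ to get a member of $\S$ avoiding infinitely many of the original intervals. So to make $\S$ meager you would need $\mathcal U$ itself meager (and $\S$ defined from $\mathcal U$, not $\mathcal U^+$), at which point the blow-up is superfluous: what you need is a meager filter with large internal pseudointersection number, which is the original problem restated.

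The second gap is that the consistency engine, which you yourself flag as ``where the real work lies,'' is the entire content of the proposition and is left open. The paper resolves it with two ideas your sketch is missing. First, instead of a generically added filter requiring delicate bookkeeping, take $\F$ to be the filter of almost-supersets of a strictly $\sub^*$-monotone chain $\seq{A_\a}{\a < \w_2}$; then any centered subfamily $\set{B_\b}{\b < \w_1}$ is bounded by a single chain element via a trivial cofinality argument (each $B_\b$ lies above some $A_{\a_\b}$, and the chain element indexed by $\sup_{\b} \a_\b < \w_2$ bounds them all), while the chain itself is unbounded in $\F$, so $\pseudo_\F = \aleph_2$ outright. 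Second, meagerness comes from cardinal arithmetic rather than Rudin--Blass: $\F$ is a union of $\aleph_2$ cones $\set{X}{A_\a \sub^* X}$, each meager, so $\F$ is meager in any model with $\mathrm{add}(\mathcal M) > \aleph_2$. A length-$\w_3$ finite-support Hechler iteration over a model of GCH supplies everything simultaneously: $\mathrm{add}(\mathcal M) = \continuum = \aleph_3$, $\pseudo = \tower = \aleph_1$, and (by Baumgartner--Dordal, with the chain conditions being absolute to the final model) the required $\w_2$-chain. Your instinct that one should find a filter whose internal $\pseudo$ exceeds the ambient $\pseudo$ is correct and matches the paper; but both of your implementation steps --- meagerness via fiber blow-up, and the model via an unspecified forcing --- fail or are missing, and the chain-generated filter is the device that makes both steps easy.
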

\begin{proof}
Recall that $\mathrm{add}(\mathcal M)$ denotes the smallest number of meager sets whose union is non-meager. Suppose that:
\begin{enumerate}
\item $\mathrm{add}(\mathcal M) = \continuum = \aleph_3$.
\item $\pseudo = \tower = \aleph_1$.
\item there is a sequence $\seq{A_\a}{\a < \w_2}$ in $[\w]^\w$ such that $A_\a \sub^* A_\b$ and $A_\a \neq^* A_\b$ whenever $\a < \b < \w_2$.
\end{enumerate}
Such a model is obtained, for example, by a length-$\w_3$, finite-support iteration of the Hechler forcing (a.k.a., dominating forcing) over a model of GCH. It is well-known (see, e.g., Section 11.6 of \cite{AB2}) that this model satisfies $\continuum = \mathrm{add}(\mathcal M) = \aleph_3$ and $\tower = \aleph_1$. Our third requirement is true in this model by an application of Theorem 4.1 of \cite{B&D}. According to this theorem, after $\w_2$ steps of our iteration, there will be a sequence $\mathcal A$ satisfying the conditions of $(3)$. However, it is easy to see that these conditions are absolute (provided $\w_2$ is absolute, which it is here). So the same sequence $\mathcal A$ will make $(3)$ true in the final model.

We now work within a model satisfying $(1) - (3)$ to reach the desired conclusion.

Let $\seq{A_\a}{\a < \w_2}$ be a sequence in $[\w]^\w$ such that $A_\a \sub^* A_\b$ and $A_\a \neq^* A_\b$ whenever $\a < \b < \w_2$. Let $\F$ be the (semi)filter generated by this sequence; explicitly,
$$\F = \bigcup_{\a \in \w_2}\set{X \in 2^\w}{A_\a \sub^* X}.$$
For any fixed infinite set $A$, $\set{X \in 2^\w}{A \sub^* X}$ is meager in $2^\w$. Since $\mathrm{add}(\mathcal M) = \continuum > \aleph_2$, $\F$ is meager.

To finish the proof, it suffices to show $\pseudo_\F = \aleph_2$. Clearly $\set{A_\a}{\a \in \w_2}$ is an unbounded chain in $\F$, so $\pseudo_\F \leq \tower_\F \leq \aleph_2$. For the opposite inequality, let $\set{B_\b}{\b \in \w_1}$ be centered in $\F$. By the definition of $\F$, for each $\b$, there is some $\a_\b$ such that $B_\b \sub^* A_{\a_\b}$. Letting $\a = \sup \set{a_\b}{\b \in \w_1}$, $A_\a$ is a bound for $\set{B_\b}{\b \in \w_1}$ in $\F$.
\end{proof}

\section{$P$-filters from $\dom = \continuum$}\label{sec:d=c}

If $f,g \in \w^\w$, we say $g$ \emph{dominates} $f$, and write $f \leq^* g$, whenever $\set{n \in \w}{f(n) \geq g(n)}$ is finite. The \emph{dominating number} $\dom$ is the smallest size of some $D \sub \w^\w$ such that every $f \in \w^\w$ is dominated by some $g \in D$.

In this section we show that if $\dom = \continuum$ then every $G_\delta$ semifilter admits a $P$-ultrafilter.

\begin{lemma}\label{lem:fastfunction}
Let $\S$ be a semifilter, and let $U$ be an open subset of $2^\w$ with $\S \sub U$. For each $X \in \S$, there is function $f_X: \w \to \w$ such that, for every $m \in \w$, $\[X \rest [m,f_X(m))\] \sub U$.
\end{lemma}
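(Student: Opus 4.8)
The plan is to fix $m \in \w$ and produce a single suitable value $f_X(m)$ by a compactness argument; repeating this for every $m$ then defines the function $f_X$.

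So fix $m$. For each $n > m$, let $C_n = \[X \rest [m,n)\]$. Each $C_n$ is clopen (hence compact), and the $C_n$ decrease as $n$ grows, since a larger interval imposes strictly more constraints. The crucial point is to identify their intersection. A point $Y$ lies in $\bigcap_{n > m} C_n$ precisely when $Y \cap [m,n) = X \cap [m,n)$ for all $n > m$, that is, when $Y$ and $X$ agree on all of $[m,\w)$ and so differ only within the finite set $[0,m)$. There are at most $2^m$ such $Y$, and each satisfies $X \sub^* Y$; since $X \in \S$ and $\S$ is upward closed in $\sub^*$, each such $Y$ lies in $\S$, and therefore in $U$. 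Hence $\bigcap_{n > m} C_n \sub U$.

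Now I invoke compactness. The complement $2^\w \setminus U$ is closed, hence compact, and the sets $C_n \cap (2^\w \setminus U)$ form a decreasing sequence of compact sets. By the previous paragraph their total intersection is empty, so by the finite intersection property some single set $C_n \cap (2^\w \setminus U)$ is already empty; equivalently, $\[X \rest [m,n)\] \sub U$ for some $n > m$. Define $f_X(m)$ to be the least such $n$.

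The only real content of the argument is the identification of the tail intersection $\bigcap_{n>m} C_n$ with the finite family of finite modifications of $X$; once that is in hand, the conclusion follows immediately from compactness together with the openness of $U$. I expect this identification to be the one step requiring care, as it is where the hypotheses $X \in \S$ and $\S \sub U$ are actually used. Note that nothing is needed about $\S$ beyond its closure under finite modifications, a property enjoyed by every semifilter.
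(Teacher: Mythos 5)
Your proof is correct and is essentially the paper's argument: both rest on the observation that the only points of $2^\w$ agreeing with $X$ on $[m,\w)$ are the (at most) $2^m$ finite modifications of $X$ on $[0,m)$, each of which lies in $\S \sub U$ by upward closure. The paper extracts the bound directly, choosing for each such modification $X_i$ a basic neighborhood $\[X_i \rest n_i\] \sub U$ and setting $f_X(m) = \max_i n_i$, whereas you package the same finiteness step as a compactness/finite-intersection argument on the decreasing clopen sets $C_n \cap (2^\w \setminus U)$; the two mechanisms are interchangeable here since the tail intersection is finite.
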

\begin{proof}
Fix $m \in \w$ and $X \in \S$. Let $\set{M_i}{i < 2^m}$ enumerate the subsets of $m$. For each $i < 2^m$, let $X_i = M_i \cup (X \setminus m)$ and let $n_i$ be the least natural number satisfying $\[X_i \rest n_i\] \sub U$. $X_i$ is a finite modification of $X \in \S$, so $X_i \in \S \sub U$; therefore some such $n_i$ must exist. Let $f_X(m) = \max \set{n_i}{i < 2^m}$. If $Y \in \[X \rest [m,f_X(m))\]$, then for some $i$ we have $Y \cap m = M_i$, which gives $Y \in \[X_i \rest f_X(m)\] \sub \[X_i \rest n_i\] \sub U$.
\end{proof}

The following lemma generalizes Proposition 1.3 in \cite{Ket}; see also Proposition 6.24 in \cite{AB2}.

\begin{lemma}\label{lem:extension}
Let $\G$ be a $G_\delta$ semifilter. Suppose $\set{A_n}{n \in \w}$ is a decreasing sequence in $\G$; also, suppose $\mathcal B \sub \G$, $|\mathcal B| < \dom$, and for each $B \in \mathcal B$, $\{B\} \cup \set{A_n}{n \in \w}$ is centered in $\G$. Then $\set{A_n}{n \in \w}$ has a bound $A \in \G$ such that, for any $B \in \mathcal B$, $A$ and $B$ have a common bound in $\G$.
\end{lemma}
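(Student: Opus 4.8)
The plan is to build the bound $A$ as a diagonal intersection controlled by a single dominating function, using Lemma~\ref{lem:fastfunction} to guarantee membership in $\G$. Since $\G$ is $G_\delta$, write $\G = \bigcap_{n \in \w} U_n$ with the $U_n$ open and decreasing. For each $U_n$ and each $X \in \G$, Lemma~\ref{lem:fastfunction} supplies a function $f_{X,n} : \w \to \w$ witnessing that short windows of $X$ keep us inside $U_n$. The idea is that if I thin out the sets $A_n$ along a sufficiently fast-growing sequence of ``blocks'' and take a union that agrees with $A_n$ on the $n$-th block onward, the resulting set $A$ will satisfy $A \sub^* A_n$ for every $n$ (hence is a lower bound for the decreasing sequence), and the block-by-block control from $f_{X,n}$ will force $A \in U_n$ for every $n$, so $A \in \G$.

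The role of $\dom = \continuum$ enters through the family $\mathcal B$. For each $B \in \mathcal B$, the hypothesis that $\{B\} \cup \set{A_n}{n \in \w}$ is centered means the finite intersections $B \cap A_n$ are all in $\G$; I want to be sure that my diagonalization does not destroy compatibility with $B$. The natural approach is to associate to each $B \in \mathcal B$ a function $g_B \in \w^\w$ encoding ``how fast I must grow my blocks so that $A$ still has infinite intersection with $B$ inside $\G$'' — essentially, $g_B$ records where the set $B \cap \bigcap_{m \le n} A_m$ has enough mass and keeps windows inside the relevant $U_n$. Since $\card{\mathcal B} < \dom$, the family $\set{g_B}{B \in \mathcal B}$ is not dominating, so there is a single $h \in \w^\w$ that is \emph{not} dominated by it, i.e. $h(k) > g_B(k)$ infinitely often for each $B$. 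Building $A$ so that its blocks grow according to $h$ then guarantees, for every $B \in \mathcal B$ simultaneously, infinitely many blocks on which $A$ and $B$ overlap in a $\G$-controlled way, which is exactly what yields a common bound for $A$ and $B$ in $\G$.

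Concretely I would define an increasing sequence $m_0 < m_1 < \cdots$ of block boundaries by recursion, at stage $k$ choosing $m_{k+1}$ large enough to dominate the finitely many relevant values $f_{A_k, j}(m_k)$ for $j \le k$ (so that the window $[m_k, m_{k+1})$ filled according to $A_k$ stays inside $U_j$), and also large enough relative to $h(k)$ to secure the overlap with the $B$'s. Then set $A = \bigcup_k \big(A_k \cap [m_k, m_{k+1})\big)$. The verification splits into three routine checks: $A \sub^* A_n$ for each $n$ (since past the $n$-th block $A$ is a subset of $A_n$ by the decreasing hypothesis); $A \in U_n$ for each $n$ (each window was chosen via $f_{A_k,n}$ to keep us in $U_n$, and $U_n$ is closed upward under $\sub^*$ so finitely many early bad blocks are harmless); and, for each $B \in \mathcal B$, that $A \cap B$ (suitably augmented) is a lower bound in $\G$ for $\{A, B\}$, which is where the non-domination of $h$ delivers infinitely many good blocks.

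The main obstacle I expect is the third task — arranging the \emph{common bound with every} $B \in \mathcal B$ at once. The membership $A \in \G$ and the relation $A \sub^* A_n$ are handled by a straightforward single diagonalization, but producing, for each of the $(<\dom)$-many sets $B$, a witness that $A$ and $B$ have a common lower bound in $\G$ requires that the single function $h$ governing the block growth be good for all the $g_B$ simultaneously. The delicate point is to correctly formulate $g_B$ so that ``$h$ escapes $g_B$ infinitely often'' translates into ``infinitely many blocks where $B \cap A$ is nonempty \emph{and} the accumulated overlap stays inside the open sets $U_n$,'' so that the common bound can be taken to be essentially $A \cap B$ (modulo finitely many points). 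Getting this bookkeeping right, so that one non-dominated $h$ handles the whole family $\mathcal B$ without needing to dominate it, is the crux of the argument.
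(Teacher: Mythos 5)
You have the right skeleton---Lemma~\ref{lem:fastfunction}, a function $g_B$ for each $B \in \B$, a single $h$ that escapes every $g_B$ (this is where $\card{\B} < \dom$ enters), and a diagonal union---and this is indeed the strategy of the paper's proof. But the ``bookkeeping'' you defer at the end is a genuine gap, and your concrete implementation is circular: your block boundaries $m_k$ are chosen ``large enough relative to $h(k)$,'' while $g_B$ must be defined from the blocks (it has to record values like $f_{A_k \cap B}(m_k)$, since what must stay inside $U_j$ on a good block is the trace of $A_k \cap B$, not of $A_k$). So $g_B$ depends on the $m_k$'s, the $m_k$'s depend on $h$, and $h$ depends on the $g_B$'s. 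There is a second problem: with a partition into blocks, exact agreement of $A \cap B$ with $A_k \cap B$ on the window $[m_k, f_{A_k \cap B}(m_k))$ fails as soon as that window crosses $m_{k+1}$ (beyond it $A$ follows $A_{k+1}$, so $A \cap B$ is merely a \emph{subset} of $A_k \cap B$ there), and Lemma~\ref{lem:fastfunction} requires exact agreement on the window. Your appeal to ``$U_n$ is closed upward under $\sub^*$'' is unjustified: only $\G$ is upward closed, while the $U_n$ are arbitrary open supersets of $\G$. What actually makes early blocks harmless is that the window $\[X \rest [m,f_X(m))\]$ constrains no coordinates below $m$---that is the point of the $2^m$-fold maximum in the construction of $f_X$.

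The paper breaks the circle by dispensing with a block recursion altogether. It sets $g_B(n) = f_{A_n \cap B}(n)$, where the window starts at $n$ itself and $f_{A_n \cap B}$ is aimed at $U_n$; thus $g_B$ is defined before $h$ exists. It then takes the overlapping union of initial segments $A = \bigcup_{n \in \w}(A_n \cap h(n))$, which satisfies $A \sub^* A_n$ for all $n$ since the $A_n$ decrease. The witness for a given $B$ is chosen \emph{a posteriori and separately for each} $B$: pick $n_i$ with $g_B(n_i) < h(n_i) < n_{i+1}$ (possible because $h$ escapes $g_B$ infinitely often) and let $\tilde A = \bigcup_{i \in \w}([n_i,h(n_i)) \cap B \cap A_{n_i}) \sub A \cap B$. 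Disjointness of the intervals $[n_i,h(n_i))$ gives exact agreement of $\tilde A$ with $A_{n_i} \cap B$ on $[n_i, g_B(n_i))$, hence $\tilde A \in U_{n_i}$ for every $i$, and since the $U_n$ decrease, membership in infinitely many of them yields $\tilde A \in \bigcap_{n \in \w} U_n = \G$. (By upward closure of $\G$, your hope that the common bound is essentially $A \cap B$ then does come true, since $\tilde A \sub A \cap B$.) The essential observation your proposal is missing is that nothing about the witness $\tilde A$ has to be uniform in $B$---only $A$ does---so no single block structure ever needs to accommodate all the functions $f_{A_k \cap B}$ at once.
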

\begin{proof}
Replacing $A_n$ with $\bigcap_{m \leq n}A_m$ if necessary, we may assume that the $A_n$ are decreasing. This only changes $A_n$ finitely, and does not affect the other hypotheses or the conclusion of the lemma. Let $\seq{U_n}{n < \w}$, be a sequence of open subsets of $2^\w$ with $\G = \bigcap_{n \in \w}U_n$. Replacing $U_n$ with $\bigcap_{m \leq n}U_m$ if necessary, we may assume that the $U_n$ are also decreasing.

For each $B \in \mathcal B$, let $g_B(n) = f_{A_n \cap B}(n)$, where $f_{A_n \cap B}$ is the function described in Lemma~\ref{lem:fastfunction}. This is well-defined since $A_n \cap B \in \G$ (because $\set{A_n}{n \in \w} \cup \{B\}$ is centered in $\G$). As $|\mathcal B| < \dom$, there is some $h \in \w^\w$ that is not dominated by any $g_B$.

Let $A = \bigcup_{n \in \w}(A_n \cap h(n))$. We claim that this $A$ satisfies the conclusions of the lemma. There are two things to check: that $A$ is a bound for $\set{A_n}{n \in \w}$, and that $A$ and $B$ have a common bound in $\G$ for any $B \in \mathcal B$.

Because the $A_n$ are decreasing, $A \setminus A_n \sub \bigcup_{m < n}(A_m \cap h(m))$, which is a finite set. Thus $A \sub^* A_n$ for every $n$ and $A$ is a bound for $\set{A_n}{n \in \w}$.

It remains to show that for any $B \in \B$, $A$ and $B$ have a common lower bound in $\G$.

Since $h$ is not dominated by $g_B$, there is an infinite $C \sub \w$ such that $g_B(n) < h(n)$ for all $n \in C$. By induction, find an infinite increasing sequence $\seq{n_i}{i \in \w}$ of elements of $C$ such that, for each $i$, $h(n_i) < n_{i+1}$. Then put $\tilde A = \bigcup_{i \in \w}([n_i,h(n_i)) \cap B \cap A_{n_i})$. By our requirements on the $n_i$, the intervals $[n_i,h(n_i))$ are disjoint. Clearly, $\tilde A \sub A \cap B$, and it remains to show $\tilde A \in \G$. For any $i$, we have $f_{A_{n_i} \cap B}(n_i) = g_B(n_i) < h(n_i)$, so that $\tilde A \cap [n_i,f_{A_{n_i} \cap B}(n_i)) = A_{n_i} \cap B \cap [n_i,f_{A_{n_i} \cap B}(n_i))$. By the definition of the function $f_{A_{n_i} \cap B}$, this means $\tilde A \sub U_{n_i}$. This is true for all the $n_i$, so $\tilde A$ is in infinitely many of the $U_m$. As the $U_m$ are decreasing, $\tilde A \in \bigcap_{m \in \w}U_m = \G$.
\end{proof}

The following theorem extends to $G_\delta$ semifilters a classical result of Ketonen about $[\w]^{\w}$ (see Theorem 1.2 and Proposition 1.4 in \cite{Ket} or Theorem 9.25 in \cite{AB2}). It also extends Theorem 3.5 of \cite{Brn}, where similar conclusions are reached for the semifilter of thick sets using the hypothesis $\pseudo = \continuum$.

\begin{theorem}\label{thm:d=c}
Let $\G$ be a $G_\delta$ semifilter. If $\dom = \continuum$, then there is a $P$-ultrafilter on $\G$. In fact,
\begin{enumerate}
\item If $\dom = \continuum$, then every filter on $\G$ that is generated by fewer than $\continuum$ sets is included in some $P$-ultrafilter on $\G$.
\item Every ultrafilter on $\G$ that is generated by fewer than $\dom$ sets is a $P$-ultrafilter.
\end{enumerate}
\end{theorem}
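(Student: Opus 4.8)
The plan is to prove the theorem by building a $P$-ultrafilter through a transfinite recursion of length $\continuum$, using Lemma~\ref{lem:extension} as the engine at successor stages to ensure the $P$-property while simultaneously deciding membership questions to guarantee maximality. The three assertions are closely related: I would prove part (1) in full and then observe that the unnumbered first sentence is the special case $\mathcal B = \{\0\}$ (or rather the filter generated by $\G$ itself), while part (2) is essentially the ``easy direction'' that does not need $\dom = \continuum$ at all.

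\emph{Part (1).} Suppose $\dom = \continuum$ and let $\F_0$ be a filter on $\G$ generated by a base of size $< \continuum$. I would enumerate all $X \in \G$ as $\seq{X_\xi}{\xi < \continuum}$ and all countable subsets of $\G$ as $\seq{\mathcal D_\xi}{\xi < \continuum}$ (this enumeration has length $\continuum$ since there are $\continuum$-many countable subsets of $2^\w$). Then I would recursively construct an increasing chain of filters $\seq{\F_\xi}{\xi < \continuum}$ on $\G$, each generated by fewer than $\continuum$ sets, maintaining two commitments at stage $\xi$: first, decide $X_\xi$ by putting either $X_\xi$ or $\w \setminus X_\xi$ (whichever keeps the family centered in $\G$) into $\F_{\xi+1}$, which secures that $\bigcup_\xi \F_\xi$ is an ultrafilter on $\G$; second, if $\mathcal D_\xi \sub \F_\xi$ is a decreasing sequence, use Lemma~\ref{lem:extension} to find a pseudo-intersection $A \in \G$ of $\mathcal D_\xi$ that is compatible with every generator of $\F_\xi$, and add $A$ to the filter. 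This second step is exactly what forces the resulting ultrafilter to be a $P$-filter: every countable subfamily of the final ultrafilter $\F = \bigcup_{\xi < \continuum}\F_\xi$ appears as some $\mathcal D_\xi$ (after replacing it by its sequence of finite intersections, which has the same pseudo-intersections), and hence has a pseudo-intersection inside $\F$.

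The main obstacle, and the place where the hypotheses of Lemma~\ref{lem:extension} must be verified carefully, is the compatibility bookkeeping at the second commitment. To apply the lemma at stage $\xi$ with $\mathcal D_\xi = \set{A_n}{n \in \w}$, I must take $\mathcal B$ to be a generating set for $\F_\xi$, check that $|\mathcal B| < \continuum = \dom$, and verify that $\{B\} \cup \set{A_n}{n \in \w}$ is centered in $\G$ for each $B \in \mathcal B$ --- this last fact holds because both the $A_n$ and $B$ already lie in the filter $\F_\xi$, so any finite subfamily has a common lower bound in $\G$. The lemma then yields $A \in \G$ bounding all the $A_n$ and having a common lower bound with each $B$, which is precisely the statement that $\F_\xi \cup \{A\}$ generates a filter on $\G$; this is what lets the recursion continue. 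One must also confirm that the requirement $|\mathcal B| < \dom$ is preserved: at each stage we add at most countably many new generators (one from the decision step, one pseudo-intersection), and at limit stages of cofinality $< \continuum$ the bound is maintained, so every $\F_\xi$ is generated by $< \continuum = \dom$ sets as required.

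\emph{Part (2).} This direction is a direct application of Lemma~\ref{lem:extension} without any recursion. Let $\F$ be an ultrafilter on $\G$ with a generating base $\mathcal B_0$ of size $< \dom$, and let $\set{A_n}{n \in \w} \sub \F$ be an arbitrary countable subfamily; I must produce a pseudo-intersection of it lying in $\F$. Passing to finite intersections makes the sequence decreasing, and every $\{B\} \cup \set{A_n}{n \in \w}$ with $B \in \mathcal B_0$ is centered in $\G$ since all sets involved belong to the filter $\F$. Applying Lemma~\ref{lem:extension} with $\mathcal B = \mathcal B_0$ gives a bound $A \in \G$ of the $A_n$ that is compatible with every element of $\mathcal B_0$, hence compatible with all of $\F$; by maximality of $\F$ this forces $A \in \F$, and $A$ is the desired pseudo-intersection. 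Thus $\hat{\F}$ is a $P$-set and $\F$ is a $P$-ultrafilter.
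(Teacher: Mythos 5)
Your overall architecture---a length-$\continuum$ recursion powered by Lemma~\ref{lem:extension}, with part (2) as a one-shot application of that lemma---is exactly the paper's, and your part (2) is correct essentially verbatim. But the maximality step of part (1) contains a genuine error: you decide each $X_\xi \in \G$ by adding \emph{either} $X_\xi$ \emph{or} $\w \setminus X_\xi$, ``whichever keeps the family centered in $\G$.'' For a general semifilter there is no such dichotomy. First, $\w \setminus X_\xi$ need not lie in $\G$ at all, and a filter on $\G$ is by definition a subset of $\G$. Second, neither alternative may preserve centeredness: take $\G = \thick$, let $F$ be a union of sparsely placed intervals of increasing length (such an $F$ may well generate the initial filter $\F_0$, which part (1) allows), and let $X = (\w \setminus F) \cup E$, where $E$ is the set of even numbers belonging to $F$. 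Then $X$ is thick, but $F \cap X = E$ and $F \setminus X = F \setminus E$ contain no two consecutive integers, so neither is thick; indeed $\w \setminus X = F \setminus E$ is not even a member of $\thick$. And the failure is not incidental: if complement-deciding were always possible, every ultrafilter on $\thick$ would be an ultrafilter on $[\w]^{\w}$, contradicting Lemma~\ref{lem:thicksets}, since minimal left ideals of $\w^*$ are not singletons. Maximality of a filter on $\G$ means only that for every $X \in \G$, either $X \in \F$ or $\F \cup \{X\}$ is not centered in $\G$. So the correct step is one-sided, and it is how the paper proceeds: if $\F_\xi \cup \{X_\xi\}$ is centered in $\G$, force $X_\xi$ in (the paper feeds the constant sequence $X_\xi, X_\xi, \dots$ through Lemma~\ref{lem:extension} and adds the resulting bound, which is $\sub^* X_\xi$, so $X_\xi$ enters the generated filter); if it is not centered, do nothing, since then no filter on $\G$ extending $\F_\xi$ can contain $X_\xi$.

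There is a second, smaller gap in your bookkeeping for the $P$-property. You act on $\mathcal D_\xi$ only when $\mathcal D_\xi \sub \F_\xi$, and each countable family receives a single index. A decreasing sequence $\mathcal D \sub \F$ may be indexed at a stage $\xi$ before its members have entered the construction; the step is then skipped and never revisited, so your closing claim that ``every countable subfamily of the final ultrafilter appears as some $\mathcal D_\xi$'' does not by itself secure a pseudo-intersection inside $\F$. The paper's formulation avoids this: at stage $\a$ it applies Lemma~\ref{lem:extension} whenever $\F_\a \cup \set{X_n^\a}{n \in \w}$ is merely \emph{centered} in $\G$, membership in $\F_\a$ not being required. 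Any decreasing sequence contained in the final $\F$ is, together with $\F_\a$, a subset of the centered set $\F$, so it was handled at its own stage and its pseudo-intersection lies in $\F$. (This centered-then-act formulation is also what makes the constant-sequence trick above deliver maximality.) Alternatively, you could keep your formulation and list every countable family cofinally often, using $\cf(\continuum) > \w$ to find an occurrence after all of its members have appeared. With these two repairs your proposal coincides with the paper's proof.
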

\begin{proof}
For $(1)$, let $\F_0$ be a basis for a filter on $\G$ with $\card{\F_0} < \dom$ and let $\set{\seq{X_n^\a}{n < \w}}{\a \in \continuum}$ be an enumeration of all countable decreasing sequences in $\G$. To avoid trivialities, we assume $\F_0$ contains the Fr\'echet filter. We construct by recursion an increasing sequence of filter bases $\seq{\F_\a}{\a < \continuum}$, such that $\card{\F_\a} = \aleph_0 \cdot \card{\a}$.

For limit $\a$, we put $\F_\a = \bigcup_{\b < \a}\F_\b$. Given that $\F_\a$ has already been constructed, we obtain $\F_{\a+1}$ as follows. If there is some $n < \w$ such that $\F_\a \cup \{X_n^\a\}$ is not centered in $\G$, then we put $\F_{\a+1} = \F_\a$. If this is not the case, $\F_\a \cup \set{X_n^\a}{n \in \w}$ is centered in $\G$. Since $\card{\F_\a} < \dom$, we may apply Lemma~\ref{lem:extension} to find a lower bound $X^\a_\w$ for $\seq{X_n^\a}{n < \w}$ such that, for any $B \in \F_\a$, we have $X_\w^\a \cap B \in \G$. In particular, $\F_\a \cup \set{X_\w^\a \cap B}{B \in \F_\a}$ is a filter base, and we define this to be $\F_{\a+1}$. Clearly $\card{\F_{\a+1}} = \aleph_0 \cdot \card{\F_\a}$, and this completes the recursive construction. Let $\F$ be the filter generated by $\bigcup_{\a < \continuum}\F_\a$.

We must prove that $\F$ is a $P$-ultrafilter on $\G$. It is obvious that $\bigcup_{\a < \continuum}\F_\a$ is a filter basis in $\G$, so $\F$ is a filter in $\G$. To see that $\F$ is an ultrafilter in $\G$, let $A \in \G$. There is some $\a < \continuum$ such that $\seq{X^\a_n}{n < \w}$ is the constant sequence $X_n^\a = A$. If $\{A\} \cup \F$ is centered, so is $\{A\} \cup \F_\a$, and at step $\a$ of our construction we found a set $X_\w^\a \sub^* A$ and put $X_\w^\a \in \F_{\a+1}$. This implies $A \in \F$ (recall that $\F$ contains the Fr\'echet filter). Thus, for every $A \in \G$, either $A \in \F$ or $\{A\} \cup \F$ is not centered in $\G$. Hence $\F$ is an ultrafilter on $\G$.

To see that $\F$ is a $P$-filter, let $\seq{A_n}{n < \w}$ be a decreasing sequence of elements of $\F$. For some $\a$, we have $X_n^\a = A_n$ for all $n$. At stage $\a$ of our construction, we added some set $X_\w^\a$ to $\F$ that is a lower bound for this sequence.

For $(2)$, let $\F$ be an ultrafilter on $\G$, and let $\B$ be a basis for $\F$ such that $\card{\B} < \dom$. If $\seq{A_n}{n < \w}$ is a decreasing sequence in $\F$, then $\set{A_n}{n \in \w} \cup \B$ is centered. By Lemma~\ref{lem:extension}, there is some $A_\w \in \G$ that is a lower bound for $\set{A_n}{n < \w}$ and that, for every $B \in \B$, satisfies $A_\w \cap B \in \G$. Then $\{A_\w\} \cup \B$ is centered in $\G$. Since $\B$ is a basis for the ultrafilter $\F$ on $\G$, this implies $A_\w \in \F$. Thus an arbitrary decreasing sequence of elements of $\F$ has a lower bound in $\F$; i.e., $\F$ is a $P$-filter.
\end{proof}

We end this section by observing that Theorem~\ref{thm:d=c} does not hold for all co-meager semifilters.

\begin{proposition}
There is a co-meager semifilter $\S$ such that there cannot be a $P$-ultrafilter on $\S$.
\end{proposition}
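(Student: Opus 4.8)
The plan is to construct a co-meager semifilter that is so ``thin'' in a structured way that no ultrafilter on it can be a $P$-filter. The natural strategy is to take a co-meager semifilter $\S$ whose ultrafilters are forced, by the very structure of $\S$, to contain a decreasing $\w$-sequence with no lower bound in $\S$. Remarks~\ref{rem:1} and~\ref{rem:2} already exhibit co-meager (indeed $\Delta^0_3$) semifilters in which the sequence $\seq{\set{m \cdot 2^n}{m \in \w}}{n \in \w}$ is a decreasing unbounded sequence, and this suggests using such a sequence as the obstruction. The key idea is that if an ultrafilter $\F$ on $\S$ must contain all the sets $S_n = \set{m \cdot 2^n}{m \in \w}$ (or something like them), then $\F$ contains a decreasing sequence whose only candidate lower bounds have been deliberately excluded from $\S$, so $\F$ cannot be a $P$-filter.

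First I would fix a decreasing sequence $\seq{S_n}{n \in \w}$ of infinite sets with empty (mod finite) intersection, chosen so that their ``limit behavior'' is controlled; the sets $S_n = \set{m \cdot 2^n}{m \in \w}$ work, since $S_n \supseteq^* S_{n+1}$ and $\bigcap_n S_n =^* \0$. Then I would build $\S$ to be the upward $\sub^*$-closure of a family designed to guarantee two things simultaneously: $(\mathrm{a})$ that $\S$ is co-meager, and $(\mathrm{b})$ that each $S_n \in \S$ but no infinite $A$ with $A \sub^* S_n$ for all $n$ lies in $\S$. For co-meagerness, by Proposition~\ref{prop:talagrand} it suffices to ensure $\S \geq_{RB} [\w]^\w$, i.e. to exhibit a finite-to-one $f$ with $f^{-1}[A] \in \S$ for every infinite $A$; this can be arranged by letting $\S$ contain all preimages $f^{-1}[A]$ under a suitable $f$ while still omitting the pseudo-intersections of the $S_n$. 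The delicate part is reconciling $(\mathrm{b})$ with upward $\sub^*$-closure, since any pseudo-intersection $A$ of the $S_n$ has supersets that must be kept in $\S$; the point is only to exclude $A$ itself and other sets lying below every $S_n$, not their large supersets.

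Next I would verify the key claim: \emph{every} ultrafilter $\F$ on $\S$ contains cofinally many of the $S_n$ (in fact all of them, after intersecting with the generators of $\S$), so that $\F$ contains a decreasing sequence with no lower bound in $\S$, whence $\F$ is not a $P$-filter. Here one uses that the $S_n$ are, up to finite modification, the largest ``descending'' witnesses available, and that any ultrafilter extending the filter-base structure of $\S$ is compelled to decide these sets positively. Concretely, if some $S_n \notin \F$, one shows the complementary choice is incompatible with $\F$ being a filter on $\S$, because $\S$ was built to make the $S_n$ (or appropriate finite-to-one liftings of them) unavoidable in any maximal centered subfamily.

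The main obstacle I anticipate is the tension in step two between making $\S$ co-meager (which by Talagrand's criterion requires $\S$ to contain the $f$-preimage of \emph{every} infinite set, a very rich requirement) and simultaneously forcing \emph{every} ultrafilter on $\S$ to contain the unbounded decreasing sequence. A co-meager semifilter is large, so it admits many ultrafilters, and one must ensure the obstruction persists across all of them rather than just some. The resolution is to engineer the reduction $f$ and the sequence $\seq{S_n}{n}$ together, so that the $S_n$ appear as $f$-liftings of a fixed decreasing sequence in $[\w]^\w$ that every ultrafilter on $[\w]^\w$ must eventually respect; then the Rudin-Blass structure transports both the co-meagerness and the unavoidable descending chain at once. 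Checking that the resulting $\S$ genuinely omits all pseudo-intersections while remaining upward-closed and co-meager is the technical heart of the argument.
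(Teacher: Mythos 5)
Your central mechanism is sound and, amusingly, it is exactly what is hiding inside the paper's proof: the paper takes $\S$ to be the semifilter of IP sets, notes it is co-meager (Corollary~\ref{cor:IPsets}: it is analytic, hence has the Baire property, hence is meager or co-meager by Proposition~\ref{prop:bairemeager}, and cannot be meager since by Lemma~\ref{lem:glasner} it contains an ultrafilter), and then cites the known fact that no $P$-point can consist entirely of IP sets. Unwinding that fact via Proposition~\ref{prop:notIPorcentral}, the obstruction is precisely your sequence $S_n = \set{m \cdot 2^n}{m \in \w}$: each $S_n$ is IP, its complement is a finite union of the non-IP sets $A_m = \set{2^m(2k+1)}{k \in \w}$, so every ultrafilter on $\S$ --- which by the Ramsey property and Lemma~\ref{lem:glasner} is a genuine ultrafilter all of whose members are IP --- must contain every $S_n$, while Proposition~\ref{prop:notIPorcentral} shows that no pseudo-intersection of the $S_n$ is an IP set. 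The paper thus obtains your three desiderata by pointing at a known object rather than building one.

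As written, however, your proposal has genuine gaps, and they sit exactly at the two places you flag with ``this can be arranged'' and ``one shows.'' First, you never define $\S$, and the specific route you sketch --- taking the upward closure of the preimages $f^{-1}[A]$ under a finite-to-one $f$ --- cannot work by itself: it essentially produces the $G_\delta$ semifilter of Corollary~\ref{cor:comeagers}, and by Theorem~\ref{thm:M&S} every $G_\delta$ semifilter $\G$ satisfies $\pseudo_\G = \pseudo > \aleph_0$, so \emph{every} countable decreasing sequence in $\G$ has a lower bound in $\G$. Hence your unbounded chain must be witnessed by the non-$G_\delta$ part of $\S$, and nothing in your sketch supplies those extra sets. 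Second, the claim that every ultrafilter on $\S$ contains each $S_n$ requires a criterion you never isolate: $S$ belongs to every ultrafilter on $\S$ if and only if $X \cap S \in \S$ for every $X \in \S$ (if $X \cap S_n \notin \S$ for some $X \in \S$, extend $\{X\}$ to an ultrafilter, which then omits $S_n$; conversely this absorption property makes $\F \cup \{S\}$ centered in $\S$ for any filter $\F$ on $\S$, so maximality forces $S \in \F$). Arranging this absorption for all $n$ simultaneously with co-meagerness and with the exclusion of all pseudo-intersections is the entire technical content, which the IP semifilter delivers for free via Hindman's theorem. So your text is a completable program --- IP sets complete it, and one can also complete it by hand with a block construction built over an interval partition --- but it is not yet a proof.
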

\begin{proof}
The semifilter in question is the semifilter $\S$ of IP sets (which is defined in Section~\ref{sec:applications}). We will show in the proof of Corollary~\ref{cor:IPsets} that $\S$ is co-meager, and we will show in Lemma~\ref{lem:glasner} that every ultrafilter on $\S$ is also an ultrafilter on $[\w]^{<\w}$. The current proposition now follows from the well-known fact that no $P$-point in $\w^*$ can consist entirely of IP sets (this is implied, for example, by Corollary 8.37 in \cite{H&S}).
\end{proof}

\section{Weak $P$-filters from ZFC}\label{sec:weakpsets}

In this section we show that there are weak P-ultrafilters on any co-meager
semifilter, in particular on the semifilter of thick sets. This will then
be used in Section~\ref{sec:applications}. 

To construct an ultrafilter with nice combinatorial properties, one usually 
uses a recursive construction. We first divide the combinatorial property into
a large set of requirements and then at step \(\alpha\) of the construction 
the filter constructed so far is extended in such a way that 
\(\alpha\)-th requirement is met. There are two main problems. 

The first is that the recursive construction might stop before all of the
requirements are met. To avoid this problem, one can use an idea going back
to Posp\'i\v{s}il (\cite{Pop}): start with an independent system and make sure 
that at each step a large enough part of this system remains independent modulo
the filter constructed so far. This guarantees, in particular, that the
filter is not an ultrafilter. Since there are independent systems of size 
\(\mathfrak c\) this typically allows one to take care of \(\mathfrak c\)-many
requirements. 

The second problem is coming up with the requirements.
For example, to get a weak P-point, the natural requirement would be given
by a single countable sequence of ultrafilters and it would require that the
constructed ultrafilter is not in the closure of this sequence. Unfortunately,
there are far too many countable sequences of ultrafilters --- we would need
to meet \(2^{\mathfrak c}\)-many requirements, but our construction only has
\(\mathfrak c\)-many steps. To overcome \emph{this} problem Kunen 
(\cite{Kun}) did something counterintuitive: he replaced the easier
problem of constructing a weak P-point by a harder problem of 
constructing \(\mathfrak c\)-O.K. points. The clever part was that the 
combinatorial property of being a \(\mathfrak c\)-O.K. point, while decidedly 
uglier, can actually be divided into \(\mathfrak c\)-many requirements
thus leaving hope for our recursive construction. 

We modify Kunen's proof and show that it allows us to construct 
weak P-ultrafilters on semifilters. First we need Kunen's definition (see \cite{Kun})
of an O.K.-set: A (closed) subset  \(X\subseteq\omega^*\) is \(\kappa\)-O.K. if for each sequence \(\langle U_n:n<\omega\rangle\) of open
neigbourhoods of \(X\) there is a family \(\{V_\gamma:\gamma<\kappa\}\) of neigbourhoods of \(X\) such that for each finite
\(\Gamma\in[\kappa]^{<\omega}\) the following is true:
$$ \bigcap_{\gamma\in \Gamma} V_\gamma\subseteq U_{|\Gamma|} $$

Note that if $\k \leq \lambda$, then every $\lambda$-O.K. set is also $\k$-O.K. The following lemma, also due to Kunen, shows that $\k$-O.K. sets (with $\k$ uncountable) are weak $P$-sets: 

\begin{lemma}\label{lem:okweak}
A closed  \(\omega_1\)-O.K. set is a weak P-set.
\end{lemma}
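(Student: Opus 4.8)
The plan is to show directly that, for any countable $D \sub \w^* \setminus X$, the O.K. family attached to a well-chosen sequence of neighborhoods contains a single neighborhood of $X$ that misses $D$ entirely. This immediately yields $\overline D \cap X = \0$, which is exactly the weak $P$-set condition.

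First I would enumerate $D = \set{d_n}{n < \w}$. Since $X$ is closed, each $d_n \notin X$, and $\w^*$ is compact Hausdorff (indeed zero-dimensional), so for each $n \geq 1$ I can choose a clopen neighborhood $U_n \supseteq X$ with $d_{n-1} \notin U_n$; I also set $U_0 = \w^*$. (I set $U_0 = \w^*$ and shift the exclusions by one so that the empty-set case $\Gamma = \0$, for which the intersection is all of $\w^*$, is harmless, while still guaranteeing that each $d_m$ is excluded by $U_{m+1}$; without this shift one could not control the membership of $d_0$.) Applying the $\w_1$-O.K. property of $X$ to the sequence $\seq{U_n}{n < \w}$ produces neighborhoods $\set{V_\gamma}{\gamma < \w_1}$ of $X$ satisfying $\bigcap_{\gamma \in \Gamma}V_\gamma \sub U_{|\Gamma|}$ for every finite $\Gamma \sub \w_1$.

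The heart of the argument is a counting estimate. For each $m < \w$, set $S_m = \set{\gamma < \w_1}{d_m \in V_\gamma}$. If $S_m$ had at least $m+1$ elements, I could choose $\Gamma \sub S_m$ with $\card{\Gamma} = m+1$; then $d_m \in \bigcap_{\gamma \in \Gamma}V_\gamma \sub U_{m+1}$, contradicting $d_{m} \notin U_{m+1}$. Hence each $S_m$ is finite, so $\bigcup_{m < \w}S_m$ is a countable union of finite sets and is therefore countable. As $\w_1$ is uncountable, there is some $\gamma^* < \w_1$ lying outside every $S_m$, which means $d_m \notin V_{\gamma^*}$ for all $m$, i.e. $V_{\gamma^*} \cap D = \0$. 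Now $V_{\gamma^*}$ is an open neighborhood of $X$ disjoint from $D$, so every point of $X$ has a neighborhood missing $D$ and thus cannot belong to $\overline D$; that is, $\overline D \cap X = \0$. Since $D$ was arbitrary, $X$ is a weak $P$-set.

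I expect the counting estimate to be the only delicate point. Its role is to convert the statement ``$d_m$ lies in many of the $V_\gamma$'' into ``$d_m$ lies in $U_{m+1}$,'' using precisely the $\card{\Gamma}$-dependence built into the O.K. definition; the uncountability of $\w_1$ against the countability of $D$ is then exactly what lets a single $V_{\gamma^*}$ escape all of $D$. This also explains why the lemma needs $X$ to be $\w_1$-O.K. rather than merely $\w$-O.K.
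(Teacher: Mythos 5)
Your proof is correct and follows essentially the same route as the paper's: choose neighborhoods $U_n$ of $X$ excluding the points of $D$ at the appropriate indices, apply the $\omega_1$-O.K. property, observe that each $d_m$ can lie in only finitely many of the $V_\gamma$ (the paper bounds this by $n+1$; you bound $\card{S_m}$ by $m$ via the same $\card{\Gamma}$-dependence), and use the uncountability of $\omega_1$ to find a single $V_{\gamma^*}$ disjoint from $D$. Your shift $U_0 = \w^*$ tidily handles the $\Gamma = \0$ case of the O.K. definition, and your avoidance of the paper's monotonicity assumption on the $U_n$ is a harmless cosmetic variation.
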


\begin{proof}
Let  \(F\) be a closed \(\omega_1\)-O.K. set and \(D=\{p_n:n<\omega\}\) a countable set disjoint from \(F\). Fix a descending
sequence of neigbourhoods \(\langle U_n:n<\omega\rangle\) of \(F\) such that \(p_n\) is not contained in any \(U_m\) 
for \(m>n\). For this sequence, choose open neigbourhoods \(\{V_\alpha:\alpha<\omega_1\}\) of \(F\) witnessing that it 
is \(\omega_1\)-O.K. It is easy to see that \(p_n\) can only be an element of at most \(n+1\) many \(V_\alpha\)s, so we can fix
\(\alpha_n<\omega_1\) such that \(p_n\) is not contained in any \(V_\beta\) for \(\beta\geq\alpha_n\). Let
\(\beta=\sup\{\alpha_n:n<\omega\}<\omega_1\). Then \(V_\beta\) is a neigbourhood of \(F\) disjoint from \(D\).
\end{proof}

\begin{remark}
Later van Mill (\cite{vM2}) generalized this lemma to show that a closed 
\(\omega_1\)-O.K. subset of \(X^*=\beta X\setminus X\) for any locally
compact \(\sigma\)-compact \(X\) is actually disjoint even from the closure of any
\emph{ccc} subset of \(X^*\) disjoint from it.
\end{remark}

\begin{lemma}
A filter  \(\mathcal F\) on \(\omega\) is \(2^\omega\)-O.K. if for each sequence \(\langle F_n:n<\omega\rangle\) of elements of
\(\mathcal F\) there are \(\{V_\gamma:\gamma<2^\omega\}\subseteq\mathcal F\) such that for each \(n<\omega\) and
\(\gamma_1,\ldots,\gamma_n<2^\omega\) the set
$$ V_{\gamma_1}\cap \cdots\cap V_{\gamma_n}\setminus F_n $$
is finite.
\end{lemma}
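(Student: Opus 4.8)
The plan is to read the statement through Stone duality: to say that the filter $\F$ is $2^\w$-O.K. means that its Stone dual $\hat\F = \bigcap_{A \in \F} A^*$ is a $2^\w$-O.K. subset of $\w^*$ in the topological sense defined above, and the lemma asserts that the displayed combinatorial condition is \emph{sufficient} for this. The two facts from Stone duality that drive everything are that $A^* \cap B^* = (A \cap B)^*$ and that $A^* \sub B^*$ precisely when $A \sub^* B$; together with the observation that, since $\w^*$ is compact and the sets $A^*$ (for $A \in \F$) are clopen, downward directed, and have intersection $\hat\F$, the family $\set{A^*}{A \in \F}$ is a neighbourhood base for $\hat\F$. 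In particular $A^*$ is a neighbourhood of $\hat\F$ if and only if $A \in \F$.

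With this dictionary in place, the argument is a direct translation. Supposing the combinatorial condition holds, I would let $\seq{U_n}{n < \w}$ be an arbitrary sequence of open neighbourhoods of $\hat\F$. Using the neighbourhood base just described, for each $n$ choose $F_n \in \F$ with $F_n^* \sub U_n$. Feeding the sequence $\seq{F_n}{n<\w}$ into the hypothesis produces sets $\set{V_\g}{\g < \continuum} \sub \F$ with the stated mod-finite property, and the natural candidates for the topological witnesses are the clopen neighbourhoods $V_\g^*$ of $\hat\F$.

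It then remains to verify the defining inequality of a $2^\w$-O.K. set. Given a finite $\Gamma = \{\g_1, \dots, \g_n\} \in [\continuum]^{<\w}$ with $\card{\Gamma} = n$, repeated use of $A^* \cap B^* = (A \cap B)^*$ gives $\bigcap_{\g \in \Gamma} V_\g^* = (V_{\g_1} \cap \dots \cap V_{\g_n})^*$. The hypothesis says $V_{\g_1} \cap \dots \cap V_{\g_n} \setminus F_n$ is finite, i.e.\ $V_{\g_1} \cap \dots \cap V_{\g_n} \sub^* F_n$, so by the second duality fact $(V_{\g_1} \cap \dots \cap V_{\g_n})^* \sub F_n^* \sub U_n = U_{\card{\Gamma}}$, which is exactly what is required. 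The only point demanding any care is bookkeeping rather than mathematics: one must match the number of factors in the intersection to the index on $F_n$ and $U_n$ (this is precisely the feature that makes the O.K.\ condition usable), and one should read the indices $\g_1, \dots, \g_n$ in the hypothesis as distinct, matching $\card{\Gamma} = n$. I do not expect any genuine obstacle here; the content of the lemma is the translation itself, and once the Stone-duality dictionary is set up the verification is mechanical (and the converse implication, should one want it, follows from the same dictionary after shrinking each topological witness to a basic clopen neighbourhood).
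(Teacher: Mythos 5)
Your proof is correct, and it is exactly the intended argument: the paper states this lemma without proof, treating the Stone-duality translation (the sets $A^*$ for $A \in \F$ form a neighbourhood base of $\hat\F$ by compactness, $A^* \cap B^* = (A\cap B)^*$, and $A \sub^* B$ iff $A^* \sub B^*$) as routine. Your bookkeeping remarks --- matching $n = \card{\Gamma}$ with distinct indices $\g_1,\dots,\g_n$ --- are the only points of care, and you handle them correctly.
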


\begin{theorem} \label{thm:comeagerOKset} 
If \(\mathcal S\) is a co-meager semifilter then there is an ultrafilter 
on \(\mathcal S\) which is a \(2^\omega\)-O.K. set (and hence a weak $P$-set).
\end{theorem}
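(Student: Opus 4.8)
The plan is to construct $\F$ by a transfinite recursion of length $\continuum$, building it as an increasing union of filter bases on $\S$ and simultaneously accomplishing two tasks: (a) deciding every $X \in \S$, so that the final $\F$ is an ultrafilter on $\S$; and (b) adjoining, for every countable decreasing sequence of filter elements that is encountered, a family of $\continuum$-many witnesses to the $2^\w$-O.K. condition isolated in the preceding lemma. Since there are only $\continuum$-many countable sequences of subsets of $\w$, there are only $\continuum$-many requirements of type (b), so both tasks can be scheduled into $\continuum$-many steps; this is precisely Kunen's observation that $2^\w$-O.K.-ness, unlike being a weak $P$-set directly, splits into $\continuum$-many requirements. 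Once $\F$ is shown to be a $2^\w$-O.K. filter it is in particular $\w_1$-O.K., and Lemma~\ref{lem:okweak} gives that $\hat\F$ is a weak $P$-set.

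The combinatorial heart is the construction of the O.K.-witnesses. Given a decreasing sequence $\seq{F_n}{n < \w}$ of filter elements, I would set $L_k = F_k \setminus F_{k+1}$, assigning each point of $F_1$ the rank $k$ for which it lies in $L_k$, and build witnesses of the form $V_\g = \bigcup_{k \geq 1} W^k_\g$, where for each fixed $k$ the family $\set{W^k_\g}{\g < \continuum}$ consists of subsets of $L_k$ any $k+1$ of which have finite intersection (a $k$-almost-disjoint family of size $\continuum$, which exists on any infinite set). A direct computation then shows that for distinct $\g_1, \dots, \g_m$ the intersection $V_{\g_1} \cap \dots \cap V_{\g_m}$ meets each $L_k$ with $k < m$ in a finite set (an intersection of at least $k+1$ level-$k$ sets) and is otherwise contained in $\bigcup_{k \geq m} L_k \sub F_m$; hence $V_{\g_1} \cap \dots \cap V_{\g_m} \sub^* F_m$, which is exactly the requirement of the preceding lemma.

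The difficulty — and what forces the use of co-meagerness rather than mere infinitude — is that every $V_\g$, and every finite intersection of the $V_\g$ with elements of the filter built so far, must itself lie in $\S$; for $\S = [\w]^\w$ this is automatic, but in general it is not. To control this I would fix, via Proposition~\ref{prop:talagrand}, a finite-to-one $f \colon \w \to \w$ witnessing $\S \geq_{RB} [\w]^\w$, so that any set containing infinitely many full fibres $f^{-1}(m)$ lies in the $G_\delta$ semifilter $\G \sub \S$ of Corollary~\ref{cor:comeagers}. I would then build the level families $\set{W^k_\g}{\g<\continuum}$ out of whole fibres, so that each $V_\g$ contains infinitely many fibres and hence lies in $\G \sub \S$, the same device keeping the relevant finite intersections in $\S$. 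To guarantee compatibility with the filter and that the recursion never dies, I would maintain, in the style of Posp\'i\v{s}il, a reserved fibre-respecting independent family that stays independent modulo the filter at every stage; because each O.K.-requirement constrains only finitely many reserved sets at a time, a co-$\continuum$ portion of the reservoir survives each step, carrying the construction through all $\continuum$ stages.

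The step I expect to be the main obstacle is reconciling task (a) with task (b). Maximality of $\F$ as a filter on $\S$ can force into $\F$ sets that contain only finitely many fibres (``thin'' sets), and an O.K.-requirement posed by a sequence of thin sets cannot be met by fibre-respecting witnesses, so the clean mechanism of the third paragraph breaks down exactly on the sequences that maximality produces. The crux is therefore to schedule the decisions in task (a) so as to preserve a coinitial, fibre-respecting structure for the witnesses: at each step a candidate $X \in \S$ should be refuted whenever the reserved independent family already drives some finite intersection $X \cap Y$ out of $\S$, and absorbed only in a fibre-respecting way otherwise. Verifying that this dichotomy can always be enforced — that is, that maximality on $\S$ is attainable without sacrificing the fibre-respecting coinitiality on which the witness construction depends — is the delicate point on which the whole argument turns, and is precisely where the hypothesis that $\S$ is co-meager (equivalently, Rudin--Blass above $[\w]^\w$) must be made to do its work.
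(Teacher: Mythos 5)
Your architecture matches the paper's in outline (a length-$\continuum$ recursion interleaving maximality steps with O.K.-witness steps, kept alive Posp\'i\v{s}il-style by a large reserved family), and your computation with the levels $L_k = F_k \setminus F_{k+1}$ and $k$-linked families is a correct verification of the $2^\w$-O.K. condition \emph{in isolation}. But the proposal has a genuine gap, and you concede it yourself in the final paragraph: you never verify that the witnesses $V_\gamma$, and their finite intersections with the current and all \emph{future} filter elements, can be kept inside $\S$. Your only mechanism for membership in $\S$ is ``contains infinitely many full fibres of $f$,'' and that mechanism can break even before maximality forces in thin sets: for a decreasing sequence $\seq{F_n}{n<\w}$ of filter elements, the levels $L_k = F_k \setminus F_{k+1}$ may meet every fibre in a finite set (e.g.\ in $\thick$, take $F_k$ to be the union of the fibre-intervals each shortened by its last $k$ points), so fibre-built families $W^k_\gamma \sub L_k$ need not exist at all. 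Moreover, each O.K. step adds $\continuum$-many new filter elements, and your reserved independent family is not tied to them in any way, so nothing guarantees that later maximality decisions preserve centeredness in $\S$ with all of them. Deferring this to ``the delicate point on which the whole argument turns'' leaves the theorem unproved.

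The paper closes exactly this gap with one device your sketch lacks: fix in advance a $2^\w \times 2^\w$ independent \emph{linked} matrix modulo $({\mathcal Fr},\S)$ --- Kunen's matrix pulled back through the finite-to-one Rudin--Blass reduction (Corollary~\ref{cor:comeagrindependent}) --- and maintain as the recursion invariant that the unused rows stay independent modulo $(\F_\alpha,\S)$. The O.K. witnesses are then $V_\gamma = \bigcup_{n<\w} \left(Y_n \cap A^\beta_{\gamma,n}\right)$ for a single sacrificed row $\beta$: the row's linkedness yields your finite-intersection computation ($\bigcap_{\gamma\in\Gamma}V_\gamma\setminus Y_{|\Gamma|}$ finite), while matrix independence modulo $(\F_\alpha,\S)$ gives, permanently and for free, that every finite intersection of the $V_\gamma$'s with filter elements and remaining matrix entries lies in $\S$ --- compatibility is inherited from the invariant rather than re-proved at each later stage. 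The maximality step is handled by the trichotomy of Lemma~\ref{lem:maximalstep}: for $X \in \S$, either $X$ or $\w\setminus X$ can be added preserving the invariant, or else a finite legal intersection $Z$ of filter and matrix elements (costing finitely many rows) satisfies $Z\cap X \notin \S$ and $Z\setminus X \notin \S$, and one adds $Z$. The insight your proposal misses is that an ultrafilter on $\S$ need not decide $X$ in the Boolean sense: it suffices that some filter element destroy the centeredness of $\{X\}\cup\F$ in $\S$, and the third horn of the trichotomy exploits precisely this, so no ``fibre-respecting'' scheduling of the maximality decisions is ever needed.
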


The proof uses large independent linked systems. 
For our purposes we will slightly modify the relevant defition.

Given a filter \(\mathcal F\) and a semifilter \(\mathcal S\) we say that a family of sets
\(\mathcal A(C,R) = \{X^\beta_{\alpha,n}:\alpha\in C,n<\omega,\beta\in R\}\) is a \(C\times R\) independent linked 
matrix modulo \((\mathcal F,\mathcal S)\) if 

\begin{enumerate}
  \item For each \(\alpha\in C,\beta\in R\) the sequence \(\langle X^\beta_{\alpha,n}:n<\omega\rangle\) is increasing in
        \(\subseteq\).
  \item For each  \(F\in \mathcal F\), each finite set of rows \(R_0\in[R]^{<\omega}\), each choice of natural numbers
        \(N:R_0\to\omega\) and each choice of sets of columns \(C_0:R_0\to[A]^{<\omega}\) such that \(|C_0(\beta)|\leq N(\beta)\)
        for each \(\beta\in R_0\) the intersection
        $$    F\cap\bigcap_{\beta\in R_0}\bigcap_{\alpha\in C_0(\beta)} X^\beta_{\alpha,N(\beta)}$$
        is in \(\mathcal S\).
  \item For each row  \(\beta\in R\) and any \(n+1\)-many columns \(C_0\in[C]^{n+1}\) the intersection
        $$    \bigcap_{\alpha\in C_0} X^\beta_{\alpha,n}$$
        is finite.
\end{enumerate}

If  \(\mathcal A(C,R)\) is an independent matrix and \(R_0\subseteq R\) are some rows, we will use
\(\mathcal A(C,R\setminus R_0)\) to denote the matrix constructed from \(\mathcal A(C,R)\) by deleting the rows (with indices)
in \(R_0\).

\begin{lemma}[Kunen]
There is a  \(2^\omega\times 2^\omega\)-independent linked matrix modulo \(({\mathcal Fr},[\omega]^{\omega})\).
\end{lemma}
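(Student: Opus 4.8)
The plan is to build the matrix by a transfinite recursion of length $2^\w$ in which I construct, for each pair $(\alpha,\beta) \in 2^\w \times 2^\w$, an increasing sequence $\seq{X^\beta_{\alpha,n}}{n < \w}$. The natural starting point is the classical independent matrix modulo $(\mathrm{Fr},[\w]^\w)$: recall that an independent family of size $2^\w$ exists in ZFC, and from such a family one can cook up a $2^\w \times 2^\w$ array $\seq{X^\beta_{\alpha,n}}{\alpha,\beta < 2^\w, n < \w}$ satisfying conditions (1) and (3), with the ``independence'' in (2) holding in the strong form that all the finite boolean combinations are infinite (i.e., lie in $[\w]^\w$). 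This is essentially Kunen's original construction of independent linked families, and I would cite or reproduce it as the backbone.

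The substantive work is passing from the independence condition ``(2) with $\mathcal S = [\w]^\w$'' to ``(2) with $\mathcal S = [\w]^\w$'' as stated — but here, since the claimed matrix is modulo $([\w]^\w)$ itself, condition (2) just asks that every admissible finite intersection $F \cap \bigcap_{\beta \in R_0}\bigcap_{\alpha \in C_0(\beta)} X^\beta_{\alpha,N(\beta)}$ (intersected with an arbitrary $F \in \mathrm{Fr}$, i.e., co-finite) be infinite. Since finite modifications do not affect infiniteness, intersecting with $F \in \mathrm{Fr}$ is harmless, so (2) reduces exactly to the assertion that every boolean combination of the form $\bigcap_{\beta \in R_0}\bigcap_{\alpha \in C_0(\beta)} X^\beta_{\alpha,N(\beta)}$ is infinite. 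The key technical point is that the ``linked'' constraint (3) — that within a single row, any $n+1$ columns at level $n$ have finite intersection — must coexist with the independence across rows. I would arrange the columns within each row so that at level $n$ each set $X^\beta_{\alpha,n}$ is built from an independent family partitioned so that along a fixed row the sets at level $n$ form an $n$-linked but not $(n+1)$-linked family, while across distinct rows everything remains fully independent.

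Concretely, I would realize $\w$ as a product (or a countable union of finite-dimensional layers) so that row $\beta$ ``lives on'' its own independent coordinate and the level-$n$ sets in that row are indexed by an independent family whose $(n{+}1)$-fold intersections are empty but whose $n$-fold intersections (and all cross-row combinations) are infinite. A clean way to force (3) is to index level-$n$ column sets in a fixed row by the faces of a simplicial structure of dimension $n$, ensuring that any $n+1$ of them meet in the empty set, while (2) follows because the rows occupy independent coordinates and the Finite Intersection Property is preserved by the independence of the base family. The recursion then need only verify at each stage that the finitely many new constraints are compatible with the (infinitely many, but finitely-supported) constraints already imposed, which is guaranteed by the reservoir of independence in the size-$2^\w$ family.

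The hard part will be book-keeping the two orthogonal constraints simultaneously: independence \emph{across} rows (condition (2)) pulls toward all intersections being large, whereas the linked condition (3) deliberately forces certain \emph{within-row} intersections to be finite. The danger is that a within-row finiteness requirement accidentally kills a cross-row intersection that (2) needs to keep infinite. I expect to resolve this by a careful separation of coordinates — letting each row's linkedness be witnessed on a dedicated block of $\w$ that is independent from every other row's block — so that the finite intersections demanded by (3) are always ``localized'' to a single row and hence never interfere with the cross-row boolean combinations required by (2). Verifying that this separation can be maintained through all $2^\w$ stages of the recursion, and that the resulting family genuinely has size $2^\w$ in both dimensions, is where the main care is needed.
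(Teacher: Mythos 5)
Your proposal has a genuine gap, and it sits exactly where you locate ``the main care'': the mechanism you propose for reconciling (2) with (3) does not work, and the rest of the argument is circular. The backbone step --- from an independent family of size $2^\omega$ one can ``cook up'' an array satisfying (1) and (3) while preserving independence, citing ``Kunen's original construction of independent linked families'' --- is an appeal to the very lemma being proved; an independent family gives you (2) alone and says nothing about the within-row linkedness. The one concrete device you offer, localizing each row's linkedness to a dedicated block of $\omega$ ``independent from every other row's block,'' is self-defeating. If the sets of row $\beta$ are essentially confined to a block $T_\beta$, then cross-row intersections live inside $T_\beta\cap T_{\beta'}$; since $2^\omega$ many pairwise disjoint blocks of $\omega$ do not exist, and almost disjoint ones meet only finitely, condition (2) dies. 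If instead the sets are allowed to be large off their own block, then (3) is only witnessed on $T_\beta$ and nothing prevents an $(n{+}1)$-fold within-row intersection from being infinite elsewhere. So localization either kills the independence or fails to enforce linkedness; the two conditions cannot be separated onto different parts of $\omega$. The simplicial-faces remark has the same defect: you would need, cofinally often, an $n$-linked-but-not-$(n{+}1)$-linked pattern shared by $2^\omega$ columns, with no finite object carrying it, and your transfinite recursion never explains how the ``reservoir of independence'' certifies this.

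The paper's proof (due to P.~Simon) shows what a working mechanism looks like, and it is pointwise-finitary rather than geometric: take the countable base set $S=\{(k,f):k\in\omega,\ f:\mathcal P(k)\to\mathcal P(\mathcal P(k))\}$ and define in one line $X^B_{A,n}=\{(k,f)\in S: |f(B\cap k)|\leq n\ \text{and}\ A\cap k\in f(B\cap k)\}$. Condition (3) holds because $n{+}1$ columns whose traces on $k$ are eventually distinct cannot all place their traces in a set $f(B\cap k)$ of size at most $n$, so only finitely many $(k,f)$ lie in the intersection; condition (2) holds because for every $k$ large enough to separate the finitely many rows and columns involved one may simply set $f(B\cap k)=\{A\cap k: A\in C_0(B)\}$, which is legal since $|C_0(B)|\leq N(B)$, putting $(k,f)$ in the required intersection for infinitely many $k$ --- and, as you correctly observe, intersecting with a cofinite $F\in\mathcal{F}r$ is then harmless. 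The crucial feature your sketch lacks is that each element of the base set carries a \emph{finite cardinality constraint} enforcing linkedness row-locally while leaving total freedom across rows; this makes the whole matrix explicit, with no recursion of length $2^\omega$ and no stage-by-stage compatibility check --- which is precisely the bookkeeping your plan defers and never supplies.
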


Kunen used an elaborate recursive construction using trees. The following simple
proof is due to P. Simon

\begin{proof}[Proof of the Fact]
We shall construct such a family consisting of subsets of the countable set  
$S=\{(k,f):k\in\omega,f\in{}^{\mathcal P(k)}\mathcal{PP}(k)\}$. Given $A,B\subseteq\omega$ and $n<\omega$ let 
$$   X_{A,n}^B=\{(k,f)\in S:|f(B\cap k)|\leq n\ \&\ A\cap k\in f(B\cap k)\}.$$
It is routine, if perhaps somewhat involved, to check that $\{X_{A,n}^B: n<\omega, A,B\subseteq\omega\}$ 
is a $2^\omega\times2^\omega$ independent linked family modulo \(({\mathcal Fr},[\omega]^{\omega})\).
\end{proof}

\begin{corollary}\label{cor:comeagrindependent}
If  \(\mathcal S\) co-meager, then there is \(2^\omega\times 2^\omega\)-independent linked
matrix modulo \(({\mathcal Fr},\mathcal S)\)
\end{corollary}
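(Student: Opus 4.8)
The plan is to transport the matrix supplied by the preceding result of Kunen through a Rudin-Blass reduction. Because $\S$ is co-meager, Proposition~\ref{prop:talagrand} yields a finite-to-one $f\colon\w\to\w$ witnessing $\S\geq_{RB}[\w]^\w$, so that for any $A\sub\w$ we have $A\in[\w]^\w$ (i.e.\ $A$ is infinite) if and only if $f^{-1}[A]\in\S$. Let $\mathcal A(C,R)=\{X^\beta_{\alpha,n}:\alpha\in C,\,n<\w,\,\beta\in R\}$ be a $2^\w\times 2^\w$ independent linked matrix modulo $({\mathcal Fr},[\w]^\w)$, as furnished by the Fact, and define a new matrix on the same index sets by $Y^\beta_{\alpha,n}=f^{-1}[X^\beta_{\alpha,n}]$. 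I claim that $\{Y^\beta_{\alpha,n}\}$ is the desired $2^\w\times 2^\w$ independent linked matrix modulo $({\mathcal Fr},\S)$, and I would verify the three defining conditions in turn.

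Conditions (1) and (3) are immediate. Since $f^{-1}$ preserves $\sub$, each sequence $\langle Y^\beta_{\alpha,n}:n<\w\rangle$ is increasing, which is (1). For (3), $f^{-1}$ commutes with intersections, so $\bigcap_{\alpha\in C_0}Y^\beta_{\alpha,n}=f^{-1}[\bigcap_{\alpha\in C_0}X^\beta_{\alpha,n}]$; the inner intersection is finite by condition (3) for $\mathcal A(C,R)$, and the $f$-preimage of a finite set is finite because $f$ is finite-to-one.

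The substantive step is condition (2). Fixing $F\in{\mathcal Fr}$, a finite $R_0\sub R$, a map $N\colon R_0\to\w$, and columns $C_0$ with $|C_0(\beta)|\leq N(\beta)$, I would again pull the preimage outside the intersections to write
$$\bigcap_{\beta\in R_0}\bigcap_{\alpha\in C_0(\beta)}Y^\beta_{\alpha,N(\beta)}=f^{-1}\!\left[\,\bigcap_{\beta\in R_0}\bigcap_{\alpha\in C_0(\beta)}X^\beta_{\alpha,N(\beta)}\,\right]=:f^{-1}[Z].$$
Applying condition (2) for $\mathcal A(C,R)$ with $F=\w\in{\mathcal Fr}$ shows that $Z$ is infinite, whence $f^{-1}[Z]\in\S$ by the Rudin-Blass property of $f$. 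The point I would take care over is the remaining intersection with $F$: since $F$ is cofinite, $F\cap f^{-1}[Z]$ differs from $f^{-1}[Z]$ by a finite set, and semifilters are closed under finite modifications, so $F\cap f^{-1}[Z]\in\S$. This is the one place where the Fréchet filter and the semifilter $\S$ interact, and it is really a bookkeeping point rather than a genuine obstacle; since no serious difficulty arises, the corollary follows.
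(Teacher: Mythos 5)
Your proposal is correct and matches the paper's proof exactly: the paper likewise pulls back Kunen's matrix via the finite-to-one function $f$ from Proposition~\ref{prop:talagrand}, setting $Y^\beta_{\alpha,n}=f^{-1}[X^\beta_{\alpha,n}]$, and dismisses the verification as routine. Your write-up simply supplies the details the paper leaves to the reader, including the correct bookkeeping observation that intersecting with a cofinite $F$ is absorbed by closure of $\S$ under $\sub^*$.
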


\begin{proof}
By Proposition~\ref{prop:talagrand}, there is a finite-to-one function \(f:\omega\to\omega\) such that 
for each \(X\subseteq\omega\) we have that \(X\in [\omega]^{\omega} \iff f^{-1}[X]\in\mathcal S\). 
Let \(\{X^\beta_{\alpha,n}:\alpha,\beta\in2^\omega,n<\omega\}\) be the matrix given by the previous fact.
Let \(Y^\beta_{\alpha,n}=f^{-1}[X^\beta_{\alpha,n}]\). It is easy to check that the \(Y\)'s form the required independent
matrix.
\end{proof}

We are now ready to prove the main Theorem~\ref{thm:comeagerOKset}. The proof constructs the maximal filter by a recursive construction which is kept going by a large independent linked matrix. 

There will be two kinds of requirements that we will need to meet. One type
takes care of a single countable sequence of neigbourhoods that potentially 
comes into play in the definition of \(2^\omega\)-O.K. sets, the other
type will take care of a single set in our semifilter to guaratee that the resulting 
filter is maximal. 

The following two lemmas say that the requirements can be met at the cost
of sacrificing at most countably many rows from our matrix. In both of these
lemmas \(\mathcal S\) is some co-meager semifilter. 

\begin{lemma}
Let  \(\mathcal A(C,R)\) be an independent linked matrix modulo \((\mathcal F,{\mathcal S})\) and \(\langle Y_n:n<\omega\rangle\)
a sequence of elements of \(\mathcal F\). Fix any row \(\beta\in R\). Then there are \(\{V_\gamma:\gamma<2^\omega\}\) such
that \(\mathcal A(C,R\setminus\{\beta\})\) is independent linked modulo \((\mathcal F\cup\{V_\alpha:\alpha<2^\omega\},{\mathcal S})\)
and, moreover, for any finite set of indices \(\Gamma\in[2^\omega]^{<\omega}\) the set
$$ \bigcap_{\alpha\in\Gamma} V_\alpha\setminus Y_n$$
is finite.
\end{lemma}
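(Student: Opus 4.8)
The plan is to build each $V_\gamma$ out of the single deleted row $\beta$, using the sequence $\langle Y_n : n<\omega\rangle$ as a scale. First I would replace $Y_n$ by $\bigcap_{m\le n}Y_m$, so that without loss of generality $\langle Y_n : n<\omega\rangle$ is $\subseteq$-decreasing; this only shrinks the $Y_n$ within $\mathcal F$ and preserves both conclusions. For $x\in Y_0\setminus\bigcap_m Y_m$ let $\ell(x)=\max\{n:x\in Y_n\}$ be its \emph{level}, and for $\gamma\in C$ (identifying $2^\omega$ with the column set) define
$$ V_\gamma=\bigcap_m Y_m\ \cup\ \bigcup_{n<\omega}\big((Y_n\setminus Y_{n+1})\cap X^\beta_{\gamma,n}\big), $$
so that on the level set $\{x:\ell(x)=n\}$ the set $V_\gamma$ agrees with $X^\beta_{\gamma,n}$, and $V_\gamma\subseteq Y_0$. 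The whole point of this definition is that it must simultaneously make the $V_\gamma$ thin enough at low levels (to force the desired finiteness) and wide enough at high levels (to keep the remaining matrix independent); threading this needle is exactly where the monotonicity hypotheses on the matrix entries do the work.

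For the second conclusion, fix distinct $\gamma_1,\dots,\gamma_k$, put $\Gamma=\{\gamma_1,\dots,\gamma_k\}$, and show $\bigcap_{i\le k}V_{\gamma_i}\setminus Y_k$ is finite. Since the $V_{\gamma_i}$ are aligned along the level partition, an element $x$ of the intersection lying outside $Y_k$ has $\ell(x)=m<k$ and satisfies $x\in\bigcap_{i\le k}X^\beta_{\gamma_i,m}$. As $k\ge m+1$, this intersection of $k$ distinct columns of row $\beta$ at level $m$ is contained in an intersection of $m+1$ of them, which is finite by clause (3) of the matrix definition. Summing over the finitely many levels $m<k$ gives $\bigcap_{i\le k}V_{\gamma_i}\setminus Y_k\subseteq\bigcup_{m<k}\bigcap_{i\le k}X^\beta_{\gamma_i,m}$, a finite set; this is the displayed conclusion of the lemma (with $n=|\Gamma|$).

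For the first conclusion I must check that $\mathcal A(C,R\setminus\{\beta\})$ is independent linked modulo $(\mathcal F',\mathcal S)$, where $\mathcal F'$ is generated by $\mathcal F\cup\{V_\gamma:\gamma<2^\omega\}$. Clauses (1) and (3) concern only the entries $X^\rho_{\alpha,n}$ with $\rho\ne\beta$ and are inherited verbatim, so the real content is clause (2). Because $\mathcal S$ is upward closed it suffices to treat a basic element $F=F'\cap V_{\gamma_1}\cap\cdots\cap V_{\gamma_j}$ of $\mathcal F'$ (with $F'\in\mathcal F$ and the $\gamma_i$ distinct) together with an arbitrary matrix part $M=\bigcap_{\rho\in R_0}\bigcap_{\alpha\in C_0(\rho)}X^\rho_{\alpha,N(\rho)}$, $R_0\subseteq R\setminus\{\beta\}$, and to show $F\cap M\in\mathcal S$. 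The key point is that, reinstating row $\beta$ at the single level $j$, the set
$$ Z=F'\cap Y_j\cap\bigcap_{i\le j}X^\beta_{\gamma_i,j}\cap M $$
lies in $\mathcal S$ by clause (2) of the \emph{original} matrix: it corresponds to the filter element $F'\cap Y_j\in\mathcal F$ and the rows $R_0\cup\{\beta\}$, assigning row $\beta$ the level $N(\beta)=j$ and the $j$ distinct columns $\gamma_1,\dots,\gamma_j$, so that $|C_0(\beta)|=j\le N(\beta)$. Moreover $Z\subseteq\bigcap_{i\le j}V_{\gamma_i}$: any $x\in Z$ lies in $Y_j$; if $x\in\bigcap_m Y_m$ then $x\in V_{\gamma_i}$ for all $i$ outright, and otherwise $\ell(x)\ge j$, so since the columns of row $\beta$ are $\subseteq$-increasing in the level index, $x\in X^\beta_{\gamma_i,j}\subseteq X^\beta_{\gamma_i,\ell(x)}$ for every $i$, again placing $x$ in $V_{\gamma_i}$. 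Thus $Z\subseteq F\cap M$, and upward closure of $\mathcal S$ gives $F\cap M\in\mathcal S$. I expect the main obstacle to be precisely this compatibility check: arranging the definition so that a \emph{single} high level $j$ simultaneously yields a large lower bound $Z$ for independence while the low levels remain finite for the O.K. property, which is exactly what the monotonicity of the matrix entries in the level index buys.
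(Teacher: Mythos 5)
Your proof is correct and is essentially the paper's: the paper simply sets $V_\gamma=\bigcup_{n<\omega}\left(Y_n\cap X^\beta_{\gamma,n}\right)$ and leaves all verification implicit, and after your WLOG reduction to decreasing $Y_n$ your level-partitioned definition agrees with this set up to adding $\bigcap_{m<\omega}Y_m$, which affects neither conclusion. The checks you spell out (monotonicity in $n$ plus clause (3) for the finiteness statement with $n=|\Gamma|$, and reinstating row $\beta$ at level $j=|\Gamma|$ with $|C_0(\beta)|=j\leq N(\beta)$ for clause (2)) are exactly the verifications the paper's one-line proof omits.
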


\begin{proof}
Just let 
$$ V_\gamma = \bigcup_{n<\omega} Y_n\cap A^\beta_{\gamma,n}.$$
\end{proof}

\begin{lemma} \label{lem:maximalstep} 
Let  \(\mathcal A(C,R)\) independent linked matrix modulo \((\mathcal F,{\mathcal S})\) and \(X\in{\mathcal S}\). Then there is 
a finite number of rows \(R^\prime\) and an extension \(\mathcal F^\prime\) of \(\mathcal F\) such that 
\(\mathcal A(C,R\setminus R^\prime)\) is an independent linked matrix modulo \(\mathcal F^\prime,{\mathcal S}\) and either \(X\) or 
\(\omega\setminus X\) are in \(\mathcal F^\prime\) or there is \(F\in\mathcal F^\prime\) such that both \(F\cap X\) and
\(F\setminus X\) are not in \(\mathcal S\).
\end{lemma}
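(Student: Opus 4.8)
The plan is to prove the lemma by a two‑step dichotomy, testing whether $X$ or $\omega\setminus X$ can be adjoined to the filter while keeping the matrix independent linked, and to make this work I will first isolate a \emph{merging observation} that lets me absorb a finite matrix intersection into the filter at the cost of deleting finitely many rows.

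First I would record the merging observation. Call a set of the form $I=\bigcap_{\beta\in R_0}\bigcap_{\alpha\in C_0(\beta)}X^\beta_{\alpha,N(\beta)}$, with $R_0\in[R]^{<\omega}$ and $N,C_0$ obeying $|C_0(\beta)|\leq N(\beta)$, a \emph{block over $R_0$}. By clause (2) of the definition, $F\cap I\in\mathcal S$ for every $F\in\mathcal F$; in particular every block lies in $\mathcal S$ (hence is infinite, so all filters generated below are proper). The observation is: \emph{if $I$ is a block over a finite $R_0$, then $\mathcal A(C,R\setminus R_0)$ is an independent linked matrix modulo $(\langle\mathcal F\cup\{I\}\rangle,\mathcal S)$.} Clauses (1) and (3) are untouched by deleting rows, so only clause (2) needs checking; but any block $J$ over rows $R_1\subseteq R\setminus R_0$ uses rows disjoint from $R_0$, so $I\cap J$ is again a single block over $R_0\cup R_1$, with the per‑row column bound $|C_0(\beta)|\leq N(\beta)$ inherited separately on each row. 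Hence $F\cap I\cap J\in\mathcal S$ for every $F\in\mathcal F$ by clause (2) of the original matrix. This disjointness of rows is precisely what prevents the column bounds from being violated, and it is the crux of the whole argument.

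Next I would run the dichotomy. If adjoining $X$ preserves clause (2) for the full matrix --- i.e.\ $F\cap X\cap I\in\mathcal S$ for every $F\in\mathcal F$ and every block $I$ --- then $\mathcal F'=\langle\mathcal F\cup\{X\}\rangle$ with $R'=\emptyset$ works and $X\in\mathcal F'$. Otherwise fix a witnessing $F_1\in\mathcal F$ and block $I_1$ over a finite $R_1$ with $F_1\cap X\cap I_1\notin\mathcal S$, and use the merging observation to pass to $\mathcal F^*=\langle\mathcal F\cup\{I_1\}\rangle$ and the matrix $\mathcal A(C,R\setminus R_1)$. Now repeat the test for $\omega\setminus X$ against this reduced matrix. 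If adjoining $\omega\setminus X$ preserves clause (2), then $\mathcal F'=\langle\mathcal F^*\cup\{\omega\setminus X\}\rangle$ with $R'=R_1$ works and $\omega\setminus X\in\mathcal F'$. Otherwise I obtain $F_2\in\mathcal F$ and a block $I_2$ over a finite $R_2\subseteq R\setminus R_1$ with $F_2\cap I_1\cap I_2\cap(\omega\setminus X)\notin\mathcal S$; note $R_2$ is automatically disjoint from $R_1$ because those rows have already been deleted.

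Finally, in the remaining case I set $R'=R_1\cup R_2$ (a disjoint, hence finite, union) and $\mathcal F'=\langle\mathcal F\cup\{I_1,I_2\}\rangle$; two applications of the merging observation show $\mathcal A(C,R\setminus R')$ is independent linked modulo $(\mathcal F',\mathcal S)$. Then $F:=F_1\cap F_2\cap I_1\cap I_2\in\mathcal F'$, and since $F\cap X\subseteq F_1\cap X\cap I_1\notin\mathcal S$ and $F\setminus X\subseteq F_2\cap I_1\cap I_2\cap(\omega\setminus X)\notin\mathcal S$, upward closure of $\mathcal S$ forces that neither $F\cap X$ nor $F\setminus X$ is in $\mathcal S$ --- the third alternative. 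The main obstacle throughout is the column‑bound bookkeeping in clause (2); the argument is arranged exactly so that every pair of blocks I intersect lives on disjoint sets of rows, which is why the second witness $I_2$ must be extracted only \emph{after} the rows $R_1$ have been removed.
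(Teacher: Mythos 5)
Your proposal is correct and follows essentially the same route as the paper's proof: a two-step dichotomy in which a failed attempt to adjoin $X$ (and then $\omega\setminus X$, tested only after the first witness's rows are deleted) yields two blocks on disjoint rows whose intersection with the witnessing filter sets gives the set $F$ (the paper's $Z$) realizing the third alternative. Your explicit ``merging observation'' is precisely the verification the paper leaves implicit in its phrase ``by construction,'' so the two arguments coincide in substance.
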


\begin{proof}
If  \(\mathcal A(C,R)\) is not independent modulo \((\mathcal F\cup\{X\},{\mathcal S})\) then there are an \(F_0\in\mathcal F\);
a finite set of rows \(R_0\); sizes \(N_0:R_0\to\omega\), and finite sets of columns \(C_0:R_0\to 2^\omega\) each of size 
given by \(N_0\) such that
$$   X\cap F_0\cap\bigcap_{\beta\in R_0}\bigcap_{\alpha\in C_0(\beta)} X^\beta_{\alpha,N_0(\beta)}$$
is not in \(\mathcal S\). If \(\mathcal A(C,R\setminus R_0)\) independent modulo \((\mathcal F\cup\{\omega\setminus X\},{\mathcal S})\) then
there are an \(F_1\in\mathcal F\); a finite set of rows \(R_1\subseteq R\setminus R_0\); sizes \(N_1:R_1\to\omega\); and finite
sets of columns \(C_1:C_1\to 2^\omega\) each of size given by \(N_1\) such that
$$   (\omega\setminus X)\cap F_1 \cap\bigcap_{\beta\in R_1}\bigcap_{\alpha\in C_1(\beta)} X^\beta_{\alpha,N_1(\beta)}$$
is not in \(\mathcal S\). Then let 
$$Z = F_0\cap
      \bigcap_{\beta\in R_0}\bigcap_{\alpha\in C_0(\beta)} X^\beta_{\alpha,N_0(\beta)}\cap
      F_1\cap
      \bigcap_{\beta\in R_1}\bigcap_{\alpha\in C_1(\beta)} X^\beta_{\alpha,N_1(\beta)}.$$
and \(R^\prime=R_0\cup R_1\) By construction \(Z\cap X\) and \(Z\setminus X\) are not in \(\mathcal S\) and 
\(\mathcal A(C,R\setminus R^\prime)\) is an independent linked matrix modulo \((\mathcal F\cup\{Z\},{\mathcal S})\).
\end{proof}

The proof of the theorem is now a routine recursive construction. 

\begin{proof}[Proof of Theorem \ref{thm:comeagerOKset}]
Let  \(\mathcal A(2^\omega,2^\omega)\) be an independent linked matrix modulo \(({\mathcal Fr},{\mathcal S})\) given by 
Corollary \ref{cor:comeagrindependent} (notice that, by Proposition \ref{prop:talagrand}, \(\mathcal S\) is Rudin-Blass above \([\omega]^\omega\)).

Enumerate \(\mathcal S\) as \(\{X_\alpha:\alpha\in 2^\omega\}\) and all countable 
sequences of sets from \(\mathcal S\) as \(\{\langle Y_{\alpha,n}:n<\omega\rangle:\alpha<2^\omega\}\). Using the two lemmas we shall
recursively construct a sequence of filters \(\langle \mathcal F_\alpha:\alpha<2^\omega\rangle\) putting the used rows
into \(R_\alpha\) along the way so that the following conditions are satisfied:

\begin{enumerate}
  \item  \(|R_\alpha|\leq\omega\cdot|\alpha|\) for each \(\alpha<2^\omega\);
  \item  \(\mathcal F_\alpha\subseteq F_\beta\) and \(R_\alpha\subseteq R_\beta\) for each \(\alpha<\beta<2^\omega\);
  \item  \(\mathcal A(2^\omega,2^\omega\setminus R_\alpha)\) is an independent linked matrix modulo 
         \((\mathcal F_\alpha,{\mathcal S})\) for each \(\alpha<2^\omega\);
  \item if the sequence  \(Y_\alpha\) is contained in \(\mathcal F_\alpha\) then there are
        \(\{V_\gamma:\gamma<2^\omega\}\subseteq\mathcal F_{\alpha+1}\) such that for each finite set of indices
        \(\Gamma\in[2^\omega]^{<\omega}\) the intersection \(\bigcap_{\gamma\in \Gamma} V_\gamma\setminus Y_{\alpha,|I|}\)
        is finite; and
  \item either \(X_\alpha\in\mathcal F_{\alpha+1}\) or \(\omega\setminus X_\alpha\in\mathcal F_{\alpha+1}\) or there is a
        set \(F\in\mathcal F_{\alpha+1}\) such that both \(F\cap X_\alpha\not\in{\mathcal S}\)
        and \(F\cap (\omega\setminus X_\alpha)\not\in{\mathcal S}\).
\end{enumerate}

We start by letting  \(\mathcal F_0={\mathcal Fr}\) and \(R_0=\emptyset\). At limit stages we take unions and at successor stages we
use the previous lemmas to guarantee conditions 3 and 4. Finally, we let
$$\mathcal F = \bigcup_{\alpha<2^\omega}\mathcal F_\alpha.$$
Condition 5 guarantees that \(\mathcal F\) is an ultrafilter on \(\mathcal S\) 
while condition 4 guarantees that \(\mathcal F\) is an $2^\w$-O.K. set. This finishes the proof of the theorem.
\end{proof}

\begin{remark} If \(\mathcal S\) is a co-ideal (a semifilter with the Ramsey property) then, using the previous  construction, we can in fact build a full ultrafilter
(i.e. an ultrafilter on \([\omega]^{\w}\)) consisting of sets in \(\mathcal S\)
which is an O.K.-point (and hence a weak P-point). The key here is that in lemma
\ref{lem:maximalstep} we can guarantee that the third option will not take place (since either X will be in \(S\) or its complement will). This will be discussed further in the following section.
\end{remark}


\section{Applications to algebra and combinatorics}\label{sec:applications}

Prior to this section, we have focused our attention on examining the structure of certain semifilters and building certain kinds of ultrafilters in them. In this section, we explore the question of what such ultrafilters might be used for.

First we look at applications to the standard dynamical/algebraic structure of $\w^*$. For $p \in \w^*$, recall that $\s(p)$ is the unique ultrafilter generated by $\set{A+1}{A \in p}$. Equivalently, $\s$ is the restriction to $\w^*$ of the unique continuous extension to $\b\w$ of the successor map on $\w$. This function $\s$, called the \emph{shift map}, provides the canonical dynamical structure for $\w^*$.

Related to the shift map on $\w^*$ is the canonical semigroup structure on $\w^*$. Addition in $\w^*$ is defined by putting  $p+q = \plim \s^n(q)$ for every $p,q \in \w^*$. Here, ``$\plim \s^n(q)$'' has its usual meaning: $r = \plim \s^n(q)$ if and only if, for every neighborhood $U$ of $r$, $\set{n \in \w}{\s^n(q) \in U} \in p$. 

Recall that, if $(X,f)$ is a dynamical system, then $Y \sub X$ is a \emph{minimal subsystem} if $Y$ is closed under $f$ and closed topologically, and if no proper nonempty subset of $Y$ also has these properties. If $(X,+)$ is a semigroup, then $Y \sub X$ is a \emph{minimal left ideal} if $X+Y = Y$, and no proper nonempty subset of $Y$ has this property.

These core notions from dynamics and algebra are, for $\w^*$, related to each other and to the notion of ultrafilters on semifilters by the following result from \cite{Brn}.

\begin{lemma}\label{lem:thicksets}
Let $\F$ be any filter on $[\w]^{\w}$. The following are equivalent:
\begin{enumerate}
\item $\F$ is an ultrafilter on $\thick$.
\item $\hat \F$ is a minimal dynamical subsystem of $(\w^*,\s)$.
\item $\hat \F$ is a minimal left ideal of $(\w^*,+)$.
\end{enumerate}
\end{lemma}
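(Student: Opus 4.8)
The plan is to prove the equivalence of the three conditions in Lemma~\ref{lem:thicksets} by establishing a cycle of implications, using Stone duality to translate between combinatorial statements about filters and topological/dynamical statements about subsets of $\w^*$. Throughout I will use the characterization that $\hat\F = \bigcap_{A \in \F} A^*$ and that $A^* = \set{p \in \w^*}{A \in p}$, together with the fact that the shift map $\s$ and its iterates act on $\w^*$ via $\s^n(p)$ being generated by $\set{A + n}{A \in p}$.

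First I would prove $(2) \Leftrightarrow (3)$, since this links the dynamical and algebraic structure and is essentially the translation lemma relating minimal subsystems of $(\w^*,\s)$ to minimal left ideals of $(\w^*,+)$. The key observation is that a closed set $Y$ is $\s$-invariant (i.e.\ $\s[Y] \sub Y$) if and only if it is a left ideal, because $p + q$ is the $p$-limit of the points $\s^n(q)$, so $\w^* + Y$ is precisely the closure of $\bigcup_{n} \s^n[Y]$, and for closed $\s$-invariant $Y$ this equals $Y$. Minimality then transfers directly: the minimal subsystems of $(\w^*,\s)$ are exactly the minimal left ideals. I would want to be careful here that being closed under $\s$ together with topological closure matches the left-ideal condition $\w^* + Y = Y$; the containment $Y \sub \w^* + Y$ uses that every point of a minimal system is in the orbit closure of any other, and the reverse containment uses $\s$-invariance and closedness.

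Next I would prove $(1) \Leftrightarrow (2)$, the heart of the matter, connecting ultrafilters on the semifilter $\thick$ to minimal dynamical subsystems. For the forward direction, suppose $\F$ is an ultrafilter on $\thick$. I would show $\hat\F$ is closed (automatic from Stone duality), $\s$-invariant, and minimal. Invariance should follow because $A$ is thick iff every shift $A + n$ is thick (thickness is translation-invariant), so the filter $\F$ being maximal among filters containing only thick sets forces $\s^{-1}[\hat\F]$ and $\s[\hat\F]$ to relate back into $\hat\F$. For minimality, I would argue that a proper closed invariant subset would correspond to a filter on $\thick$ properly extending $\F$, contradicting maximality. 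The crucial combinatorial fact is that a set $A$ is thick precisely when for every $n$ the intersection $\bigcap_{i \leq n}(A + i)$ is infinite (equivalently nonempty cofinally), which is exactly what lets one decode ``$A \in \F$ for an ultrafilter on $\thick$'' as a statement about the orbit structure of the points in $\hat\F$.

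The main obstacle will be the minimality correspondence in $(1) \Leftrightarrow (2)$: showing that \emph{maximality} of $\F$ as a filter on the poset $(\thick, \sub^*)$ corresponds exactly to \emph{minimality} of the closed set $\hat\F$ as a dynamical subsystem. The subtlety is that maximality on $\thick$ is weaker than maximality on $[\w]^{\w}$, so $\hat\F$ need not be a single ultrafilter; I must show that the ``room'' left by only being maximal among thick-set filters is precisely the room occupied by a whole minimal orbit closure. Concretely, given a proper nonempty closed invariant $Z \subsetneq \hat\F$, I would need to extract a thick set witnessing that $\F$ was not maximal, which requires carefully using that $Z$ being invariant means its defining filter is closed under the shift-compatibility that characterizes $\thick$. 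Since this is quoted as Lemma 3.2 of \cite{Brn}, I would expect to cite that source for the delicate direction rather than reprove it in full, and focus the written argument on the cleaner equivalences $(2) \Leftrightarrow (3)$ and the easier halves of $(1) \Leftrightarrow (2)$.
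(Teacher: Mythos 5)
Your proposal is correct and takes essentially the same route as the paper, whose own ``proof'' is purely by citation: it refers to \cite{Brg} for the well-known equivalence of $(2)$ and $(3)$ (established via exactly the $p$-limit/orbit-closure translation you sketch) and to Lemma 3.2 of \cite{Brn} for the equivalence with $(1)$ --- precisely the delicate direction you also propose to cite rather than reprove. Your supplementary details, such as the characterization that $A$ is thick iff $\bigcap_{i \leq n}(A+i)$ is infinite for every $n$ and the correspondence between closed $\s$-invariant sets and closed left ideals, are accurate and consistent with those sources.
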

\begin{proof}
The equivalence of $(2)$ and $(3)$ is well known, and a proof can be found, e.g., in \cite{Brg}. For the equivalence of these and $(1)$, see Lemma 3.2 in \cite{Brn}.
\end{proof}

Recall that an \emph{idempotent ultrafilter} is any $p \in \w^*$ such that $p+p = p$. The idempotents of $\w^*$ (indeed, any semigroup) admit a natural partial order as follows: $p$ and $q$ are idempotent, then $p \leq q$ if and only if $p+q = q+p = p$. If $q+p = p$ (but not necessarily $p+q = p$), we write $p \leq_L q$.

It is known (see, e.g., \cite{H&S}, Theorem 1.38) that an idempotent $p$ is minimal with respect to $\leq$ if and only if $p$ belongs to some minimal left ideal. Such ultrafilters are called \textbf{minimal idempotents}. It is also known that if $q$ is any idempotent then there is a minimal idempotent $p$ with $p \leq q$.

For many years it was a stubborn open question whether $\w^*$ contains any $\leq_L$-maximal idempotents, and a good deal of work was done on this question (see Questions 9.25 and 9.26 in \cite{H&S}, Questions 5.5(2),(3) in \cite{HSZ}, Problems 4.6 and 4.7 in \cite{Brn}, and \cite{Zln}). In \cite{YZ2}, Zelenyuk finally answered this question in the affirmative. The following application of Theorem~\ref{thm:comeagerOKset} provides an alternative proof of Zelenyuk's result, and strengthens the result by showing that some minimal left ideal is a weak $P$-set.

\begin{theorem}\label{thm:algebra}
There is a minimal left ideal of $\w^*$ that is also a weak $P$-set. It follows that
\begin{enumerate}
\item There is an idempotent ultrafilter that is both minimal and $\leq_L$-maximal.
\item There is a minimal left ideal $L \sub \w^*$ such that, for any $p,q \in \w^*$, $p+q \in L$ if and only if $q \in L$.
\item The minimal left ideals are not homeomorphically embedded in $\w^*$.
\end{enumerate}
\end{theorem}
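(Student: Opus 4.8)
The plan is to build everything on top of Theorem~\ref{thm:comeagerOKset} applied to the semifilter $\thick$ of thick sets. First I would observe that $\thick$ is co-meager: indeed $\thick$ is $G_\delta$ (a set is thick iff, for each $n$, it contains an interval of length $n$, and each of these is an open upward-closed condition), so by Corollary~\ref{cor:comeagers} it is co-meager. Thus Theorem~\ref{thm:comeagerOKset} applies and furnishes an ultrafilter $\F$ on $\thick$ that is a $2^\w$-O.K.\ set, hence (being $\w_1$-O.K., using Lemma~\ref{lem:okweak}) a weak $P$-set. By Lemma~\ref{lem:thicksets}, $\hat\F$ is a minimal left ideal $L$ of $(\w^*,+)$. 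This single object---a minimal left ideal that is also a weak $P$-set---is the main assertion, and the three numbered consequences should follow from standard semigroup facts together with the weak-$P$-set property.

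For $(1)$, I would use the known facts (from \cite{H&S}) that every minimal left ideal contains an idempotent, and that every idempotent in a minimal left ideal is a minimal idempotent. So pick a minimal idempotent $p \in L$. It remains to show $p$ is $\leq_L$-maximal. Suppose $p \leq_L q$, i.e.\ $q+p = p$, with $q$ idempotent; I want $q = p$. The point is that $q+p = p \in L$ forces $q$ to lie in (the closure of) $L$, and here is where the weak $P$-set property does the work: one shows via $(2)$ that membership in $L$ of a sum $q+p$ pins down $q$, or more directly that no idempotent strictly above $p$ can exist because the only candidates would have to accumulate onto $L$ in a way a weak $P$-set forbids. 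I expect the cleanest route is to deduce $(1)$ from $(2)$: if $p+q \in L \iff q \in L$ for all $q$, then for idempotent $q \geq_L p$ one has $q+p=p\in L$, whence $q\in L$ by the right-to-left reading applied symmetrically, and minimality of $p$ within $L$ then gives $q=p$.

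The heart is $(2)$, which says $L$ is a \emph{prime} left ideal: $p+q \in L \iff q \in L$. The reverse implication is immediate since $L$ is a left ideal ($\w^*+L = L$). For the forward implication I would argue contrapositively: if $q \notin L$, I must show $p+q \notin L$ for every $p$. Here the weak-$P$-set property is essential. Since $L$ is a minimal left ideal it is closed, so $\w^* \setminus L$ is open and $q$ has a neighborhood $A^*$ disjoint from $L$ (with $A \in q$, $A \notin$ any member of $\F$). The map $x \mapsto \plim \s^n(x)$ and the structure of $+$ as a $p$-limit let me relate $p+q$ to the orbit of $q$; the weak-$P$-set hypothesis guarantees that the countable set of shifts cannot accumulate back onto $L$ unless $q$ was already close to $L$. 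I expect this to be \textbf{the main obstacle}: the naive statement only controls countable sets, whereas $\{p+q : p \in \w^*\}$ can be large, so the argument must instead fix $q \notin L$ and use the weak-$P$-set property of $L$ to separate the \emph{countable} orbit $\set{\s^n(q)}{n \in \w}$ (whose closure contains $p+q$) from $L$, concluding $p+q \notin L$.

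Finally $(3)$ is a soft consequence. If some minimal left ideal $L'$ were homeomorphically embedded in $\w^*$, then since all minimal left ideals are homeomorphic to one another (they are all of the form $\hat\F$ for $\F$ an ultrafilter on $\thick$, and the semigroup acts transitively among them), every minimal left ideal would embed, including our weak-$P$-set $L$; but a weak $P$-set of $\w^*$ cannot be homeomorphically embedded in $\w^*$ as a \emph{non-weak-}$P$-set, and one shows the generic minimal left ideal fails to be a weak $P$-set (e.g.\ because it meets the closure of a countable discrete set of idempotents from other left ideals). The contradiction comes from the fact that being a weak $P$-set is not a topological-embedding invariant: $L$ is a weak $P$-set while a homeomorphic copy sitting differently inside $\w^*$ need not be, so the minimal left ideals cannot all be homeomorphically embedded in a uniform way. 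I would phrase $(3)$ precisely as: the homeomorphism type of a minimal left ideal does not determine its embedding, since $L$ is a weak $P$-set but its homeomorphic copies arising as other minimal left ideals are not.
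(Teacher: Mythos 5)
Your reduction of the main assertion is exactly the paper's: $\thick$ is a $G_\delta$ semifilter, Theorem~\ref{thm:comeagerOKset} together with Lemma~\ref{lem:okweak} gives an ultrafilter on $\thick$ that is a weak $P$-filter, and Lemma~\ref{lem:thicksets} turns its Stone dual into a minimal left ideal $L$ that is a weak $P$-set. Your final formulation of $(2)$ is also the paper's argument, with one step left unjustified: before the weak $P$-property can be applied to the orbit $\set{\s^n(q)}{n \in \w}$, you must know that this countable set is \emph{disjoint} from $L$, and for that you need that $L$ is invariant under $\s$ and $\s^{-1}$ (via Lemma~\ref{lem:thicksets}, $L$ is a minimal subsystem, so $\s(L)=L$); mere closedness of $L$ and $q \notin L$ do not by themselves rule out $\s^n(q) \in L$ for some $n>0$.

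The genuine gap is in $(1)$. You deduce from $q+p = p \in L$ that $q \in L$ ``by the right-to-left reading applied symmetrically,'' but $(2)$ is irreparably asymmetric: it controls only the \emph{right} summand. Since $L$ is a left ideal, $q+p \in L$ holds for \emph{every} $q \in \w^*$ once $p \in L$, so membership of the sum carries no information whatsoever about the left summand. In fact the implication you want is false: if $R$ is the minimal right ideal containing $p$ and $L'\neq L$ is another minimal left ideal, the idempotent $q \in R \cap L'$ satisfies $q+p = p$ and $p+q=q$ with $q \notin L$, so your strict goal $q=p$ is unattainable. The paper's proof instead applies $(2)$ with the \emph{outside} idempotent in the right slot: for idempotent $q \in L$ and idempotent $p \notin L$, the sum $q+p$ is not in $L$ by $(2)$, hence $q+p \neq q$, which rules out the relevant domination; the case $p \in L$ is handled by minimality of idempotents in $L$. (Be warned that the paper's displayed definition of $\leq_L$ has its sides swapped relative to the Hindman--Strauss convention it cites, and its proof writes $p+q$ where $q+p$ is meant; the argument only works in the right-summand form.) Finally, in $(3)$ you assert without proof that some minimal left ideal fails to be a weak $P$-set, and your suggested witness (a countable set of idempotents from other ideals accumulating on it) is not obviously adequate. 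The paper's witness is a one-liner you should adopt: pick $p$ belonging to no minimal left ideal; then $\w^*+p = \closure{\w+p}$ contains a minimal left ideal, which therefore lies in the closure of the countable set $\w+p$ disjoint from it, and since being a weak $P$-set is preserved by autohomeomorphisms of $\w^*$, no such homeomorphism can carry $L$ onto it.
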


For $(3)$, recall that $Y, Z \sub X$ are \emph{homeomorphically embedded} in $X$ if there is some homeomorphism $h: X \to X$ such that $h(Y) = Z$. It is well-known that the minimal left ideals of $\w^*$ are all homeomorphic, and in fact the homeomorphisms between them arise naturally from the algebraic structure (they are shifts of each other). This result says that the minimal left ideals are nonetheless topologically distinguishable, and the natural homeomorphisms between them cannot be extended to homeomorphisms of $\w^*$.

\begin{proof}[Proof of Theorem~\ref{thm:algebra}]
To prove the main assertion of the theorem, first note that, by Lemma~\ref{lem:thicksets}, it suffices to find an ultrafilter on $\thick$ that is also a weak $P$-filter. This follows directly from Theorem~\ref{thm:comeagerOKset} and the fact that $\thick$ is $G_\delta$. To see that $\thick$ is $G_\delta$, let
$$g(A) = \sup \set{n}{A \text{ contains an interval of length } n}$$
for every $A \sub \w$ (possibly $g(A) = \infty$), and apply Proposition~\ref{prop:functional}.

For $(2)$, let $L$ be a weak $P$-set and a minimal left ideal. If $q \in I$, then $p+q \in L$ because $L$ is a left ideal. Since $L$ is closed under $\s$ and $\s^{-1}$ by Lemma~\ref{lem:thicksets}, $\set{\s^n(q)}{n \in \N} \cap L = \0$. As $L$ is a weak $P$-set, $\closure{\set{\s^n(q)}{n \in \N}} \cap L = \0$. If $q \notin L$, then $p + q = \plim \s^n(q)$ is an element of $\closure{\set{\s^n(q)}{n \in \N}}$, so $p + q \notin L$.

For $(1)$, let $L$ be a minimal left ideal that is a weak $P$-set, and let $q \in L$ be idempotent. Since $q \in L$ and $L$ is a minimal left ideal, $q$ is minimal. Let $p$ be any idempotent other than $q$. If $p \in L$ then $p$ is $\leq$-minimal, hence $\leq_L$-minimal (see Proposition 1.36 in \cite{H&S}), so $q \not \leq_L p$. If $p \notin L$ then $p + q \notin L$ by $(1)$, in which case $q \not \leq_L p$. Thus $q$ is $\leq_L$-maximal.

For $(3)$, it suffices to note that some minimal left ideal is not a weak $P$-set. This is well-known and easy to prove: simply take some $p \in \w^*$ that is not in any minimal left ideal, and note that $\w^*+p = \closure{\w+p}$ contains a minimal left ideal.
\end{proof}

Further applications of Section~\ref{sec:weakpsets} and related ideas to the theory of semigroups would take us too far afield here, but these will be explored in a forthcoming sequel to this paper. We now move on to some different applications of the results in the preceding sections.

Say that a semifilter $\S$ has the \emph{Ramsey property} if for every $A \in \S$, if $A = \bigcup_{i \leq n}A_i$, then there is some $i \leq n$ with $A_i \in \S$. Such filters are also sometimes called \emph{co-ideals}, \emph{filterduals}, or \emph{superfilters} (see, e.g., Chapter 2 of \cite{Akn} for more on these). The following lemma is essentially due to Glasner (see \cite{Gls}).

\begin{lemma}\label{lem:glasner}
Let $\S$ be a semifilter with the Ramsey property. If $\F$ is an ultrafilter on $\S$ then $\F$ is an ultrafilter on $[\w]^{\w}$ (i.e., a free ultrafilter in the usual sense).
\end{lemma}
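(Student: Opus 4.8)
The plan is to prove the contrapositive at the level of membership: if $\F$ is a filter on $\S$ that fails to be an ultrafilter on $[\w]^{\w}$, then it fails to be an ultrafilter on $\S$. So suppose $\F$ is an ultrafilter on $\S$ and, aiming for a contradiction, suppose there is some $A \in [\w]^{\w}$ such that neither $A$ nor $\w \setminus A$ lies in $\F$. I would like to conclude that $\F$ was not maximal among filters on $\S$ after all, by exhibiting a strictly larger filter on $\S$.

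The key idea is to exploit the Ramsey property to show that $A$ (or $\w \setminus A$) is \emph{compatible} with $\F$ relative to $\S$, meaning that $\{A\} \cup \F$ generates a filter whose base lies inside $\S$. Concretely, first I would check that for every $F \in \F$, at least one of $F \cap A$ or $F \setminus A$ is in $\S$: indeed $F \in \S$ (since $\F \sub \S$), and $F = (F \cap A) \cup (F \setminus A)$ is a partition into two pieces, so the Ramsey property (with $n = 1$) gives that one of the two pieces is in $\S$. This is the step where the hypothesis is used in an essential way.

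The main obstacle is that the \emph{choice} of which piece lands in $\S$ could a priori vary with $F$, so a naive attempt to throw in $A$ might conflict with some $F$ for which only $F \setminus A$ is in $\S$. To get around this I would argue that one of the two sides works \emph{uniformly}. Suppose, toward a contradiction, that $A$ is not centered with $\F$ in $\S$ and $\w \setminus A$ is not centered with $\F$ in $\S$; then there are $F_0, F_1 \in \F$ with $F_0 \cap A \notin \S$ and $F_1 \cap (\w \setminus A) = F_1 \setminus A \notin \S$. Put $F = F_0 \cap F_1 \in \F$. Since $\S$ is closed upwards in $\sub^*$ and $F \cap A \sub F_0 \cap A$, $F \setminus A \sub F_1 \setminus A$, upward closure forces $F \cap A \notin \S$ and $F \setminus A \notin \S$. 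But $F = (F \cap A) \cup (F \setminus A) \in \S$, so the Ramsey property says one of these pieces is in $\S$ --- a contradiction. Hence at least one of $A$, $\w \setminus A$ is centered with $\F$ in $\S$.

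With that in hand the argument closes quickly. Say $A$ is centered with $\F$ in $\S$ (the other case is symmetric). Then $\set{F \cap A}{F \in \F}$ is a filter base contained in $\S$, so it generates a filter $\F'$ on $\S$ with $A \in \F'$ and $\F \sub \F'$. Since $A \notin \F$, this inclusion is proper, contradicting the maximality of $\F$ as an ultrafilter on $\S$. Therefore no such $A$ exists, i.e.\ $\F$ decides every subset of $\w$, which is exactly the statement that $\F$ is a free ultrafilter on $[\w]^{\w}$ (freeness being automatic since $\F \sub \S \sub [\w]^{\w}$ contains no finite set). I expect the uniform-side argument to be the only delicate point; the rest is bookkeeping about filter bases and upward closure under $\sub^*$.
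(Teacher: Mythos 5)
Your proof is correct and is essentially the paper's own argument: the heart of it --- intersecting the two witnesses ($F = F_0 \cap F_1$, the paper's $Z = X \cap Y$) and applying the Ramsey property to the partition $F = (F \cap A) \cup (F \setminus A)$ together with upward closure of $\S$ under $\sub^*$ --- is exactly the paper's key step. The only difference is the order of bookkeeping: the paper first uses maximality of $\F$ to extract the two non-centeredness witnesses and then contradicts the Ramsey property, whereas you use the Ramsey property to show one side is uniformly centered and then contradict maximality by generating the strictly larger filter from $\set{F \cap A}{F \in \F}$; these are the same proof arranged differently.
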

\begin{proof}
Let $\S$ be a semifilter with the Ramsey property and let $\F$ be an ultrafilter on $\S$. Suppose $\F$ is not an ultrafilter on $[\w]^{\w}$: then there is some infinite $A \sub \w$ such that $A \notin \F$ and $\w \setminus A \notin \F$. Since $\F$ is an ultrafilter on $\S$, neither $\{A\} \cup \F$ nor $\{\w \setminus A\} \cup \F$ is centered in $\S$. Thus there are $X,Y \in \F$ such that $X \cap A \notin \S$ and $Y \setminus A \notin \S$. However, $Z = X \cap Y \in \F$. Since $\S$ is closed under supersets, we cannot have $Z \cap A \in \S$ (otherwise $X \cap A \in \S$) or $Z \setminus A \in \S$ (otherwise $Y \setminus A \in \S$). This contradicts the Ramsey property in $\S$, since $Z \in \S$ and $Z = (Z \cap A) \cup (Z \setminus A)$.
\end{proof}

The non-parenthetical assertion of the following result was proved under the hypothesis $\mathrm{cov}(\text{meager}) = \continuum$ by Jana Fla\v{s}kov\'a in \cite{Fls}. Using Theorem~\ref{thm:d=c}, we can improve her hypothesis to $\dom = \continuum$.

\begin{proposition}\label{prop:coolppoints}
Suppose $\dom = \continuum$ (respectively, only assume \emph{ZFC}). If $\G$ is a $G_\delta$ semifilter with the Ramsey property, then there is a (weak) $P$-point in $\w^*$ with $p \sub \G$.
\end{proposition}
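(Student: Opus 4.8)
The plan is to combine the two main construction theorems from earlier in the paper with the Glasner-type lemma that has just been proved. The key observation is that Lemma~\ref{lem:glasner} bridges the gap between ``ultrafilter on a semifilter'' and ``genuine free ultrafilter on $\w$'': whenever $\S$ has the Ramsey property, \emph{any} ultrafilter on $\S$ is automatically an ultrafilter on $[\w]^\w$, i.e.\ an ordinary point $p \in \w^*$. Moreover, such an ultrafilter consists entirely of sets in $\S$ (since it is a subset of $\S$), so $p \sub \G$. This means that to produce a point of $\w^*$ contained in $\G$ with prescribed topological properties, it suffices to build an ultrafilter \emph{on $\G$} with those properties and then invoke the Ramsey hypothesis.

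With this in hand, the proof splits cleanly into its two cases according to the two hypotheses. For the ZFC (weak $P$-point) assertion, I would first note that $\G$, being $G_\delta$, is in particular co-meager (every $G_\delta$ semifilter is dense and co-meager; cf.\ Corollary~\ref{cor:comeagers}). Hence Theorem~\ref{thm:comeagerOKset} applies and yields an ultrafilter $\F$ on $\G$ that is a $2^\w$-O.K.\ set, and therefore (being $\w_1$-O.K.) a weak $P$-set by Lemma~\ref{lem:okweak}. By Lemma~\ref{lem:glasner}, the Ramsey property guarantees that this $\F$ is in fact a point $p \in \w^*$, and since $\F \sub \G$ we get $p \sub \G$. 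Because $\hat{\F} = \{p\}$ is a weak $P$-set, $p$ is a weak $P$-point. For the $\dom = \continuum$ (genuine $P$-point) assertion, I would instead apply Theorem~\ref{thm:d=c}, which under $\dom = \continuum$ produces a $P$-ultrafilter $\F$ on $\G$; again Lemma~\ref{lem:glasner} converts this into a point $p \in \w^*$ with $p \sub \G$, and the $P$-filter condition ($\hat{\F} = \{p\}$ is a $P$-set) says exactly that $p$ is a $P$-point.

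The only subtlety worth flagging is the translation between the abstract ``(weak) $P$-filter'' notion defined for filters on $\G$ and the classical ``(weak) $P$-point'' notion for points of $\w^*$. In the general setting $\hat{\F}$ can be a large closed set, but once Lemma~\ref{lem:glasner} tells us that $\F$ is an ultrafilter on all of $[\w]^\w$, its Stone dual $\hat{\F}$ collapses to the single point $p$, so ``$\hat{\F}$ is a (weak) $P$-set'' becomes literally ``$p$ is a (weak) $P$-point'' in the usual sense. I do not expect any real obstacle here; the content is entirely in the earlier construction theorems, and this proposition is essentially a dictionary-lookup corollary that records how the Ramsey property lets those constructions land inside $\w^*$ itself rather than merely inside the closed subset $\hat{\F}$. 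The one thing to verify is that the hypotheses of Theorems~\ref{thm:comeagerOKset} and~\ref{thm:d=c} are genuinely met, namely that a $G_\delta$ semifilter is co-meager (for the ZFC case) and that it is an honest $G_\delta$ semifilter (for the $\dom = \continuum$ case) --- both of which are immediate.
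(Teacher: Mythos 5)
Your proposal is correct and follows essentially the same route as the paper's own proof: apply Theorem~\ref{thm:d=c} (under $\dom = \continuum$) or Theorem~\ref{thm:comeagerOKset} (in ZFC, via Corollary~\ref{cor:comeagers} and Lemma~\ref{lem:okweak}) to get a (weak) $P$-ultrafilter on $\G$, then use Lemma~\ref{lem:glasner} to conclude it is a genuine point of $\w^*$ contained in $\G$. Your extra remark that $\hat{\F}$ collapses to a singleton, so that the (weak) $P$-filter notion becomes literally the classical (weak) $P$-point notion, is a correct clarification the paper leaves implicit.
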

\begin{proof}
By Theorem~\ref{thm:d=c} (respectively, Theorem~\ref{thm:comeagerOKset}), there is an ultrafilter on $\G$ that is also a (weak) $P$-filter. By Lemma~\ref{lem:glasner}, this filter is a free ultrafilter in the usual sense.
\end{proof}

Examples of $G_\delta$ semifilters with the Ramsey property include:
\begin{itemize}
\item the infinite sets, in which case Proposition~\ref{prop:coolppoints} reduces to the classical theorem of Ketonen (resp., Kunen).
\item the sets containing arbitrarily long arithmetic sequences.
\item sets $A$ such that $\sum_{n \in A \setminus \{0\}}\frac{1}{n}$ diverges.
\item for a fixed enumeration $\set{e_n}{n \in \w}$ of the edges of $K_\w$ (the complete graph on $\w$), the sets $A \sub \w$ such that $\set{e_n}{n \in A}$ contains a copy of $K_n$ for every $n$.
\end{itemize}

\begin{remark}\label{rem:vdW}
For certain $\G$, the $P$-points exhibited in Proposition~\ref{prop:coolppoints} cannot be selective. For example, consider the partition of $\N$ into the intervals $I_0 = [0,0], I_1 = [1,2], I_2 = [3,6], I_3 = [7,14], I_4 = [15,30], \dots$ (in general, $I_n = [a_n,2a_n]$, where $a_n = \min (\w \setminus \bigcup_{m < n}I_m)$). If $A$ is any set containing a single point of each $I_n$, then $A$ does not contain arbitrarily long arithmetic sequences (it contains none of length $3$).
\end{remark}

Contrary to the main theme of this paper, the following corollary of Proposition~\ref{prop:coolppoints} demonstrates a nontrivial way in which $G_\delta$ semifilters do not necessarily behave like $[\w]^{\w}$.

\begin{proposition}
It is consistent to have $P$-ultrafilters on some $G_\delta$ semifilters but not others.
\end{proposition}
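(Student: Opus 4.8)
The plan is to exhibit a single model of set theory in which some $G_\delta$ semifilter carries a $P$-ultrafilter while some other $G_\delta$ semifilter does not; since the largest $G_\delta$ semifilter $[\w]^\w$ always carries $P$-ultrafilters under mild hypotheses, the interesting direction is producing a $G_\delta$ semifilter on which no $P$-ultrafilter exists, in a model where $P$-points on $[\w]^\w$ nonetheless survive. The natural strategy is to contrast the semifilter $[\w]^\w$ against a $G_\delta$ semifilter $\G$ with the Ramsey property for which the ambient combinatorics blocks $P$-points.

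First I would recall that, by Proposition~\ref{prop:coolppoints}, if $\G$ is a $G_\delta$ semifilter with the Ramsey property then any $P$-ultrafilter on $\G$ is, by Lemma~\ref{lem:glasner}, a genuine $P$-point of $\w^*$ all of whose members lie in $\G$. So to make $\G$ fail to carry a $P$-ultrafilter it suffices to arrange that no $P$-point of $\w^*$ consists entirely of sets in $\G$. I would take $\G$ to be one of the Ramsey $G_\delta$ semifilters already listed after Proposition~\ref{prop:coolppoints} — for instance the semifilter of sets containing arbitrarily long arithmetic progressions (the van der Waerden semifilter), which by Remark~\ref{rem:vdW} already cannot be concentrated on by any \emph{selective} ultrafilter. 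The point of that remark is that a set meeting each interval $I_n$ in a single point lies in $\G$'s complement while being a member of every selective ultrafilter that diagonalizes the partition, so selective ultrafilters are automatically ruled out; the task is to strengthen this to rule out all $P$-points in the chosen model.

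The key step, then, is to choose a model where $P$-points exist (so that $[\w]^\w$ carries $P$-ultrafilters) but where every $P$-point is, in effect, forced to contain a set that is \emph{not} in $\G$. I would look for a model in which the only $P$-points are selective (or more generally are rare/Ramsey enough that the obstruction of Remark~\ref{rem:vdW} applies), so that the argument of that remark upgrades from ``not selective'' to ``does not concentrate on $\G$.'' A standard candidate is a model in which $P$-points exist but all of them are selective — for example certain iterated or product forcing models, or a model obtained under a hypothesis such as $\dom=\aleph_1 < \continuum$ combined with a construction guaranteeing selectivity of surviving $P$-points. In such a model, $[\w]^\w$ carries a $P$-ultrafilter (namely a selective one), yet by Remark~\ref{rem:vdW} no selective ultrafilter can consist of sets in the van der Waerden semifilter $\G$, and if these are the only $P$-points then $\G$ carries no $P$-ultrafilter at all; combining these gives the claimed consistency.

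The hard part will be pinning down a ground model in which every $P$-point is selective while $P$-points still exist, and verifying that the van der Waerden partition obstruction really does exclude \emph{all} the surviving $P$-points rather than merely the selective ones — in other words, matching the class of ``allowed'' $P$-points in the model precisely against the class excluded by $\G$. I expect the cleanest route is to isolate a combinatorial property $Q$ (weaker than selectivity) that (a) is enjoyed by every $P$-point in the target model and (b) is incompatible with concentrating on $\G$ by the interval argument of Remark~\ref{rem:vdW}; the consistency of ``$P$-points exist and all have property $Q$'' is then the technical heart, to be secured by a suitable forcing or by citing a known model. Once $Q$ is in hand, both halves of the statement follow immediately: $[\w]^\w$ admits a $P$-ultrafilter, and $\G$ does not.
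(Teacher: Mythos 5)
Your strategy is exactly the paper's: contrast $[\w]^\w$ with the van der Waerden semifilter $\G$ of sets containing arbitrarily long arithmetic progressions, use Lemma~\ref{lem:glasner} (via the Ramsey property of $\G$, which holds by van der Waerden's theorem) to turn any hypothetical $P$-ultrafilter on $\G$ into an honest $P$-point of $\w^*$ concentrating on $\G$, and use the interval-partition obstruction of Remark~\ref{rem:vdW} to exclude selective ultrafilters from concentrating on $\G$. All of that is correct and matches the paper step for step.

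The genuine gap is the one you yourself flag: you never establish, or cite, the consistency of ``$P$-points exist and every $P$-point is selective,'' and without it the argument does not close. This is not a routine forcing exercise to be left as ``the technical heart, to be secured by a suitable forcing'' --- it is a deep theorem of Shelah, proved in Section XVIII.4 of \emph{Proper and Improper Forcing} \cite{Shl}: it is consistent that there is a selective ultrafilter while no $P$-point fails to be selective. The paper's entire proof consists of citing this result and combining it with Remark~\ref{rem:vdW}; in Shelah's model the selective ultrafilter is a $P$-ultrafilter on $[\w]^\w$, while any $P$-ultrafilter on $\G$ would be a $P$-point (hence selective, by the model's property) consisting of sets in $\G$, contradicting Remark~\ref{rem:vdW}. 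Note also that your proposed detour through an auxiliary combinatorial property $Q$ weaker than selectivity is unnecessary once Shelah's theorem is invoked: since \emph{all} $P$-points in that model are selective, Remark~\ref{rem:vdW} already excludes every $P$-point from concentrating on $\G$, and no upgrading argument is needed. Your alternative suggestion of a hypothesis like $\dom = \aleph_1 < \continuum$ plus an ad hoc construction does not obviously yield a model in which all $P$-points are selective, so as written it cannot substitute for the citation.
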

\begin{proof}
In Section XVIII.4 of \cite{Shl}, Shelah proves that it is consistent to have a selective ultrafilter and no $P$-points that fail to be selective. Combining this with Remark~\ref{rem:vdW}, it is consistent to have a $P$-ultrafilter on $[\w]^\w$, but none on the semifilter of sets containing arbitrarily long arithmetic sequences.
\end{proof}

Proposition~\ref{prop:coolppoints} suggests a strengthening of the Ramsey property. Recall that $p \in \w^*$ is a $P$-point if and only if whenever there is a sequence $A_n$ of sets not in $p$, then there is a set in $p$ that meets each of them finitely. Let us say that a semifilter $\S$ is \emph{countably Ramsey} if, for every $A \in \S$, if $A = \bigcup_{n \in \w}A_n$, then either
\begin{enumerate}
\item $A_n \in \S$ for some $n$; or
\item there is a set $B \sub A$ such that $B \in \S$ and, for each $n$, $B \cap A_n$ is finite.
\end{enumerate}

Proposition~\ref{prop:coolppoints} suggests that $G_\delta$ semifilters might possess this property. We prove this next. Notice that the proof does not require any $P$-points and is carried out in ZFC; in fact, it seems to be merely another combinatorial expression of the fact that $\pseudo > \aleph_0$.

\begin{proposition}\label{prop:countableindivisibility}
Suppose $\G$ is a $G_\delta$ semifilter with the Ramsey property. Then $\G$ is countably Ramsey. 
\end{proposition}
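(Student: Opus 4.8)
The plan is to reduce the whole statement to the single fact that $\pseudo_\G = \pseudo$ is uncountable, which is exactly what Theorem~\ref{thm:M&S} provides. So I would fix $A \in \G$ together with a cover $A = \bigcup_{n \in \w} A_n$, and assume that clause (1) fails, i.e.\ $A_n \notin \G$ for every $n$; the task is then to manufacture the set $B$ demanded by clause (2).

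First I would disjointify the cover, replacing each $A_n$ by $A_n \setminus \bigcup_{m<n} A_m$. This leaves the union equal to $A$, and it does no harm to the standing assumption: because $\G$ is closed upward in $\sub^*$, any subset of a set not in $\G$ is again not in $\G$, so the smaller pieces are still outside $\G$. Hence I may assume from now on that the $A_n$ are pairwise disjoint.

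The crucial observation is that the tails $C_n = \bigcup_{k > n} A_k = A \setminus \bigcup_{k \leq n} A_k$ all belong to $\G$. Indeed, $A = A_0 \cup \dots \cup A_n \cup C_n$ exhibits $A \in \G$ as a finite union, so the Ramsey property puts one of the pieces into $\G$; since $A_0, \dots, A_n \notin \G$, it must be $C_n$. The sequence $\langle C_n : n \in \w \rangle$ is decreasing, hence a countable centered family in $\G$. Now I invoke $\pseudo_\G = \pseudo > \aleph_0$: a countable centered family has size strictly below $\pseudo_\G$ and so is bounded in $\G$. Choosing a lower bound $B_0 \in \G$, so that $B_0 \sub^* C_n$ for all $n$, I set $B = B_0 \cap A$. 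Since $B_0 \sub^* C_0 \sub A$ we have $B =^* B_0$, whence $B \in \G$ and $B \sub A$; and for each fixed $k$ disjointness gives $A_k \cap C_k = \0$, so $B \cap A_k \sub B_0 \setminus C_k$ is finite. Thus $B$ meets every piece finitely and clause (2) holds.

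I do not expect any serious obstacle once Theorem~\ref{thm:M&S} is available. The only points that need genuine care are the bookkeeping showing that disjointifying preserves the failure of clause (1) (which rests squarely on upward closure under $\sub^*$) and the translation of ``$B_0$ is a lower bound of the tails'' into ``$B$ meets each piece finitely.'' The real content is simply the uncountability of $\pseudo_\G$, exactly as the surrounding remark anticipates.
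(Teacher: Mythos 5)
Your proof is correct and follows essentially the same route as the paper's: apply the Ramsey property to the finite decompositions $A = A_0 \cup \dots \cup A_n \cup C_n$ to see that the tails $C_n$ form a chain in $\G$, then invoke $\pseudo_\G = \pseudo > \aleph_0$ from Theorem~\ref{thm:M&S} to bound that chain. Your extra steps (disjointifying, which is unnecessary since $C_n = A \setminus \bigcup_{k \leq n} A_k$ is automatically disjoint from $A_n$, and replacing the lower bound $B_0$ by $B = B_0 \cap A$ to get $B \sub A$ literally) only make explicit details the paper's proof leaves to the reader.
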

\begin{proof}
Let $A \in \G$ and $A = \bigcup_{n \in \w}A_n$. Suppose that none of the $A_n$ are in $\G$. Putting $B_n = A \setminus \bigcup_{m \leq n}A_{k_m}$, the Ramsey property guarantees that $B_n \in \G$. Furthermore, the $B_n$ form a chain in $\G$. By Theorem~\ref{thm:M&S}, $\pseudo_\G = \pseudo > \card{B}$, so that $\set{B_n}{n \in \w}$ has a lower bound $B$ in $\G$. This $B$ must satisfy $(2)$ from the definition of countably Ramsey.
\end{proof}

Versions of this property have already been defined and studied, e.g., rainbow Ramsey properties (\cite{FMO}) or the canonical van der Waerden theorem (\cite{E&G}). In contrast with these two variants, however, the chief interest of the countably Ramsey property is that it makes sense to ask of any semifilter whether it is countably Ramsey.

\begin{question}
Which other semifilters are countably Ramsey?
\end{question}

Clearly every countably Ramsey semifilter has the Ramsey property, but the converse implication fails. The easiest way to see this is to consider an ultrafilter that is not a $P$-point. We conclude with a more interesting example: the IP sets. Recall that a set is \emph{IP} iff it satisfies the conclusion of Hindman's finite-sums Theorem. Specifically, $A \sub \w$ is an IP set iff there is some $\set{a_i}{i \in \w} \sub A$ such that for any finite $F \sub \w$ there is some $j$ with $\sum_{i \in F}a_i = a_j$.

\begin{proposition}\label{prop:notIPorcentral}
The semifilter of IP sets is not countably Ramsey.
\end{proposition}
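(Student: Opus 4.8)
The plan is to exhibit a single IP set $A$ together with a countable partition $A=\bigcup_{k\in\w}A_k$ that defeats both clauses in the definition of countably Ramsey at once: no $A_k$ will be IP, yet every IP subset of $A$ will meet some $A_k$ infinitely. The partition I have in mind sorts the positive integers by their $2$-adic valuation. Concretely, take $A=\w\setminus\{0\}$ (which is IP, being $FS(\langle 2^n:n<\w\rangle)$, the set of all finite sums of distinct powers of $2$), and for each $k$ put $A_k=\{m\in A: v_2(m)=k\}$, where $v_2(m)$ denotes the exponent of the largest power of $2$ dividing $m$. These classes are disjoint and cover $A$.

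First I would dispatch clause $(1)$. If $a,a'$ are distinct elements of $A_k$, then $a=2^k u$ and $a'=2^k u'$ with $u,u'$ odd, so $a+a'=2^k(u+u')$ has $v_2(a+a')\geq k+1$ and hence lies outside $A_k$. Thus $A_k$ contains no two elements whose sum is again in $A_k$, so it cannot contain any set of the form $FS(\langle x_n\rangle)$ and is therefore not IP.

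The substance of the argument is clause $(2)$: showing that no IP set $B\sub A$ meets every $A_k$ finitely. Since $B$ is IP it contains some $FS(\langle x_n:n<\w\rangle)$ with the $x_n$ distinct, so it suffices to produce a single $k$ with $B\cap A_k$ infinite. Here I would split on the behaviour of the valuations $v_2(x_n)$. If some value $k$ is attained by infinitely many $x_n$, those $x_n$ already lie in $B\cap A_k$ and we are done. Otherwise each valuation occurs only finitely often, so after thinning to a subsequence I may assume $v_2(x_{n_0})<v_2(x_{n_1})<\cdots$ is strictly increasing, with least value $c_0=v_2(x_{n_0})$. The key computation is that for every $m\geq 1$ the element $x_{n_0}+x_{n_m}$ has $v_2$ exactly $c_0$: writing $x_{n_0}=2^{c_0}\cdot(\text{odd})$ and noting $2^{c_0+1}\mid x_{n_m}$, the sum is $2^{c_0}\cdot(\text{odd}+\text{even})=2^{c_0}\cdot(\text{odd})$. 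These sums lie in $FS(\langle x_n\rangle)\sub B$, are pairwise distinct (the $x_{n_m}$ are distinct), and all belong to $A_{c_0}$; hence $B\cap A_{c_0}$ is infinite.

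The main obstacle—really the only delicate point—is this last valuation computation together with the bookkeeping guaranteeing infinitely many \emph{distinct} elements in the chosen class; restricting attention to the pairwise sums $x_{n_0}+x_{n_m}$ sidesteps the worry that different finite index sets might produce coincidental sums. With clause $(1)$ failing (no $A_k$ is IP) and clause $(2)$ failing (every IP $B\sub A$ meets some $A_k$ infinitely), the single partition $A=\bigcup_{k}A_k$ witnesses that the semifilter of IP sets is not countably Ramsey.
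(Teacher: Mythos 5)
Your proposal is correct and takes essentially the same approach as the paper: the paper uses exactly the same partition by $2$-adic valuation, $A_n = \set{2^n(2k+1)}{k \in \w}$, and the same key computation that adding a higher-valuation element to a valuation-$k$ element lands back in $A_n$. The only cosmetic difference is that the paper argues by contradiction using its sum-closed witness enumeration (finding $b_\ell + b_k = b_j$ beyond the tail), whereas you directly exhibit infinitely many pairwise sums $x_{n_0} + x_{n_m}$ in $B \cap A_{c_0}$ via the standard $FS$ formulation --- a harmless (in fact slightly more robust) variant.
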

\begin{proof}
For each $n \in \w$, let $A_n = \set{2^n(2k+1)}{k \in \w}$. $\bigcup_{n \in \w}A_n = \w \setminus \{0\}$, and it is easily checked that none of the $A_n$ is an IP set.

Suppose $B \sub \w$, $B$ is an IP set, and $B \cap A_n$ is finite for every $n$. Let $B_0 = \set{b_n}{n \in \w}$ witness the fact that $B$ is an IP set. Without loss of generality, we may assume that our enumeration of $B_0$ is increasing.

Fix $n$ such that $B_0 \cap A_n \neq \0$, and let $b_k \in B_0 \cap A_n$. Since $B_0 \cap A_m$ is finite for all $m \leq n$, there is some $L$ such that if $\ell \geq L$ then $b_\ell \notin \bigcup_{m \leq n}A_m$. In other words, if $\ell \geq L$, $b_\ell$ is even modulo $2^n$. But $b_k$ is odd modulo $2^n$, so $b_\ell + b_k \in A_n$. Since $b_\ell + b_k = b_j$ for some $j > \ell \geq L$, this contradicts our choice of $L$ and shows that $B$ cannot be an IP set.

Thus we have $\w = \{0\} \cup \bigcup_{n \in \w}A_n$, none of the $A_n$ is an IP set, and there is no IP set meeting every $A_n$ finitely.
\end{proof}

The following corollary shows that Proposition~\ref{prop:countableindivisibility} does not extend to co-meager semifilters:

\begin{corollary}\label{cor:IPsets}
There is a co-meager semifilter with the Ramsey property that fails to be countably Ramsey.
\end{corollary}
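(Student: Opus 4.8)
The plan is to assemble the statement directly from results already established in the paper, treating the semifilter of IP sets as the witness. Let $\S$ denote the semifilter of IP sets. Three things need to be verified: that $\S$ is a genuine semifilter, that $\S$ is co-meager, and that $\S$ has the Ramsey property. The failure of the countably Ramsey property will then be quoted verbatim from Proposition~\ref{prop:notIPorcentral}, so the corollary reduces to checking the three positive properties of $\S$.

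First I would confirm that $\S$ is a semifilter, i.e.\ that it is a nonempty proper subset of $\mathcal P(\w)$ closed upwards under $\sub^*$. Upward closure is the only nontrivial point: if $A$ is an IP set witnessed by $\set{a_i}{i \in \w}$ and $A \sub^* C$, then all but finitely many $a_i$ lie in $C$, and the finite-sums structure survives on a cofinite subset of the witnessing sequence, so $C$ is again IP. The Ramsey property of $\S$ is precisely the content of Hindman's finite-sums theorem in its partition-regular form: if an IP set $A$ is split into finitely many pieces $A = \bigcup_{i \le n} A_i$, then Hindman's theorem (applied inside the finite-sums sets generated by the witnessing sequence for $A$) guarantees one piece $A_i$ is itself IP. I would cite Hindman's theorem for this.

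For co-meagerness, the cleanest route is Proposition~\ref{prop:talagrand}: it suffices to exhibit a finite-to-one $f : \w \to \w$ such that $f^{-1}[X] \in \S$ for every infinite $X \sub \w$. The natural choice partitions $\w$ into consecutive blocks whose preimages, for any infinite $X$, generate a sequence rich enough to contain all finite sums; for instance, mapping the $n$-th block to $n$ so that an infinite $X$ pulls back to a union of infinitely many blocks, which can be arranged to contain a sequence closed under finite sums (e.g.\ by choosing block endpoints to grow fast enough that partial sums land inside later blocks). One verifies that $f^{-1}[X]$ contains an IP witness whenever $X$ is infinite. Alternatively, one may observe directly that $\S$ contains a translate-closed structure forcing co-meagerness via the same criterion; either way the work is in arranging the blocks.

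The main obstacle is the co-meagerness verification: constructing the finite-to-one $f$ and checking that $f^{-1}[X]$ is IP for \emph{every} infinite $X$ requires care, since one must ensure the pulled-back set genuinely contains a finite-sums set, not merely an infinite set. Once $f$ is in hand the conclusion is immediate. Combining these three facts with Proposition~\ref{prop:notIPorcentral}, $\S$ is a co-meager semifilter with the Ramsey property that is not countably Ramsey, which is exactly the assertion of the corollary. I would also remark that this simultaneously supplies the co-meagerness claim invoked earlier in the proof that some co-meager semifilter admits no $P$-ultrafilter.
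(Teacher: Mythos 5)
Your proposal is correct, and it reaches the corollary by a genuinely different route on the one nontrivial step, the co-meagerness of the semifilter $\S$ of IP sets. The paper argues softly: $\S$ is analytic, hence has the Baire property, hence by Proposition~\ref{prop:bairemeager} is meager or co-meager; and it cannot be meager because an ultrafilter on $\S$ exists by Zorn's lemma and is, by Lemma~\ref{lem:glasner} (whose hypothesis is exactly the Ramsey property you get from Hindman's theorem), a free ultrafilter contained in $\S$ --- and no ultrafilter is meager. You instead construct an explicit Rudin--Blass reduction and invoke Proposition~\ref{prop:talagrand}. Your construction does succeed, but the invariant you state --- ``partial sums land inside later blocks'' --- is not the right one, since later blocks need not be indexed by elements of $X$; what you need is that each sum stays in the block of its \emph{largest} summand. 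Concretely, take $B_n = [2^n, 2^{n+1})$, let $f$ map $B_n$ to $n$, and given infinite $X$ use the generators $\set{2^n}{n \in X}$: any sum $\sum_{n \in F} 2^n$ with finite $F \sub X$ lies in $B_{\max F}$ and $\max F \in X$, so the entire finite-sums set sits inside $f^{-1}[X]$, which is therefore IP. With that fix your argument is complete, and it buys something the paper's does not: it is effective and self-contained, avoiding the appeals to analyticity, to the Baire property of analytic sets, and to the non-meagerness of ultrafilters, at the cost of the hands-on verification that the paper's one-line soft argument sidesteps. Two small remarks: for upward $\sub^*$-closure, note that a cofinite subset of a finite-sums witness is not itself closed under finite sums --- pass instead to the finite sums of a tail of generators, all of which exceed the finitely many excluded points; and your closing observation is apt, since either proof supplies the co-meagerness fact quoted earlier in the proposition that some co-meager semifilter carries no $P$-ultrafilter. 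The remaining ingredients coincide in both proofs: the Ramsey property is Hindman's theorem (cited by you, used implicitly by the paper through Lemma~\ref{lem:glasner}), and the failure of countable Ramseyness is Proposition~\ref{prop:notIPorcentral}.
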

\begin{proof}
Let $\S$ denote the semifilter $\S$ of IP sets. It is easily seen (e.g., by the methods in Appendix C of \cite{Kec}) that $\S$ is analytic, and hence has the property of Baire (see, e.g., Exercise 8.50 in \cite{Kec}). By Proposition~\ref{prop:bairemeager}, $\S$ is either meager or co-meager. But $\S$ cannot be meager since, by Lemma~\ref{lem:glasner}, it contains an ultrafilter (and no ultrafilter is meager; see, e.g., Theorem 29.5 in \cite{Kec}).
\end{proof}

\section*{Acknowledgement}

The second author would like to thank the members of the Prague Set Theory seminar for fruitful discussions of the topic. In particular, the main idea of the proof of Proposition~\ref{prop:functional} came out of these discussions. Work on the paper was partially completed while the second author was visiting the Institute for Mathematical Sciences, National University of Singapore in 2015.

\end{document}